\DeclareMathOperator{\sgn}{\mathrm{sgn}}
\begin{document}
 \bibliographystyle{plain}

 \newtheorem{theorem}{Theorem}
 \newtheorem{lemma}[theorem]{Lemma}
 \newtheorem{proposition}[theorem]{Proposition}
 \newtheorem{corollary}[theorem]{Corollary}
 \theoremstyle{definition}
 \newtheorem{definition}[theorem]{Definition}
 \newtheorem{example}[theorem]{Example}
 \theoremstyle{remark}
 \newtheorem{remark}[theorem]{Remark}
 \newcommand{\mc}{\mathcal}
 \newcommand{\A}{\mc{A}}
 \newcommand{\B}{\mc{B}}
 \newcommand{\cc}{\mc{C}}
 \newcommand{\D}{\mc{D}}
 \newcommand{\E}{\mc{E}}
 \newcommand{\F}{\mc{F}}
 \newcommand{\G}{\mc{G}}
 \newcommand{\sH}{\mc{H}}
 \newcommand{\I}{\mc{I}}
 \newcommand{\J}{\mc{J}}
 \newcommand{\nn}{\mc{N}}
 \newcommand{\rr}{\mc{R}}
 \newcommand{\sS}{\mc{S}}
 \newcommand{\U}{\mc{U}}
 \newcommand{\X}{\mc{X}}
 \newcommand{\Y}{\mc{Y}}
 \newcommand{\C}{\mathbb{C}}
 \newcommand{\R}{\mathbb{R}}
 \newcommand{\N}{\mathbb{N}}
 \newcommand{\Q}{\mathbb{Q}}
 \newcommand{\Z}{\mathbb{Z}}
 \newcommand{\csch}{\mathrm{csch}}
 \newcommand{\tF}{\widehat{F}}
 \newcommand{\tG}{\widehat{G}}
 \newcommand{\tH}{\widehat{H}}
 \newcommand{\tf}{\widehat{f}}
 \newcommand{\ug}{\widehat{g}}
 \newcommand{\wg}{\widetilde{g}}
 \newcommand{\uh}{\widehat{h}}
 \newcommand{\wh}{\widetilde{h}}
 \newcommand{\wl}{\widetilde{l}}
 \newcommand{\tk}{\widehat{k}}
 \newcommand{\tK}{\widehat{K}}
 \newcommand{\tl}{\widehat{l}}
 \newcommand{\tL}{\widehat{L}}
 \newcommand{\tm}{\widehat{m}}
 \newcommand{\tM}{\widehat{M}}
 \newcommand{\tp}{\widehat{\varphi}}
 \newcommand{\tq}{\widehat{q}}
 \newcommand{\tT}{\widehat{T}}
 \newcommand{\tU}{\widehat{U}}
 \newcommand{\tu}{\widehat{u}}
 \newcommand{\tV}{\widehat{V}}
 \newcommand{\tv}{\widehat{v}}
 \newcommand{\tW}{\widehat{W}}
 \newcommand{\ba}{\boldsymbol{a}}
 \newcommand{\bal}{\boldsymbol{\alpha}}
 \newcommand{\bx}{\boldsymbol{x}}
 \newcommand{\p}{\varphi}
 \newcommand{\f}{\frac52}
 \newcommand{\g}{\frac32}
 \newcommand{\h}{\frac12}
 \newcommand{\hh}{\tfrac12}
 \newcommand{\ds}{\text{\rm d}s}
 \newcommand{\dt}{\text{\rm d}t}
 \newcommand{\du}{\text{\rm d}u}
 \newcommand{\dv}{\text{\rm d}v}
 \newcommand{\dw}{\text{\rm d}w}
  \newcommand{\dz}{\text{\rm d}z}
 \newcommand{\dx}{\text{\rm d}x}
 \newcommand{\dy}{\text{\rm d}y}
 \newcommand{\dl}{\text{\rm d}\lambda}
 \newcommand{\dmu}{\text{\rm d}\mu(\lambda)}
 \newcommand{\dnu}{\text{\rm d}\nu(\lambda)}
  \newcommand{\dnuN}{\text{\rm d}\nu_N(\lambda)}
\newcommand{\dnus}{\text{\rm d}\nu_{\sigma}(\lambda)}
 \newcommand{\dlnu}{\text{\rm d}\nu_l(\lambda)}
 \newcommand{\dnnu}{\text{\rm d}\nu_n(\lambda)}
\newcommand{\sech}{\text{\rm sech}}
\newcommand{\CC}{\mathbb{C}}
\newcommand{\NN}{\mathbb{N}}
\newcommand{\RR}{\mathbb{R}}
\newcommand{\ZZ}{\mathbb{Z}}
\newcommand{\thp}{\theta^+}
\newcommand{\thpn}{\theta^+_N}
\newcommand{\vthp}{\vartheta^+}
\newcommand{\vthpn}{\vartheta^+_N}
\newcommand{\ft}[1]{\widehat{#1}}
\newcommand{\support}[1]{\mathrm{supp}(#1)}
\newcommand{\gplus}{G^+_\lambda}
\newcommand{\ph}{\mathcal{H}}
\newcommand{\godd}{G^o_\lambda}
\newcommand{\mplus}{M_\lambda^+}
\newcommand{\lplus}{L_\lambda^+}
\newcommand{\modd}{M_\lambda^o}
\newcommand{\lodd}{L_\lambda^o}
\newcommand{\sgp}{x_+^0}
\newcommand{\Tl}{T_\lambda}
\newcommand{\Tlc}{T_{\lambda,c}}
\newcommand{\El}{{E_\lambda}}

 \newcommand{\T}{\mc{T}}
 \newcommand{\M}{\mc{M}}
 \renewcommand{\L}{\mc{L}}
 \newcommand{\K}{\mc{K}}
 \renewcommand{\H}{\mc{H}}

\newcommand{\TT}{\mathfrak{T}}
\newcommand{\MM}{\mathfrak{M}}
\newcommand{\LL}{\mathfrak{L}}
\newcommand{\KK}{\mathfrak{K}}
\newcommand{\GG}{\mathfrak{G}}
\newcommand{\HH}{\mathfrak{H}}

\newcommand{\newr}[1]{\textcolor{red}{#1}}
\newcommand{\new}[1]{\textcolor{black}{#1}}

 \def\today{\ifcase\month\or
  January\or February\or March\or April\or May\or June\or
  July\or August\or September\or October\or November\or December\fi
  \space\number\day, \number\year}

\title[Approximations to the truncated functions]{Entire approximations for a class of truncated and odd functions}
\author[Carneiro and Littmann]{Emanuel Carneiro and  Friedrich Littmann}

\date{\today}
\subjclass[2000]{Primary 41A30, 41A52, 42A05. Secondary 41A05, 41A44}
\keywords{Truncated functions, truncated logarithm, exponential type, extremal functions, majorants, minorants.}

\thanks{E. C. acknowledges support from CNPq-Brazil grants $473152/2011-8$ and $302809/2011-2$, and FAPERJ grant E-26/103.010/2012. Part of this work was completed during a visit of F.L. to IMPA, for which he gratefully acknowledges the support.}

\address{IMPA - Instituto de Matem\'{a}tica Pura e Aplicada, Estrada Dona Castorina, 110, Rio de Janeiro, Brazil 22460-320.}
\email{carneiro@impa.br}
\address{Department of mathematics, North Dakota State University, Fargo, ND 58105-5075.}
\email{friedrich.littmann@ndsu.edu}

\begin{abstract} We solve the problem of finding optimal entire approximations  of prescribed exponential type (unrestricted, majorant and minorant) for a class of truncated and odd functions with a shifted exponential subordination, minimizing the $L^1(\R)$-error. The class considered here includes new examples such as the truncated logarithm and truncated shifted power functions. This paper is the counterpart of the works \cite{CV2} and \cite{CV3} where the analogous problem for even functions was treated.

\end{abstract}

\maketitle

\numberwithin{equation}{section}

\section{Introduction}

We address in this paper the extremal problems of finding optimal entire functions of prescribed exponential type that approximate, majorize or minorize a given function $f:\R \to \R$, minimizing the $L^1(\R)$-error. Recall that an entire function $K:\C \to \C$ has exponential type $\sigma \geq 0$ if for every $\epsilon >0$ there exists $C_{\epsilon} >0$ such that 
\begin{equation*}
|K(z)| \leq C_{\epsilon} \,e^{(\sigma + \epsilon) |z|} 
\end{equation*}
for all $z \in \C$.

\smallskip

The best approximation problem (also referred to as two-sided approximation) is a classical problem in approximation theory and harmonic analysis, and dates back to the works of Krein \cite{K} and Sz.-Nagy \cite{Na}. The extremal majorant/minorant problem (also referred to as one-sided approximations) was independently introduced by Beurling for the function $\sgn(x)$ in connection with bounds for almost periodic functions (see \cite{V}).  With the observation that $\chi_{[a,b]}(x) = \tfrac{1}{2}\{\sgn(x-a) + \sgn(b-x)\}$, Selberg constructed majorants and minorants for characteristic functions of intervals, a simple yet very useful tool for number theoretic applications, cf.\ \cite{BMV, Ga, GG, S2, V}. For other works related to this theory of extremal functions of exponential type and its applications we refer to \cite{Gan, HZ, HV, L1, L3, L4} and the references therein.

\smallskip

The idea of extending the solution of the extremal problem from a base case with a free paramater to a whole class of functions via an integration argument is due to Graham and Vaaler \cite{GV}. In this work they solve the extremal problem for the exponential $e^{-\lambda |x|}$ (also for its truncated and odd versions), and are able to integrate the parameter $\lambda >0$ against nonnegative Borel measures $\dnu$ that satisfy certain integrability conditions. In the case of even functions this was later refined in \cite{CV2, CV3} when Carneiro and Vaaler considered a shifted exponential $e^{-\lambda |x|} - e^{-\lambda}$ to be able to integrate it against the optimal class of measures  $\dnu$ and obtain the solution for a class of even functions that includes $\log |x|$, for instance. The most general framework for solving this problem in the case of even functions was later obtained by Carneiro, Littmann and Vaaler in \cite{CLV} with the solution for the Gaussian $e^{-\lambda \pi x^2}$, and the extension to a larger class of even functions via integration and the use of distribution theory. The special family of even functions $\log\big[(x^2 + a^2)/(x^2 + b^2)\big]$, contemplated by this method, was later used to improve the known bounds for the Riemann zeta function on the critical strip under the Riemann hypothesis \cite{CC, CS}. 

\smallskip 

In the case of truncated and odd functions the picture is different. Recently, the authors \cite{CL} have solved the problem for the truncated and odd Gaussians, and extended the construction to a general class of truncated and odd functions. The special odd function $\arctan(1/x) - x/(1 + x^2)$, that falls under the scope of this framework, was later applied to obtain improved bounds for the argument of the Riemann zeta function on the critical line under the Riemann hypothesis \cite{CCM}. However, the Gaussian subordination is not as powerful in the truncated and odd setting as it is for the even setting (the reason for this is the very fast decay of the Fourier transform of the even Gaussian $e^{-\lambda \pi x^2}$ as $\lambda \to 0$), and examples like the truncated logarithm are not reached by this method.

\smallskip

The purpose of this paper is then to construct an analogous theory as in \cite{CV2, CV3} for the case of truncated and odd functions, exploring the exponential subordination. A major difficulty in accomplishing this task is dealing with the appropriate shift, which is not a constant anymore but a step function. The class of functions that we achieve does indeed contain previously inaccessible functions such as the truncated logarithm and truncated shifted power functions, and this should close the circle with the works \cite{CL, CLV, CV2, CV3} in producing the most general framework for even, odd and truncated functions with exponential or Gaussian subordination.

\section{Results}

\subsection{The base case} To describe the functions for which we solve these extremal problems we let $\lambda >0$, $c\in\R$ and consider  $\Tlc:\R \to \R$ given by
\begin{equation*}
\Tlc(x) = \left\{
\begin{array}{rl}
e^{-\lambda x} - c & \textrm{if} \ \ x>0,\\
\frac{1}{2}(1 - c) & \textrm{if} \ \ x=0,\\
0 & \textrm{if} \ \ x<0.
\end{array}
\right.
\end{equation*}
We define the entire functions $K_{\lambda,c}$, $L_{\lambda,c}$ and $M_{\lambda,c}$ of exponential type $2\pi$ by
\begin{align}\label{defKLM}
\begin{split}
K_{\lambda,c}(z)&=\frac{\sin\pi z}{\pi} \left\{ \sum_{n=1}^\infty (-1)^{n} \left(\frac{\Tlc(n)}{(z-n)} -\frac{e^{-\lambda n}}{z}\right) - \frac{c}{2z}\right\},\\
L_{\lambda,c}(z) &= \frac{\sin^2\pi z}{\pi^2}\left\{ \sum_{n=1}^\infty \left(\frac{\Tlc(n)}{(z-n)^2} + \frac{\Tlc'(n)}{(z-n)}-\frac{\Tlc'(n)}{z}\right) - \frac{c}{z}\right\},\\
M_{\lambda,c}(z) &= \frac{\sin^2\pi z}{\pi^2} \left\{\sum_{n=1}^\infty \left(\frac{\Tlc(n)}{(z-n)^2} + \frac{\Tlc'(n)}{(z-n)}-\frac{\Tlc'(n)}{z}\right) - \frac{c}{z}\  + \frac{1-c}{z^2} \right\}.
\end{split}
\end{align}

The motivation for this particular choice of $K_{\lambda,c}$, $L_{\lambda,c}$ and $M_{\lambda,c}$ is the fact that $T_{\lambda,c}$ itself is the difference of two truncated functions, an exponential and a step function, and for each of these separately we can solve the extremal problem \cite{GV, V}. In general, the difference of two majorants is of course not the majorant of the difference. However, it turns out that in the case of $\Tlc$, for a sufficiently broad range of parameters $\lambda$ and $c$, the difference is indeed the optimal solution. We take $\lambda$ to be the independent parameter and express $c$ as a function of $\lambda$ and of the exponential type.

\begin{theorem}[Optimal two-sided approximation]\label{tr-ba-thm} 
Let $\delta >0$, $\lambda >0$ and $c\le e^{-\delta^{-1}\lambda}$. The inequality
\begin{align}\label{th1-eq1}
\sin \pi\delta x \, \big\{\Tlc(x)-K_{\delta^{-1}\lambda,c}(\delta x)\big\} \ge 0
\end{align}
holds for all real $x$.  If $K$ is an entire function of type $\pi\delta$, then
\begin{align}\label{th1-eq2}
\int_{-\infty}^\infty \big|\Tlc(x)- K(x)\big| \, \dx \ge  \frac{1}{\delta} \left( \frac{1-e^{-\delta^{-1}\lambda}}{\delta^{-1}\lambda \big(1+e^{-\delta^{-1}\lambda}\big)} - \frac{c}{2}\right),
\end{align}
with equality if and only if $K(z) = K_{\delta^{-1}\lambda,c}(\delta z)$ for all $z \in \C$.
\end{theorem}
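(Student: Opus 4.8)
By the substitution $x \mapsto \delta x$ we can reduce immediately to the case $\delta = 1$; indeed, if $K$ has type $\pi\delta$ then $z \mapsto K(\delta^{-1}z)$ has type $\pi$, and the $L^1$-error scales by $\delta$. So the plan is to prove the statement for $\delta = 1$, i.e.\ to show that for $\lambda>0$ and $c \le e^{-\lambda}$ one has $\sin\pi x\,\{\Tlc(x) - K_{\lambda,c}(x)\} \ge 0$ for all real $x$, that $\int_{\R}|\Tlc(x) - K(x)|\,\dx \ge (1-e^{-\lambda})/(\lambda(1+e^{-\lambda})) - c/2$ for every entire $K$ of type $\pi$, and that equality forces $K = K_{\lambda,c}$. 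The first observation is that $K_{\lambda,c}$ is the interpolation of $\Tlc$ at the integers: from the partial fraction expansion of $\sin\pi z$ one checks directly that $K_{\lambda,c}(n) = \Tlc(n)$ for all $n \in \Z$ (the subtracted terms $e^{-\lambda n}/z$ and $c/2z$ are there precisely to make the series converge while not changing the values at integers, since $\sin\pi z$ vanishes there). Hence $\Tlc(x) - K_{\lambda,c}(x)$ vanishes at every integer, and the sign condition \eqref{th1-eq1} is a statement about the sign of this difference on each interval $(n,n+1)$.

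The main work is establishing the interpolation/sign inequality \eqref{th1-eq1}. The strategy I would use is the classical one for such problems: write $\Tlc = (\text{truncated exponential}) - c\,(\text{truncated step})$, and recall from the work of Graham--Vaaler \cite{GV} (and Vaaler \cite{V} for the signum/step function) that each of the two pieces has a known best $L^1$-approximation of type $2\pi$ which interpolates it at the integers with the ``correct'' sign pattern governed by $\sin\pi x$. The entire function $K_{\lambda,c}$ in \eqref{defKLM} is manifestly the difference of these two known extremal interpolations (up to the bookkeeping constants). So the sign of $\Tlc - K_{\lambda,c}$ on $(n,n+1)$ is a superposition of the two individual sign statements, and one must check that the hypothesis $c \le e^{-\lambda}$ is exactly what forces the two contributions to line up rather than cancel. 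Concretely, for $x > 0$ one reduces to an inequality of the form $e^{-\lambda x} - c \ge K_{\lambda,c}(x)$ (or $\le$, alternating) that can be attacked by the integral representation of $K_{\lambda,c}(x) - \Tlc(x)$ as a contour integral or, more robustly, by the Poisson-type summation formula: one expresses $\sum_n (-1)^n \Tlc(n)/(x-n)$ and its relatives in closed form and compares with $\Tlc(x)$. For $x<0$ the function $\Tlc$ vanishes, and one needs $\sin\pi x \cdot(-K_{\lambda,c}(x)) \ge 0$ there, which should follow from the same representation since $K_{\lambda,c}$ extends to an entire function with a definite sign pattern on the negative axis.

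Once \eqref{th1-eq1} is in hand, the lower bound \eqref{th1-eq2} follows by the standard quadrature argument. Because $\Tlc - K_{\lambda,c}$ vanishes at the integers and changes sign exactly as $\sin\pi x$ does, for any competitor $K$ of type $\pi$ we have
\begin{align*}
\int_{\R}\big|\Tlc(x) - K(x)\big|\,\dx
&\ge \left|\int_{\R}\big(\Tlc(x) - K(x)\big)\,\sgn(\sin\pi x)\,\dx\right|\\
&= \int_{\R}\big(\Tlc(x) - K_{\lambda,c}(x)\big)\,\sgn(\sin\pi x)\,\dx,
\end{align*}
where the last equality uses that the function $\sgn(\sin\pi x)$ has an expansion as a sum of point masses against which an $L^1$ function of type $2\pi$ integrates to zero if it vanishes at the integers --- more precisely, one invokes the Poisson summation / quadrature formula $\int_{\R} F(x)\,\sgn(\sin\pi x)\,\dx = \tfrac{2}{\pi}\sum_{j}\tfrac{(-1)^j}{2j+1}\,\widehat{F}\big(\tfrac{2j+1}{2}\big)$ for $F = \Tlc - K$ of suitable type and integrability, noting that $\Tlc - K$ and $\Tlc - K_{\lambda,c}$ have the same values at the half-integers... (the cleanest route is: $K - K_{\lambda,c}$ has type $2\pi$, is integrable, and vanishes at all integers, hence $\int (K - K_{\lambda,c})\sgn(\sin\pi x)\,\dx = 0$ by the quadrature formula). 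The remaining integral $\int_{\R}(\Tlc(x) - K_{\lambda,c}(x))\sgn(\sin\pi x)\,\dx$ is then computed explicitly; splitting it at the integers and summing the resulting geometric-type series produces the closed form $(1-e^{-\lambda})/(\lambda(1+e^{-\lambda})) - c/2$. Equality in the first inequality forces $K = K_{\lambda,c}$ a.e.\ on the set where $\sin\pi x \ne 0$, hence everywhere by continuity, giving the uniqueness clause. The step I expect to be the genuine obstacle is the sign inequality \eqref{th1-eq1} for the full range $c \le e^{-\lambda}$ --- in particular verifying that the interpolation at the integers really does produce the alternating sign pattern uniformly, and pinning down that $c = e^{-\lambda}$ is the precise threshold; the quadrature computation of the constant, while tedious, is routine once the Fourier transform of $K_{\lambda,c}$ (a truncated/continuous piecewise-exponential) is recorded.
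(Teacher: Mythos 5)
Your overall plan has the right skeleton -- reduce to $\delta=1$, establish the interpolation identity $K_{\lambda,c}(n)=\Tlc(n)$, prove the sign inequality \eqref{th1-eq1}, then run the $\sgn(\sin\pi x)$ quadrature argument -- and this is the same skeleton the paper uses.  However, there are two genuine gaps.

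\textbf{The core inequality \eqref{th1-eq1} is identified but not proved, and the ``superposition'' heuristic is misleading.}  You write $\Tlc - K_{\lambda,c} = (E_\lambda - K_\lambda) - c(E_0 - K_0)$, correctly note that Graham--Vaaler and Vaaler give $\sin\pi x\{E_\lambda - K_\lambda\}\ge 0$ and $\sin\pi x\{E_0 - K_0\}\ge 0$, and say ``one must check that $c\le e^{-\lambda}$ is exactly what forces the two contributions to line up rather than cancel.''  But this is a \emph{difference of two nonnegative quantities}, so there is nothing in the superposition picture that makes the sign come out right: one genuinely has to estimate $\sin\pi x\{E_\lambda - K_\lambda\}$ from below by $e^{-\lambda}\sin\pi x\{E_0 - K_0\}$.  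The paper does this by writing both $K_\lambda-E_\lambda$ and $K_0-E_0$ as Laplace-type integrals against $b(w)=(1+e^w)^{-1}$ (Lemma with \eqref{kl-rep}), combining them into a single integral whose integrand at $c=e^{-\lambda}$ is $e^{-\lambda} g(\lambda,w)$ with $g(\lambda,w) = e^\lambda b(\lambda+w)-e^\lambda b(\lambda)-b(w)+b(0)$, and then showing $g(0,w)=0$ and $\partial_\lambda g\le 0$, hence $g\le 0$ for $\lambda,w\ge 0$ (with a mirror calculation for $w\le 0$).  The general case $c\le e^{-\lambda}$ then follows by writing $\Tlc = T_{\lambda,e^{-\lambda}} + (e^{-\lambda}-c)E_0$ and $K_{\lambda,c}=K_{\lambda,e^{-\lambda}} + (e^{-\lambda}-c)K_0$, which expresses the difference as a nonnegative combination of two nonnegative pieces.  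Nothing in your sketch substitutes for this endpoint computation; ``attacked by the integral representation ... or the Poisson-type summation formula'' is where the proof actually lives, and it is not filled in.

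\textbf{The uniqueness argument is wrong as stated.}  Equality in $\int|\Tlc-K|\ge\big|\int(\Tlc-K)\sgn(\sin\pi x)\big|$ does \emph{not} force $K=K_{\lambda,c}$ a.e.\ where $\sin\pi x\ne 0$; it only forces $\Tlc-K$ to have constant sign $\sgn(\sin\pi x)$ on its support.  The correct route (which the paper takes) is: equality forces $K(n)=\Tlc(n)=K_{\lambda,c}(n)$ for all $n\in\Z\setminus\{0\}$, and then the entire function $K-K_{\lambda,c}$, which is integrable, of exponential type $\pi$ (not $2\pi$, as you write), and vanishes at all nonzero integers, must equal $\alpha\sin(\pi z)/(\pi z)$ by a classical interpolation theorem; integrability then forces $\alpha=0$.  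Your parenthetical ``cleanest route'' also misstates the type and invokes vanishing of $K-K_{\lambda,c}$ at the integers, which is not available for a general competitor $K$; what is actually used is that $\widehat{K-K_{\lambda,c}}$ is supported in $[-\tfrac12,\tfrac12]$ and continuous, so it vanishes at every half-integer $\tfrac{2j+1}{2}$, killing every term of the Fourier expansion of $\sgn(\sin\pi x)$.
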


\smallskip

\begin{theorem}[Optimal one-sided approximations] \label{tr-majorant-thm}

Let $\delta >0$ and $\lambda >0$.

\smallskip

\noindent {\rm (i) (Minorant)}  Let $c\le e^{-\delta^{-1}\lambda}$. The inequality
\begin{align}\label{th2-eq1}
L_{\delta^{-1}\lambda,c}(\delta x) \le \Tlc(x) 
\end{align}
holds for all real $x$. If $L$ is an entire function of exponential type $2\pi\delta$ satisfying $L(x)\le \Tlc(x)$ for all real $x$, then
\begin{align}\label{th2-eq2}
\int_{-\infty}^\infty \big\{\Tlc(x) - L(x)\big\} \,\dx \ge \frac{1}{\delta} \left( \frac{\delta}{\lambda} -\frac{c}{2} -\frac{1}{e^{\delta^{-1}\lambda}-1}\right),
\end{align}
with equality if and only if $L(z) = L_{\delta^{-1}\lambda,c}(\delta z)$ for all $z \in \C$.

\medskip

\noindent {\rm (ii) (Majorant)} Let $c \le 1$. The inequality
\begin{align}\label{th3-eq1}
M_{\delta^{-1}\lambda,c}(\delta x)\geq \Tlc(x)  
\end{align}
holds for all real $x$. If $M$ is an entire function of exponential type $2\pi\delta$ satisfying $M(x)\ge \Tlc(x)$ for all real $x$, then
\begin{align}\label{th3-eq2}
\int_{-\infty}^\infty \big\{M(x) - \Tlc(x) \big\} \,\dx \ge \frac{1}{\delta} \left(\frac{1}{1-e^{-\delta^{-1}\lambda}} -\frac{\delta}{\lambda} -\frac{c}{2}\right),
\end{align}
with equality if and only if $M(z) = M_{\delta^{-1}\lambda,c}(\delta z)$ for all $z \in \C$.
\end{theorem}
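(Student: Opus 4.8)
The plan is to reduce first to the normalized case $\delta=1$: replacing $x$ by $\delta x$ carries $L_{\delta^{-1}\lambda,c}(\delta x)$ and $M_{\delta^{-1}\lambda,c}(\delta x)$ to $L_{\lambda,c}$ and $M_{\lambda,c}$, turns exponential type $2\pi\delta$ into $2\pi$, and multiplies every $L^1(\R)$-integral by $\delta$, so it suffices to argue for $\delta=1$. The proof then has two largely independent halves: $(\mathrm a)$ verify that the functions $L_{\lambda,c}$, $M_{\lambda,c}$ of \eqref{defKLM} are entire of exponential type $2\pi$, obey the one-sided inequalities \eqref{th2-eq1} and \eqref{th3-eq1} in the stated ranges of $c$, and that $\int_{\R}\{\Tlc-L_{\lambda,c}\}\,\dx$, $\int_{\R}\{M_{\lambda,c}-\Tlc\}\,\dx$ equal the right-hand sides of \eqref{th2-eq2}, \eqref{th3-eq2}; and $(\mathrm b)$ show no admissible competitor does better, with equality only for these functions. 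For $(\mathrm a)$ I would first record the analytic and interpolation data: using $|\sin\pi z|^2\le e^{2\pi|\mathrm{Im}\,z|}$ and the locally uniform convergence of the bracketed series — the grouping of $\Tlc'(n)/(z-n)-\Tlc'(n)/z$ with the isolated term $-c/z$ is precisely what makes the series converge and the difference with $\Tlc$ integrable on $\R$ — one checks that $L_{\lambda,c}$ and $M_{\lambda,c}$ are entire of exponential type $2\pi$, that the double poles of the bracket at the integers are cancelled by the zeros of $\sin^2\pi z$, and that both functions interpolate $\Tlc$ to second order at each $n\ge1$, vanish to second order at each $n\le -1$, and satisfy $L_{\lambda,c}(0)=0$ and $M_{\lambda,c}(0)=1-c$ (the two one-sided values of $\Tlc$ at the jump), so that $M_{\lambda,c}(z)-L_{\lambda,c}(z)=(1-c)(\sin\pi z/\pi z)^2$.

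The key structural step is to compare this interpolation data with the optimal entire minorants and majorants of the two base functions $\Tlc$ is built from, namely the truncated exponential $e_\lambda^+(x)=e^{-\lambda x}$ for $x>0$ (and $0$ for $x<0$), from \cite{GV}, and the Heaviside step $\mathfrak{s}$, from \cite{V}: writing $\ell_\lambda\le e_\lambda^+\le\mathfrak{m}_\lambda$ and $\ell_{\mathfrak{s}}\le\mathfrak{s}\le\mathfrak{m}_{\mathfrak{s}}$ for these, all of exponential type $2\pi$, one identifies from \eqref{defKLM}
\[
L_{\lambda,c}=\ell_\lambda-c\,\ell_{\mathfrak{s}},\qquad M_{\lambda,c}=\mathfrak{m}_\lambda-c\,\mathfrak{m}_{\mathfrak{s}}.
\]
Since the base-case $L^1$-errors $\int(e_\lambda^+-\ell_\lambda)=\tfrac1\lambda-\tfrac1{e^\lambda-1}$, $\int(\mathfrak{m}_\lambda-e_\lambda^+)=\tfrac1{1-e^{-\lambda}}-\tfrac1\lambda$ and $\int(\mathfrak{s}-\ell_{\mathfrak{s}})=\int(\mathfrak{m}_{\mathfrak{s}}-\mathfrak{s})=\tfrac12$ are known, linearity gives $\int_{\R}\{\Tlc-L_{\lambda,c}\}\,\dx=\tfrac1\lambda-\tfrac c2-\tfrac1{e^\lambda-1}$ and $\int_{\R}\{M_{\lambda,c}-\Tlc\}\,\dx=\tfrac1{1-e^{-\lambda}}-\tfrac1\lambda-\tfrac c2$, i.e. the right-hand sides of \eqref{th2-eq2} and \eqref{th3-eq2} with $\delta=1$.

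The heart of the proof, and the step I expect to be the main obstacle, is the pointwise sign inequality. From the identities above,
\[
\Tlc-L_{\lambda,c}=(e_\lambda^+-\ell_\lambda)-c\,(\mathfrak{s}-\ell_{\mathfrak{s}}),\qquad M_{\lambda,c}-\Tlc=(\mathfrak{m}_\lambda-e_\lambda^+)-c\,(\mathfrak{m}_{\mathfrak{s}}-\mathfrak{s}),
\]
in each case a difference of two \emph{nonnegative} error functions — here is exactly the difficulty flagged in the introduction, that the difference of a majorant and a minorant need not one-sidedly approximate the difference. For $c\le0$ both right-hand sides are visibly $\ge0$. For $c>0$ the claims reduce to the domination estimates $e_\lambda^+-\ell_\lambda\ge c\,(\mathfrak{s}-\ell_{\mathfrak{s}})$ and $\mathfrak{m}_\lambda-e_\lambda^+\ge c\,(\mathfrak{m}_{\mathfrak{s}}-\mathfrak{s})$ on all of $\R$ — equivalently, that the infimum over $\R$ of the ratio of the two error functions (a ratio that extends continuously across their shared double zeros at the positive integers) equals exactly $e^{-\lambda}$ in the minorant case and $1$ in the majorant case. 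Establishing these two ratio bounds — plausibly by an interval-by-interval analysis of the explicitly known error functions, or via a subordination/integral representation linking the exponential error to the step error — is the technical crux, and it is also what pins down the sharp ranges $c\le e^{-\lambda}$ and $c\le1$.

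For $(\mathrm b)$, let $L$ be entire of exponential type $2\pi$ with $L\le\Tlc$ on $\R$; we may assume $\Tlc-L\in L^1(\R)$, else \eqref{th2-eq2} is trivial. Since $L$ is continuous and $\Tlc\equiv0$ on $(-\infty,0)$, one has $L(n)\le0=L_{\lambda,c}(n)$ for $n\le0$ (letting $x\uparrow0$ when $n=0$) and $L(n)\le\Tlc(n)=L_{\lambda,c}(n)$ for $n\ge1$, so $L_{\lambda,c}(n)-L(n)\ge0$ for all $n\in\Z$. As $L_{\lambda,c}-L$ is entire of exponential type $2\pi$ and lies in $L^1(\R)$, the quadrature formula for such functions (see \cite{V}) gives
\[
\int_{\R}\{\Tlc-L\}\,\dx-\int_{\R}\{\Tlc-L_{\lambda,c}\}\,\dx=\int_{\R}\{L_{\lambda,c}-L\}\,\dx=\sum_{n\in\Z}\big(L_{\lambda,c}(n)-L(n)\big)\ge0,
\]
which is \eqref{th2-eq2}. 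If equality holds then $L(n)=L_{\lambda,c}(n)$ for every $n$; since $L\le\Tlc$, each $n\ne0$ is then a double zero of $\Tlc-L$, so $L'(n)=\Tlc'(n)=L_{\lambda,c}'(n)$ there, whence $z(L_{\lambda,c}-L)(z)/\sin^2\pi z$ is entire of at most linear growth — hence affine — and this is incompatible with $L_{\lambda,c}-L\in L^1(\R)$ unless $L=L_{\lambda,c}$. The majorant case is entirely parallel, now using $M\ge\Tlc$, the value $M_{\lambda,c}(0)=1-c$ (which is consistent with $M(0)$ being at least the larger one-sided limit of $\Tlc$ at $0$ precisely because $c\le1$), and the same quadrature and rigidity arguments.
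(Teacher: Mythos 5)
Your outline captures the scaffolding of the proof correctly: the reduction to $\delta=1$ by scaling, the decomposition $L_{\lambda,c}=L_\lambda-cL_0$ and $M_{\lambda,c}=M_\lambda-cM_0$ into the Graham--Vaaler extremal functions of the truncated exponential and the Vaaler extremal functions of the step, the computation of the $L^1$-errors by linearity from the known base-case integrals, and the optimality/uniqueness argument via the quadrature formula and the rigidity of integrable type-$2\pi$ functions vanishing to second order at the nonzero integers. All of that agrees with the paper.

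However, there is a genuine gap, and it sits precisely where you yourself flag the ``technical crux'': you never prove the pointwise inequalities \eqref{th2-eq1} and \eqref{th3-eq1}, i.e.\ the domination bounds $e_\lambda^+-\ell_\lambda\ge c\,(\mathfrak{s}-\ell_{\mathfrak{s}})$ and $\mathfrak{m}_\lambda-e_\lambda^+\ge c\,(\mathfrak{m}_{\mathfrak{s}}-\mathfrak{s})$ on all of $\R$, in your notation. Saying the infimum of a ratio of two error functions equals $e^{-\lambda}$ or $1$ is exactly a restatement of the theorem, not a proof; and ``interval-by-interval analysis'' or ``a subordination/integral representation linking the exponential error to the step error'' is a suggestion, not an argument. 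Since these inequalities are the only nontrivial content of the theorem (the error evaluation and the optimality argument are routine once they hold), the proof is incomplete as it stands.

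The paper closes this gap via the integral representation
\[
M_{\lambda}(x)-E_\lambda(x)=\frac{\sin^2\pi x}{\pi^2}\int_0^{\infty}\{B(\lambda+w)-B(\lambda)\}e^{xw}\,\dw\quad(x<0),
\]
and its analogue for $x>0$, where $B(w)=w/(1-e^{-w})$. For the majorant with $c\le1$, the nonnegativity of the integrand $B(\lambda+w)-B(\lambda)-c\{B(w)-B(0)\}$ follows from the convexity of $B$ (computed from $B''$) together with the superadditivity lemma for convex functions vanishing at the origin. For the minorant one first treats the endpoint $c=e^{-\lambda}$: defining $g(\lambda,w)=e^\lambda B(\lambda+w)-e^\lambda B(\lambda)-B(w)+1-w(e^\lambda-1)$, one checks $g(0,w)=0$ and $\partial g/\partial\lambda\le0$ (using the monotonicity of $x\mapsto(e^x-1)^{-1}-x(e^x-1)^{-2}$), giving the sign of the integrand; then the general case $c\le e^{-\lambda}$ follows by writing $\Tlc=T_{\lambda,e^{-\lambda}}+(e^{-\lambda}-c)E_0$ and adding the two minorants. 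You should supply an argument at this level of concreteness before the theorem is proved.
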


\noindent {\it Remark 1.} For $c>1$ it turns out that the function $L_{\lambda,c}$ is the majorant of $\Tlc$ of type $2\pi$ (the usual change of variables gives the case $2\pi \delta$). This can be seen by noting that $\Tlc$ is the sum of $T_{\lambda,1}$ and of the step function that equals zero for negative $x$ and $c-1<0$ for positive $x$. The optimal majorant of $\Tlc$ can then be shown to be the sum of the majorant of $T_{\lambda,1}$ from Theorem \ref{tr-majorant-thm} and of the optimal majorant of the step function which can be obtained from \cite{V}. We omit the calculations since they are not needed in this paper.

\smallskip

\noindent {\it Remark 2.} Once we have solved the truncated problem we can easily obtain the solution of the odd problem. In fact, considering the odd function 
\begin{equation*}
\widetilde{T}_{\lambda, c}(x) = \sgn(x) \big(e^{-\lambda |x|} - c\big) = \Tlc (x) - \Tlc(-x),
\end{equation*}
we define
\begin{equation*}
\widetilde{K}_{\lambda, c}(z) = K_{\lambda,c}(z) - K_{\lambda,c}(-z).
\end{equation*}
When $c \leq e^{-\lambda}$ we use \eqref{th1-eq1} to get
\begin{align*}
\sin \pi x\, & \big\{ \widetilde{T}_{\lambda, c}(x) - \widetilde{K}_{\lambda, c}(x) \big\} \\
& = \sin \pi x\,  \{\Tlc (x) - K_{\lambda,c}(x)\} + \sin(-\pi x) \,  \{\Tlc (-x) - K_{\lambda,c}(-x)\} \geq 0
\end{align*}
for all real $x$. As we shall see in the proof of Theorem \ref{tr-ba-thm}, this is enough to conclude that $\widetilde{K}_{\lambda, c}$ is the unique best approximation of type $\pi$ of $\widetilde{T}_{\lambda, c}$. In a similar way we define
\begin{equation*}
\widetilde{L}_{\lambda, c}(z) = L_{\lambda,c}(z) - M_{\lambda,c}(-z)
\end{equation*}
and 
\begin{equation*}
\widetilde{M}_{\lambda, c}(z) = M_{\lambda,c}(z) - L_{\lambda,c}(-z). 
\end{equation*}
When $c \leq e^{-\lambda}$ we use \eqref{th2-eq1} and \eqref{th3-eq1} to get 
\begin{equation*}
\widetilde{L}_{\lambda, c}(x) \leq \widetilde{T}_{\lambda, c}(x) \leq \widetilde{M}_{\lambda, c}(x)
\end{equation*}
for all real $x$, and these functions interpolate $\widetilde{T}_{\lambda, c}$ at the integers. As we shall see in the proof of Theorem \ref{tr-majorant-thm}, this implies that they are the unique extremal one-sided approximations of type $2\pi$ for $\widetilde{T}_{\lambda, c}$. The general case of approximations of type $2 \pi \delta$ follows by a simple change of variables.

\subsection{General measures} Let $\nu$ be a nonnegative Borel measure on $(0,\infty)$. We specialize our shift parameter by putting $c = e^{-\lambda}$ and address here the extremal problem for the class of functions $\TT_{\nu}:\R \to \R \cup \{\infty\}$ given by
\begin{equation*}
\TT_{\nu}(x) = \int_0^{\infty}  T_{\lambda, e^{-\lambda}}(x)\,\dnu.
\end{equation*}
For technical reasons we introduce a new parameter $a>0$ and consider the family
\begin{equation*}
\TT_{\nu}(a; x) = \int_0^{\infty}  T_{a \lambda, e^{-\lambda}}(x)\,\dnu.
\end{equation*}
The problem of finding optimal approximations of general type $2\pi \delta$ for $\TT_{\nu}(x)$ is equivalent, via a simple scaling argument, to the problem of finding optimal approximations of type $2\pi$ for $\TT_{\nu}\big(\delta^{-1}; x\big)$.

\smallskip

For the best approximation problem, the minimal condition one should impose on the measure $\nu$ is the $\nu$-integrability of the right-hand side of \eqref{th1-eq2} in the case $c = e^{-\lambda}$. An analysis of the asymptotics of the right-hand side of \eqref{th1-eq2} when $\lambda \to 0$ and $\lambda \to \infty$ reveals that we must require
\begin{align}\label{min-nu-growth}
\int_0^\infty \frac{\lambda}{1+\lambda^2} \, \dnu<\infty.
\end{align}
Similarly, for the minorant problem we should have $\nu$-integrability of the right-hand side of \eqref{th2-eq2} and this condition is again given by \eqref{min-nu-growth}. For the majorant problem the requirement of $\nu$-integrability of the right-hand side of \eqref{th3-eq2} gives us the more restrictive condition
\begin{align}\label{maj-nu-growth}
\int_0^\infty \frac{\lambda}{1+\lambda} \, \dnu<\infty.
\end{align}
With either condition \eqref{min-nu-growth} or \eqref{maj-nu-growth}, for $x>0$, it is clear that $\lambda \mapsto \big(e^{-a\lambda x} - e^{-\lambda}\big)$ is $\nu$-integrable and that $x\mapsto \TT_{\nu}(a; x)$ is differentiable. Denoting $\tfrac{\partial}{\partial x} \TT_{\nu}(a; x)  = \TT'_{\nu}(a; x) $ we have, for $x>0$, 
\begin{equation*}
\TT'_{\nu}(a; x) = \int_0^{\infty} T'_{a \lambda, e^{-\lambda}}(x)\,\dnu = \int_0^{\infty} -a\lambda e^{-a \lambda x}\,\dnu.
\end{equation*}

\smallskip 

Inspired by \eqref{defKLM} we consider now the following functions, for $z \in \C$,
\begin{align}
\begin{split}\label{defKK}
\KK_{\nu}(a; z)& =\lim_{N\to \infty} \frac{\sin\pi z}{\pi} \left\{ \sum_{n=1}^N (-1)^{n} \frac{\TT_{\nu}(a;n)}{(z-n)}\right\} \\
& \qquad\qquad + \frac{\sin\pi z}{\pi z} \left\{ \int_0^{\infty}\left(-\frac{e^{-\lambda}}{2} + \sum_{n=1}^{\infty} (-1)^{n+1}  e^{- a \lambda n}\right)\dnu \right\},
\end{split}\\
\begin{split}\label{defLL}
\LL_{\nu}(a;z) & = \lim_{N\to \infty}  \frac{\sin^2\pi z}{\pi^2}\left\{ \sum_{n=1}^N \left(\frac{\TT_{\nu}(a;n)}{(z-n)^2} + \frac{\TT_{\nu}'(a;n)}{(z-n)}\right) \right\} \\
& \qquad \qquad + \frac{\sin^2\pi z}{\pi^2 z}\left\{ \int_0^{\infty} \left(-e^{-\lambda } + \sum_{n=1}^\infty  a\,\lambda \,e^{-a \lambda n}\right) \dnu \right\},
\end{split}\\\label{defMM}
\MM_\nu(a;z) &= \LL_\nu(a; z) + \left(\frac{\sin\pi z}{\pi z}\right)^2 \int_0^\infty \big(1-e^{-\lambda}\big) \,\dnu.
\end{align}
Under \eqref{min-nu-growth} we shall prove that the sequence of entire functions in \eqref{defKK} converges uniformly on compact subsets of $\C$ and thus the limit does indeed define an entire function that will be proved to be of exponential type $\pi$. A similar statement holds for \eqref{defLL}, where the limiting entire function  will be proved to be of exponential type $2\pi$. In the case of \eqref{defMM}, the more restrictive condition \eqref{maj-nu-growth} on the measure $\nu$ guarantees the convergence of the last integral. 

\begin{theorem}[Optimal two-sided approximation] \label{ba-nu-thm} Assume that $\nu$ satisfies \eqref{min-nu-growth} and let $\delta\geq1$. The inequality
\begin{align*}
\sin\pi \delta x\,\big\{\TT_\nu(x) - \KK_\nu\big(\delta^{-1};\delta x\big) \big\}\ge 0
\end{align*}
holds for all real $x$. If $\KK$ is an entire function of type $\pi\delta$, then
\begin{align*}
\int_{-\infty}^\infty \big|\TT_\nu(x)- \KK(x)\big|\,\dx \ge \int_0^\infty  \frac{1}{\delta} \left( \frac{1-e^{-\delta^{-1}\lambda}}{\delta^{-1}\lambda \big(1+e^{-\delta^{-1}\lambda}\big)} - \frac{e^{-\lambda}}{2}\right) \dnu,
\end{align*}
with equality if and only if $\KK(z) = \KK_\nu\big(\delta^{-1};\delta z\big)$ for all $z \in \C$.
\end{theorem}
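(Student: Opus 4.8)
\smallskip

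\noindent\textit{Proof plan.} The strategy is to deduce Theorem~\ref{ba-nu-thm} from the base case Theorem~\ref{tr-ba-thm} by integrating in the parameter $\lambda$, in the spirit of \cite{GV, CV2}. The crucial preliminary step is to show that, under \eqref{min-nu-growth} and for $\delta\ge1$, the limit in \eqref{defKK} (with $a=\delta^{-1}$) exists locally uniformly on $\C$ and defines an entire function $\KK_\nu(\delta^{-1};\cdot)$ of exponential type $\pi$ which, in addition, satisfies the pointwise identity
\[
\KK_\nu\big(\delta^{-1};\delta x\big)=\int_0^\infty K_{\delta^{-1}\lambda,\,e^{-\lambda}}(\delta x)\,\dnu \qquad (x\in\R),
\]
the integral being absolutely convergent. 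I would prove this via the estimates already advertised for \eqref{defKK}: writing $\TT_\nu(\delta^{-1};n)=\int_0^\infty\!\big(e^{-\delta^{-1}\lambda n}-e^{-\lambda}\big)\,\dnu$, one bounds the $n$-growth of these coefficients using \eqref{min-nu-growth} and notes that after division by $(z-n)$ the resulting sequence has bounded variation, so Dirichlet's test gives convergence of $\sum_{n\le N}(-1)^n\TT_\nu(\delta^{-1};n)/(z-n)$ uniformly on compacta; the subtracted terms $e^{-\delta^{-1}\lambda n}/z$ and $e^{-\lambda}/(2z)$ are exactly what makes the interchange of $\int_0^\infty\dnu$ with the $n$-summation legitimate under \eqref{min-nu-growth}. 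A uniform estimate $|K_N(z)|\le C_\varepsilon e^{(\pi+\varepsilon)|z|}$ for the partial sums then passes the type $\pi$ to the limit, and Fubini applied to the finite partial sums gives the integral representation.

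\smallskip

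Granting this, the sign inequality follows at once. Theorem~\ref{tr-ba-thm} applied to $T_{\lambda,\,e^{-\lambda}}$ is legitimate because its hypothesis $c\le e^{-\delta^{-1}\lambda}$ becomes $e^{-\lambda}\le e^{-\delta^{-1}\lambda}$, which holds \emph{precisely} when $\delta\ge1$; inequality \eqref{th1-eq1} then reads $\sin\pi\delta x\,\{T_{\lambda,\,e^{-\lambda}}(x)-K_{\delta^{-1}\lambda,\,e^{-\lambda}}(\delta x)\}\ge0$ for all real $x$ and all $\lambda>0$, and integrating this nonnegative quantity against $\dnu$ (using the identity above and that $\TT_\nu$ is finite off $\tfrac1\delta\Z$, while on $\tfrac1\delta\Z$ the inequality is trivial) yields $\sin\pi\delta x\,\{\TT_\nu(x)-\KK_\nu(\delta^{-1};\delta x)\}\ge0$. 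For the value of the error I would write $|\TT_\nu(x)-\KK_\nu(\delta^{-1};\delta x)|=\sgn(\sin\pi\delta x)\,\{\TT_\nu(x)-\KK_\nu(\delta^{-1};\delta x)\}$, insert the integral representations, and apply Tonelli's theorem to the nonnegative integrand $\sgn(\sin\pi\delta x)\,\{T_{\lambda,\,e^{-\lambda}}(x)-K_{\delta^{-1}\lambda,\,e^{-\lambda}}(\delta x)\}=|T_{\lambda,\,e^{-\lambda}}(x)-K_{\delta^{-1}\lambda,\,e^{-\lambda}}(\delta x)|$; the inner $x$-integral is computed by the equality case of \eqref{th1-eq2} with $c=e^{-\lambda}$, giving exactly $\int_0^\infty\tfrac1\delta\big(\tfrac{1-e^{-\delta^{-1}\lambda}}{\delta^{-1}\lambda(1+e^{-\delta^{-1}\lambda})}-\tfrac{e^{-\lambda}}{2}\big)\dnu$, which is finite exactly under \eqref{min-nu-growth}.

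\smallskip

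For the lower bound, let $\KK$ be entire of exponential type $\pi\delta$ with $\TT_\nu-\KK\in L^1(\R)$ (otherwise there is nothing to prove). Since $(\TT_\nu-\KK)\,\sgn(\sin\pi\delta x)\le|\TT_\nu-\KK|$ pointwise,
\begin{align*}
\int_{-\infty}^\infty|\TT_\nu-\KK|\,\dx\ &\ge\ \int_{-\infty}^\infty(\TT_\nu-\KK)\,\sgn(\sin\pi\delta x)\,\dx\\
&=\ \int_{-\infty}^\infty\big|\TT_\nu-\KK_\nu(\delta^{-1};\delta\cdot)\big|\,\dx\ +\ \int_{-\infty}^\infty\big(\KK_\nu(\delta^{-1};\delta x)-\KK(x)\big)\,\sgn(\sin\pi\delta x)\,\dx,
\end{align*}
where on the first summand we used the sign inequality just established. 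The last integral vanishes: $G:=\KK_\nu(\delta^{-1};\delta\cdot)-\KK$ is real, entire of exponential type $\le\pi\delta$, and lies in $L^1(\R)$, so $\widehat G$ is continuous with support in $[-\delta/2,\delta/2]$, hence $\widehat G(\pm\delta/2)=0$; expanding $\sgn(\sin\pi\delta x)=\tfrac4\pi\sum_{k\ge0}(2k+1)^{-1}\sin\!\big((2k+1)\pi\delta x\big)$ (a boundedly convergent series) and integrating term by term, all frequencies that occur are $\ge\delta/2$ in modulus and meet the spectrum of $G$ only at $\pm\delta/2$, where $\widehat G=0$. This proves the asserted inequality. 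In the case of equality $\TT_\nu-\KK$ must coincide in sign with $\sin\pi\delta x$ almost everywhere, so $\KK$ agrees with $\TT_\nu$, hence with $\KK_\nu(\delta^{-1};\delta\cdot)$, at every $n/\delta$ with $n\in\Z\setminus\{0\}$, and a standard interpolation argument for $L^1$ functions of exponential type $\pi\delta$ forces $\KK(z)=\KK_\nu(\delta^{-1};\delta z)$ identically; the passage to type $2\pi\delta$ and to the odd version is by scaling, as in Remark~2. I expect the genuine difficulty to lie entirely in the first step --- the convergence, exponential type and integral representation of $\KK_\nu(\delta^{-1};\cdot)$ under only the minimal condition \eqref{min-nu-growth}, where the series in \eqref{defKK} is merely conditionally convergent and a naive interchange of summation and integration fails --- while the remaining steps are routine adaptations of the Graham--Vaaler scheme.
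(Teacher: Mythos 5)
Your outline of the \emph{outer} argument matches the paper closely: both reduce to the base case, integrate the sign inequality and the $L^1$-error of Theorem~\ref{tr-ba-thm} against $\dnu$, and handle optimality and uniqueness by pairing $\TT_\nu - \KK$ with $\sgn(\sin\pi\delta x)$ and invoking the compact support of $\ft{G}$; this part is fine and essentially identical to what the paper does. You also correctly identify that the genuine difficulty lies in the first step (convergence, exponential type, and integral representation of $\KK_\nu$), and you correctly observe that $c = e^{-\lambda}\le e^{-\delta^{-1}\lambda}$ is exactly the constraint forcing $\delta\ge 1$.

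However, the route you propose for that first step --- Dirichlet's test plus an asserted uniform estimate $|K_N(z)|\le C_\varepsilon e^{(\pi+\varepsilon)|z|}$ --- is not what the paper does, and as written it contains a real gap. Dirichlet's test can give locally uniform convergence of $\sum_{n\le N}(-1)^n\TT_\nu(\delta^{-1};n)/(z-n)$ on compacta (with some care, since $\TT_\nu(\delta^{-1};n)$ is typically unbounded, e.g.\ $\sim -\log n$ for the truncated logarithm), but locally uniform convergence does \emph{not} by itself yield a growth bound $|K_N(z)|\le c(1+|z|)e^{\pi|y|}$ uniform in $N$, and that bound is precisely what is needed to conclude exponential type $\pi$ for the limit. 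The paper obtains it by a contour integration argument (Section~4): the tail $\sum_{M<n\le N}$ of the interpolation series is identified with a difference $I_k(N+\tfrac12,\Phi;z)-I_k(M+\tfrac12,\Phi;z)$ of integrals over vertical lines (Lemma~\ref{lem2.4}, via the rectangular contour and residue theorem in Proposition~\ref{prop2.3}), which are estimated uniformly in Lemma~\ref{lem2.2}; the fact that $\Phi=\TT_\nu(a;\cdot)$ satisfies the required decay conditions under \eqref{min-nu-growth} is Lemma~\ref{CL_lem7}, quoted from \cite{CV2,CV3}. Similarly, ``Fubini applied to finite partial sums'' is not enough for the integral representation $\TT_\nu-\KK_\nu = \int_0^\infty\{\T_\lambda-\K_\lambda\}\,\dnu$: the paper has to truncate $\nu$ to $\nu_N$, pass the contour-integral identity through Fubini for the finite measure $\nu_N$, and then take $N\to\infty$ by dominated convergence on the line $\Re w=\beta$ using the explicit bound \eqref{CL_pf1}, followed by monotone convergence on the real line to land on the pointwise representation. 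So: same high-level scheme, but the engine that makes the limit-and-interchange step work is an entirely different (and substantially more involved) contour-integration argument, and your proposal leaves that engine unbuilt.
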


\smallskip

\begin{theorem}[Optimal one-sided approximations] \label{maj-nu-thm}.

\smallskip

\noindent {\rm (i) (Minorant)} Assume that $\nu$ satisfies \eqref{min-nu-growth} and let $\delta\geq1$. The inequality
\begin{align*}
\LL_\nu\big(\delta^{-1};\delta x\big)\le \TT_\nu(x)
\end{align*}
holds for all real $x$. If $\LL$ is an entire function of exponential type $2\pi\delta$ satisfying $\LL(x)\le \TT_\nu(x)$ for all real $x$, then
\begin{align*}
\int_{-\infty}^\infty \big\{\TT_\nu(x)-\LL(x)\big\} \,\dx \ge \int_0^\infty  \frac{1}{\delta} \left( \frac{\delta}{\lambda} -\frac{e^{-\lambda}}{2} -\frac{1}{e^{\delta^{-1}\lambda}-1}\right)\dnu,
\end{align*}
with equality if and only if $\LL(z) = \LL_\nu\big(\delta^{-1};\delta z\big)$ for all $z \in \C$. 

\smallskip

\noindent {\rm (ii) (Majorant)} Assume that $\nu$ satisfies \eqref{maj-nu-growth} and let $\delta>0$. The inequality
\begin{align*}
\MM_\nu\big(\delta^{-1};\delta x\big)\ge \TT_\nu(x)
\end{align*}
holds for all real $x$. If $\MM$ is an entire function of exponential type $2\pi\delta$ satisfying $\MM(x)\ge \TT_\nu(x)$ for all real $x$, then
\begin{align*}
\int_{-\infty}^\infty \big\{\MM(x) - \TT_\nu(x)\big\}\, \dx \ge \int_0^\infty \frac{1}{\delta}\left( \frac{1}{1-e^{-\delta^{-1}\lambda}} -\frac{\delta}{\lambda} -\frac{e^{-\lambda}}{2}\right) \dnu,
\end{align*}
with equality if and only if $\MM(z) = \MM_\nu\big(\delta^{-1};\delta z\big)$ for all $z \in \C$. 
\end{theorem}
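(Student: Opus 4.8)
The plan is to obtain the statements for the measure-integrated functions $\TT_\nu$ by integrating the base-case results of Theorems~\ref{tr-ba-thm} and~\ref{tr-majorant-thm} against $\dnu$, with $c=e^{-\lambda}$ built into each $T_{\lambda,c}$. First I would reduce to type $2\pi$ (resp.\ $\pi$) by the scaling argument already indicated in the text, replacing $\nu$ by its pushforward under $\lambda\mapsto\delta^{-1}\lambda$ and absorbing $\delta$; the extra parameter $a>0$ in $\TT_\nu(a;x)$ exists precisely to carry this rescaling, so it suffices to prove everything for $\delta=1$, $a=1$. Next I would establish the analytic preliminaries asserted just before the theorem: under \eqref{min-nu-growth} the series in \eqref{defKK} and \eqref{defLL} converge locally uniformly on $\C$ to entire functions of type $\pi$ and $2\pi$ respectively, and under \eqref{maj-nu-growth} the last integral in \eqref{defMM} converges. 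The key point here is that the base-case functions $K_{\lambda,e^{-\lambda}}$, $L_{\lambda,e^{-\lambda}}$, $M_{\lambda,e^{-\lambda}}$ depend continuously on $\lambda$ with growth in $\lambda$ controlled (as $\lambda\to0$ and $\lambda\to\infty$) by exactly the weights $\lambda/(1+\lambda^2)$ and $\lambda/(1+\lambda)$ appearing in \eqref{min-nu-growth}, \eqref{maj-nu-growth}; one then invokes dominated convergence / Fubini to interchange $\int_0^\infty(\cdot)\,\dnu$ with the Dirichlet-type sums and with the $\frac{\sin\pi z}{\pi}\sum(-1)^n(\cdot)$ envelope, so that, e.g., $\KK_\nu(1;z)=\int_0^\infty K_{\lambda,e^{-\lambda}}(z)\,\dnu$, and similarly for $\LL_\nu$, $\MM_\nu$. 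This identification is the heart of the matter.

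Granting the interchange, the sign inequalities follow pointwise by integration: for real $x$ with $\sin\pi x\neq0$, \eqref{th1-eq1} gives $\sin\pi x\,\{T_{\lambda,e^{-\lambda}}(x)-K_{\lambda,e^{-\lambda}}(x)\}\ge0$ for every $\lambda$ (note $c=e^{-\lambda}\le e^{-\lambda}$ always holds, so the hypothesis $c\le e^{-\lambda}$ is automatic here, which is why one can integrate over all of $(0,\infty)$ — and for the majorant, $c=e^{-\lambda}\le1$ is likewise automatic, which explains why $\delta>0$ is unrestricted in part (ii)); integrating against $\dnu$ yields $\sin\pi x\,\{\TT_\nu(x)-\KK_\nu(1;x)\}\ge0$, and similarly $\LL_\nu\le\TT_\nu\le\MM_\nu$ from \eqref{th2-eq1} and \eqref{th3-eq1}. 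For the $L^1$-optimality I would use the standard quadrature/interpolation argument: the extremal base functions interpolate $T_{\lambda,e^{-\lambda}}$ (and, for the one-sided ones, its derivative) at the integers, hence so do $\KK_\nu,\LL_\nu,\MM_\nu$ interpolate $\TT_\nu$ (and $\TT_\nu'$) at the integers; then for any competitor $\LL$ of type $2\pi$ with $\LL\le\TT_\nu$, the difference $\TT_\nu-\LL\ge0$ is sampled by the Poisson-summation / Vaaler quadrature formula for type-$2\pi$ functions, giving $\int_{-\infty}^\infty(\TT_\nu-\LL)\,\dx\ge\sum_{n\in\Z}(\TT_\nu-\LL)(n)=\sum_n(\TT_\nu-\LL_\nu)(n)$, with equality forcing $\LL=\LL_\nu$; one computes the last sum by integrating the base-case value $\sum_n(T_{\lambda,e^{-\lambda}}-L_{\lambda,e^{-\lambda}})(n)=\frac{\delta}{\lambda}-\frac{e^{-\lambda}}2-\frac1{e^\lambda-1}$ (the bracket from \eqref{th2-eq2} with $\delta=1$) against $\dnu$. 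The two-sided case is the analogue with the type-$\pi$ quadrature at the half-integers (or the signed formula built into \eqref{th1-eq1}), and the majorant case is symmetric.

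The main obstacle I anticipate is the rigorous justification of the interchange of integration and the limiting procedures — in particular showing that the partial sums $\sum_{n=1}^N$ inside \eqref{defKK}–\eqref{defLL} converge uniformly on compact sets \emph{after} integrating in $\lambda$, which requires a uniform-in-$z$, $\nu$-integrable bound for the tails of the Dirichlet series uniform over $\lambda$ in each of the regimes $\lambda$ small and $\lambda$ large. This is where the precise choice $c=e^{-\lambda}$ pays off: it makes the constant term $-\frac{e^{-\lambda}}2+\sum_{n\ge1}(-1)^{n+1}e^{-\lambda n}=-\frac{e^{-\lambda}}2+\frac{e^{-\lambda}}{1+e^{-\lambda}}$ and its $L$-analogue $O(\lambda/(1+\lambda^2))$, matching \eqref{min-nu-growth}, so that the ``$\frac{\sin\pi z}{\pi z}\{\,\cdots\,\}$'' correction terms are integrable and the whole construction closes. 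A secondary technical point is handling the possible value $\TT_\nu(x)=+\infty$: this can only occur at $x\le0$ (indeed at $x=0$, where $T_{\lambda,e^{-\lambda}}(0)=\tfrac12(1-e^{-\lambda})\ge0$ and the integral may diverge), and one checks that it does not affect the quadrature sums since the relevant interpolation nodes and sign conditions involve $\TT_\nu-\LL_\nu$, $\MM_\nu-\TT_\nu$, etc., which remain finite and nonnegative.
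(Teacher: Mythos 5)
Your overall plan — build the $\TT_\nu$ results by integrating the base case of Theorems~\ref{tr-ba-thm} and~\ref{tr-majorant-thm} against $\dnu$ with $c=e^{-\lambda}$, after scaling to $\delta=1$ — is the right high-level strategy, and your remarks about why $c=e^{-\lambda}\le e^{-\lambda}$ and $c=e^{-\lambda}\le 1$ are automatic, and why the majorant case admits all $\delta>0$, are exactly on target.

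However, the step you yourself flag as ``the heart of the matter'' and ``the main obstacle I anticipate'' — establishing an identity of the form $\KK_\nu(1;z)=\int_0^\infty K_{\lambda,e^{-\lambda}}(z)\,\dnu$ by proving uniform-in-$\lambda$, $\nu$-integrable tail bounds for the interpolation series and then using Fubini — is precisely where your proposal has a genuine gap and is \emph{not} the route the paper takes. The paper never proves that the interpolation series $\frac{\sin\pi z}{\pi}\sum_n(-1)^n\frac{\T_\lambda(a;n)}{z-n}$ can be integrated term-by-term against $\dnu$ for a general $\nu$ satisfying \eqref{min-nu-growth}; the constants in the pointwise bound $|\K_\lambda(a;z)|\le c(\lambda)(1+|z|)e^{\pi|y|}$ degenerate as $\lambda\to 0$, and the required uniform tail control is not readily available. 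Instead, the paper (i) defines $\KK_\nu(a;z)$ directly via the interpolation series in the already-integrated nodes $\TT_\nu(a;n)$, proving convergence and exponential type by the contour-integral machinery of Section~4 (Lemmas~\ref{lem2.1}--\ref{lem2.6} and Proposition~\ref{prop2.3}) applied to $\Phi=\TT_\nu(a;\cdot)$, with hypotheses \eqref{cv10}--\eqref{cv12} supplied by Lemma~\ref{CL_lem7}; (ii) truncates the measure to $\nu_N=\nu|_{[1/N,N]}$, for which Fubini is harmless by Lemma~\ref{lem_unif_bound}; and (iii) passes $N\to\infty$ not via uniform tail bounds on the series but via the single vertical-line contour integral of Lemma~\ref{lem2.4}, where the integrand is controlled uniformly in $N$ by the $\nu$-integrable majorant \eqref{CL_pf1}. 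What ultimately gets proved is the \emph{difference} identity $\TT_\nu(a;x)-\LL_\nu(a;x)=\int_0^\infty\{\T_\lambda(a;x)-\L_\lambda(a;x)\}\,\dnu$ for real $x$ (and the analogous statement for $\KK_\nu$, $\MM_\nu$), not the more ambitious term-by-term identity you posit; the sign inequalities and $L^1$-values then follow by monotone convergence in $\lambda$. Without Section~4's framework or a comparable substitute, your plan as written does not close.

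A secondary slip: in the optimality argument you write $\int(\TT_\nu-\LL)\,\dx\ge\sum_{n\in\Z}(\TT_\nu-\LL)(n)$ as a quadrature inequality, but $\TT_\nu-\LL$ is not bandlimited, so no such quadrature applies. The correct argument (as in the proof of Theorem~\ref{tr-majorant-thm}) applies Poisson summation to the bandlimited, integrable, BV function $\LL_\nu-\LL$ of type $2\pi$, giving $\int(\LL_\nu-\LL)\,\dx=\sum_{n}\{\LL_\nu(n)-\LL(n)\}\ge 0$ since $\LL(n)\le\TT_\nu(n)=\LL_\nu(n)$ for $n\neq 0$; from this $\int(\TT_\nu-\LL)\ge\int(\TT_\nu-\LL_\nu)$, and equality forces agreement at and near the integers, pinning down $\LL=\LL_\nu$ via Vaaler's interpolation theorem.
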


The solutions to the corresponding extremal problems for the family of odd functions given by $\widetilde{\TT}_{\nu}(x) = \TT_{\nu}(x) -\TT_{\nu}(-x)$ follow in the same way as in Remark 2. For the best approximation we need  \eqref{min-nu-growth} and exponential type at least $\pi$. For the one-sided approximations we need the stronger assumption \eqref{maj-nu-growth} and exponential type at least $2\pi$.

\subsection{Examples} Theorems \ref{ba-nu-thm} and \ref{maj-nu-thm} allow measures $\nu$ with a stronger singularity at the origin than the Gaussian subordination framework of \cite{CL} permits. For the best approximation (of type at least $\pi$) and minorant (of type at least $2\pi$) we can consider for instance the measures ${\rm d}{\nu_{\alpha}} (\lambda) = \lambda^{-\alpha} \, \dl$, with $0< \alpha<2$, for they satisfy \eqref{min-nu-growth}.  We then get truncated shifted power functions
\begin{equation}\label{exem1}
\TT_{\nu_{\alpha}}(x) = \left\{
\begin{array}{rl}
\Gamma(1- \alpha) \big\{ |x|^{\alpha -1} -1\big\} & {\rm if} \ \ x>0,\\
0 & {\rm if} \ \ x<0,
\end{array}
\right.
\end{equation}
if $\alpha \neq 1$, and in the case $\alpha = 1$ we get the truncated logarithm
\begin{equation*}
\TT_{\nu_{-1}}(x) = \left\{
\begin{array}{rl}
-\log x & {\rm if} \ \ x>0,\\
0 & {\rm if} \ \ x<0.
\end{array}
\right.
\end{equation*}
The measure $\nu_{\alpha}$ satisfies \eqref{maj-nu-growth} only if $1< \alpha <2$, and in this case we also get majorants (of any exponential type) for the functions in \eqref{exem1}. For the odd versions we can still consider $0< \alpha <2$ for the best approximation problem  (of type at least $\pi$) and $1< \alpha <2$ for the one-sided approximations (of type at least $2\pi$). Notice that, in the framework of \cite{CL}, one gets instead the best approximation and minorants of any exponential type for the truncated power functions $|x|^{\beta}$ for $-1 < \beta < 0$.

\section{Proofs of Theorems \ref{tr-ba-thm} and \ref{tr-majorant-thm}}
\subsection{Preliminaries} We define functions $b:\R\to\R$ and $B:\R \to \R$ by 
\begin{equation}\label{def-b}
b(w) = \frac{1}{1+e^w},
\end{equation}
and
\begin{equation}\label{def-B}
B(w) = \left\{
\begin{array}{rl}
\displaystyle \frac{w}{1-e^{-w}} & {\rm if}\ \ w \neq 0,\\
1& {\rm if}\ \ w = 0.\\
\end{array}
\right.
\end{equation}

We define for $\lambda\ge 0$ the function $E_\lambda: \R \to \R$ by
\begin{equation*}
E_{\lambda}(x) = \left\{
\begin{array}{cc}
e^{-\lambda x} & \textrm{if} \ \ x>0,\\
\frac{1}{2} & \textrm{if} \ \ x=0,\\
0 & \textrm{if} \ \ x<0.
\end{array}
\right.
\end{equation*}

It is reasonable to expect that for $\lambda>0$ the optimal functions for $\Tlc$ are related to the optimal functions of $\El$ and $cE_0$ (that can be found in \cite{GV} and \cite{V}, respectively). We define therefore
\begin{align*}
K_{\lambda}(z) &= \frac{\sin\pi z}{\pi} \sum_{n=1}^\infty (-1)^n\left(\frac{E_\lambda(n)}{z-n}-\frac{e^{-\lambda n}}{z}\right), \\
M_{\lambda}(z) &= \frac{\sin^2\pi z}{\pi^2} \left\{\sum_{n=1}^\infty \left(\frac{E_\lambda(n)}{(z-n)^2} + \frac{E_\lambda'(n)}{(z-n)}-\frac{E_\lambda'(n)}{z}\right) + \frac{1}{z^2} \right\},
\end{align*}
for $\lambda>0$, and
\begin{align*}
K_0(z) &= \frac{\sin\pi z}{\pi} \left\{ \sum_{n=1}^\infty \frac{(-1)^n}{z-n} +\frac{1}{2z}\right\},\\
M_{0}(z) &= \frac{\sin^2\pi z}{\pi^2} \left\{\sum_{n=1}^\infty \frac{1}{(z-n)^2} + \frac{1}{z} + \frac{1}{z^2} \right\}.
\end{align*}

\begin{lemma} [Integral representation] Let $\lambda\ge 0$. For $b$ defined in \eqref{def-b} we have
\begin{align}\label{kl-rep}
K_\lambda(x) - E_\lambda(x) &=
\begin{cases}
\displaystyle \frac{\sin\pi x}{\pi } \int_0^\infty \{b(\lambda+ w) - b(\lambda)\} \,e^{x w} \,\dw&\text{ if }x<0, \\[1.5ex]
\displaystyle \frac{\sin \pi x}{\pi} \int_{-\infty}^0 \{b(\lambda) - b(\lambda+w)\}\,e^{x w}\, \dw&\text{ if }x>0,
\end{cases}
\end{align}
and for $B$ defined in \eqref{def-B} we have
\begin{align}\label{ml-rep}
M_{\lambda}(x)-E_\lambda(x) &=  \begin{cases}\displaystyle \frac{\sin^2\pi x}{\pi^2} \int_0^{\infty}\{ B(\lambda + w) - B(\lambda)\}\,e^{xw}\,\dw & \text{ if }x<0,\\[1.5ex]
\displaystyle \frac{\sin^2\pi x}{\pi^2} \int_{-\infty}^{0}\{ B(\lambda) - B(\lambda + w)\}\,e^{xw}\,\dw & \text{ if }x>0.
\end{cases}
\end{align}
\end{lemma}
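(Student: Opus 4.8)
The plan is to prove both identities in parallel: \eqref{kl-rep} involves the kernel $b$ and \eqref{ml-rep} the kernel $B$, but the structure — a fixed trigonometric factor times a one-sided Laplace-type integral, equal to the interpolant minus $E_\lambda$ — is the same, and I would run one three-step argument on both. I describe it for $\lambda>0$; the case $\lambda=0$ will follow either by repeating the argument with the Abel-regularized series that defines $K_0,M_0$, or by letting $\lambda\to0^+$ in the formulas once they are established.

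First I would settle the region $x<0$ by direct computation. For $w>0$ one has $\lambda+w>0$, so the geometric expansions $b(\lambda+w)=\sum_{k\ge1}(-1)^{k+1}e^{-k(\lambda+w)}$ and $B(\lambda+w)=\sum_{k\ge0}(\lambda+w)e^{-k(\lambda+w)}$ converge; multiplying by $e^{xw}$ with $x<0$ and integrating term by term over $(0,\infty)$ (justified by dominated convergence), with $\int_0^\infty e^{(x-k)w}\,\dw=(k-x)^{-1}$ and $\int_0^\infty w\,e^{(x-k)w}\,\dw=(x-k)^{-2}$, and using the elementary sums $\sum_{n\ge1}(-1)^ne^{-\lambda n}=-b(\lambda)$ and $\sum_{n\ge1}E_\lambda'(n)=\lambda-B(\lambda)$ to reassemble the terms with a pole at the origin, the right-hand integral in \eqref{kl-rep} (resp.\ \eqref{ml-rep}) turns into precisely the series defining $K_\lambda(x)$ (resp.\ $M_\lambda(x)$). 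Since $E_\lambda(x)=0$ for $x<0$, this gives \eqref{kl-rep} and \eqref{ml-rep} on $(-\infty,0)$.

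Next I would pass to $x>0$ by analytic continuation. Because $b(\lambda+w)=O(e^{-w})$ and $B(\lambda+w)-(\lambda+w)=O(we^{-w})$ as $w\to+\infty$, the integrals $\int_0^\infty b(\lambda+w)e^{xw}\,\dw$ and $\int_0^\infty\{B(\lambda+w)-(\lambda+w)\}e^{xw}\,\dw$ are holomorphic for $\operatorname{Re}x<1$, while $\int_0^\infty(\lambda+w)e^{xw}\,\dw$ continues to the rational function $-\lambda/x+1/x^2$, whose product with $\sin^2\pi z$ is entire (and $b(\lambda)/z$, $B(\lambda)/z$ acquire the analogous property after multiplication by $\sin\pi z$, $\sin^2\pi z$). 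Hence, after splitting off the constants $b(\lambda)$, $B(\lambda)$ and — for $M_\lambda$ — the linear part, the $x<0$ sides of \eqref{kl-rep}, \eqref{ml-rep} extend holomorphically to $\{\operatorname{Re}x<1\}$, and by the previous step and the identity theorem they agree there with $K_\lambda$, $M_\lambda$. On the other side, the $x>0$ sides of \eqref{kl-rep}, \eqref{ml-rep} are holomorphic for $\operatorname{Re}x>0$ (for $K_\lambda$) and for $\operatorname{Re}x>-1$ (for $M_\lambda$). To match the two representations on the overlap I would invoke the classical bilateral transforms $\int_{-\infty}^\infty(1+e^u)^{-1}e^{xu}\,\du=\pi/\sin\pi x$ (for $0<\operatorname{Re}x<1$) and, after differentiating the principal-value identity $\mathrm{PV}\!\int_{-\infty}^\infty(1-e^{-w})^{-1}e^{xw}\,\dw=-\pi\cot\pi x$ in $x$, the identity $\int_{-\infty}^\infty w(1-e^{-w})^{-1}e^{xw}\,\dw=\pi^2/\sin^2\pi x$ (for $-1<\operatorname{Re}x<0$); substituting $u=\lambda+w$ these become $\int_{-\infty}^\infty b(\lambda+w)e^{xw}\,\dw=e^{-\lambda x}\pi/\sin\pi x$ and $\int_{-\infty}^\infty B(\lambda+w)e^{xw}\,\dw=e^{-\lambda x}\pi^2/\sin^2\pi x$. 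Taking the difference of the two (now analytically continued) sides of \eqref{kl-rep} on the strip $0<\operatorname{Re}x<1$, and of \eqref{ml-rep} on the strip $-1<\operatorname{Re}x<0$, the bilateral transform makes that difference vanish identically there; a final analytic continuation back to $\{\operatorname{Re}x>0\}$ then yields \eqref{kl-rep} and \eqref{ml-rep} for $x>0$, completing the proof.

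The hard part is not any single computation but the bookkeeping of these half-planes and strips: since $\lambda+w$ changes sign on $(-\infty,0)$ the geometric expansion is not uniform there, so term-by-term integration is unavailable for $x>0$ and the detour through analytic continuation is unavoidable; and for $M_\lambda$ the linear growth of $B$ at $+\infty$ means there is no half-plane of holomorphy until one peels off the polynomial part, while the strip on which the relevant bilateral transform converges lies to the \emph{left} of the origin, so the continuation for \eqref{ml-rep} must be carried across $x=0$. The remaining technical points — dominated convergence in the term-by-term integration and differentiating through the principal-value integral — are routine.
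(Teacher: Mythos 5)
Your $x<0$ half (series expansion of $b(\lambda+w)$ and $B(\lambda+w)$ followed by term-by-term Laplace integration) coincides with the paper's proof of that case. For $x>0$, however, your route is genuinely different. The paper evaluates the one-sided integral $\int_{-\infty}^0 b(\lambda+w)e^{xw}\,\dw$ \emph{directly}: after the substitution $u=\lambda+w$ the integral splits as $\int_{-\infty}^0 + \int_0^\lambda$, each piece is expanded in a geometric series and integrated termwise, and the two resulting series are recombined with the partial-fraction identity $\pi/\sin\pi x = \sum_{n\in\Z}(-1)^n/(x-n)$ (and $\pi^2/\sin^2\pi x = \sum_{n\in\Z}(x-n)^{-2}$ for the $M_\lambda$ case). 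You instead avoid touching the $x>0$ integral at all: you analytically continue the already-established $x<0$ representation across $x=0$, and then show that the \emph{difference} of the two candidate representations is exactly a bilateral Laplace integral, which collapses to $e^{-\lambda x}$ by the classical Mellin--Beta identity $\int_{-\infty}^\infty (1+e^u)^{-1}e^{xu}\,\du = \pi/\sin\pi x$ on $0<\Re x<1$ (and its derivative on $-1<\Re x<0$ for $B$). Both proofs rest on equivalent underlying facts --- the partial-fraction expansion of $\csc\pi x$ is precisely what the bilateral transform encodes once one expands $b$ in a series --- but the trade-off differs: the paper's computation is self-contained and elementary at the cost of some bookkeeping with the extra $\int_0^\lambda$ term, while yours is conceptually cleaner (the $x<0$ and $x>0$ formulas are manifestly two halves of one bilateral transform) at the cost of importing the two classical transform evaluations and tracking carefully which half-plane each one-sided integral continues to, including the fact that for $M_\lambda$ the working strip $-1<\Re x<0$ lies to the \emph{left} of the origin. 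Two small points to tighten in a final write-up: for the $x>0$ side of \eqref{kl-rep} (resp.\ \eqref{ml-rep}) the raw integral converges only for $\Re x>0$ --- one must peel off the constant $b(\lambda)-1$ (resp.\ $B(\lambda)$) before it continues to $\Re x>-1$, although the $\sin^k\pi x$ prefactor indeed absorbs the resulting pole at $0$; and the $\lambda=0$ endpoint is handled by the paper uniformly (``the second integral is not present if $\lambda=0$'') whereas you defer it to a limit or Abel-regularization argument, which is plausible but worth one more sentence.
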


\begin{proof} Note first that
\begin{align*}
b(w) = \begin{cases}
\displaystyle \sum_{n=0}^\infty (-1)^n e^{nw}&\text{ if }w<0,\\[1.5ex]
\displaystyle -\sum_{n=1}^\infty (-1)^n e^{-nw} &\text{ if }w>0.
\end{cases}
\end{align*}
We then have for $x<0$ and $\lambda\ge 0$
\begin{align*}
\sum_{n=1}^N (-1)^n \frac{e^{-\lambda n}}{x-n} = -\int_\lambda^\infty e^{-\lambda x} \sum_{n=1}^N (-1)^n \,e^{w(x-n)} \,\dw.
\end{align*}
An application of dominated convergence gives
\begin{align*}
\sum_{n=1}^\infty (-1)^n \frac{e^{-\lambda n}}{x-n} = \int_0^\infty b(\lambda+w) \, e^{w x}\, \dw,
\end{align*}
and an expansion of $x^{-1}$ as a Laplace integral gives \eqref{kl-rep} for $x<0$. To prove the representation for $x>0$ we split after a change of variables
\begin{align*}
\int_{-\infty}^0 b(\lambda+w) \,e^{w x}\, \dw = \int_{-\infty}^0 b(w) \,e^{(w-\lambda)x} \,\dw + \int_0^\lambda b(w) \,e^{(w-\lambda)x} \,\dw,
\end{align*}
and note that the second integral is not present if $\lambda=0$. Dominated convergence gives for the first integral
\begin{align*}
\int_{-\infty}^0 b(w) \,e^{(w-\lambda)x}\, \dw = e^{-\lambda x} \sum_{n=0}^\infty \frac{(-1)^n }{x+n},
\end{align*}
while the second integral satisfies
\begin{align*}
\int_0^\lambda b(w) \,e^{(w-\lambda)x}\, \dw = e^{-\lambda x} \sum_{n=1}^\infty (-1)^{n+1}\left\{\frac{e^{\lambda(x-n)}}{x-n}- \frac{1}{x-n}\right\}.
\end{align*}
Combining these identities with
\begin{align*}
\frac{\pi}{\sin\pi x} = \sum_{n=-\infty}^\infty \frac{(-1)^n}{x-n}
\end{align*}
gives \eqref{kl-rep} when $x>0$.

\smallskip

For \eqref{ml-rep} the calculations are analogous, using the fact that
\begin{align*}
B(w) = \begin{cases}
\displaystyle -w\sum_{n=1}^\infty  e^{nw}&\text{ if }w<0,\\[3.0ex]
\displaystyle w\sum_{n=0}^\infty  e^{-nw} &\text{ if }w>0,
\end{cases}
\end{align*}
and
\begin{align*}
\left(\frac{\pi}{\sin\pi x}\right)^2 = \sum_{n=-\infty}^\infty \frac{1}{(x-n)^2},
\end{align*}
for all $x$.
\end{proof}

Several of the inequalities that are required for the proofs in this section rest on the fact $b$ and $B$ are convex functions. It is well known that a convex function that has value zero at the origin is superadditive on the positive reals. For completeness we include a short proof of this fact in the next lemma.

\begin{lemma}\label{superadditive} Let $f$ be a continuous convex function defined on $[0,\infty)$ with $f(0) =0$. Then $f$ satisfies
\begin{align*}
f(x+y)\ge f(x)+ f(y)
\end{align*}
for all nonnegative $x$ and $y$.
\end{lemma}

\begin{proof}
Since $f(0)=0$ we obtain from the convexity of $f$, for $0\leq t \leq 1$ and $z>0$, that $f(tz)\le tf(z)$. Applying this with $z=x+y$ and  $t=y(x+y)^{-1}$ as well as $z=x+y$ and $t=x(x+y)^{-1}$ gives for all positive $x$ and $y$ the inequality $f(x+y)\ge f(x)+f(y)$.
\end{proof}

\subsection{Proof of Theorem \ref{tr-ba-thm}} Observe first that, via a simple scaling argument, it suffices to prove the result for $\delta =1$, and we henceforth restrict ourselves to this case. 

\subsubsection{Inequalities} We show \eqref{th1-eq1}. For $x<0$ the integral representation \eqref{kl-rep} gives us
\begin{align}\label{klc-rep-in-negx}
\begin{split}
K_{\lambda,c}(x)  &=\frac{\sin\pi x}{\pi} \int_0^\infty \big\{b(\lambda+ w) - b(\lambda) -c(b(w) - b(0))\big\} \,e^{x w}\, \dw.
\end{split}
\end{align}
We consider the endpoint value $c = e^{-\lambda}$ and define
\begin{align*}
g(\lambda,w) = e^\lambda b(\lambda+ w) - e^\lambda b(\lambda) -b(w) + b(0).
\end{align*}
A direct calculation gives
\begin{align*}
\frac{\partial g}{\partial \lambda}(\lambda,w) = e^\lambda\left( \frac{1}{(1+e^{\lambda+w})^2} -\frac{1}{(1+e^\lambda)^2}\right)\le 0
\end{align*}
for $\lambda \ge 0$ and $w\ge 0$. Since $g(0,w) = 0$, it follows that 
\begin{align*}
e^{-\lambda} g(\lambda,w) \le 0,
\end{align*}
and inserting this into \eqref{klc-rep-in-negx} we obtain
\begin{align}
\sin\pi x\, \big\{T_{\lambda,e^{-\lambda}}(x) - K_{\lambda,e^{-\lambda}}(x)\big\} = -\sin\pi x\, K_{\lambda,e^{-\lambda}}(x) \ge 0
\end{align}
for any $x<0$.

\smallskip

Let us consider now the case $x>0$. From \eqref{kl-rep} we obtain 
\begin{align}\label{klc-rep-in-posx}
\begin{split}
K_{\lambda,c}(x) - T_{\lambda,c}(x)  &=\frac{\sin\pi x}{\pi} \int_{-\infty}^0 \!\!\big\{b(\lambda) - b(\lambda+ w)  -c(b(0) - b(w))\big\} \,e^{x w} \,\dw
\end{split}
\end{align}
for any $c \in \R$. To analyze the sign of this difference for $c = e^{-\lambda}$ we set
\[
h(\lambda,w) = -g(\lambda,w).
\]
An analogous calculation shows that $h(\lambda,w)\le 0$ for $\lambda\ge 0$ and $w\le 0$, and inserting this into \eqref{klc-rep-in-posx} we get 
\begin{align}
\sin\pi x \,\big\{T_{\lambda, e^{-\lambda}}(x) - K_{\lambda,e^{-\lambda}}(x)\big\} \ge 0
\end{align}
for any $x >0$.

\smallskip

In the general case $c\le e^{-\lambda}$ we note that
\begin{align*}
\Tlc(x) = T_{\lambda,e^{-\lambda}}(x) + (e^{-\lambda} -c) E_0(x).
\end{align*}
Since $E_0 = \tfrac{1}{2}(\sgn(x)+x)$, from \cite[Lemma 1]{V} we have for all real $x$ that 
\begin{align}\label{k0-ineq}
\sin\pi x\,\big\{E_0(x) - K_0(x)\big\} \ge0.
\end{align}
The identity $K_{\lambda,c} = K_{\lambda,e^{-\lambda}} + (e^{-\lambda} -c)K_0$  implies with \eqref{k0-ineq} that 
\begin{align*}
\sin\pi x\,\big\{\Tlc(x) - K_{\lambda,c}(x)\big\} \ge 0
\end{align*}
for all real $x$. This concludes the proof of \eqref{th1-eq1}.

\subsubsection{Integral evaluation}  Recall that 
\begin{align*}
\sgn(\sin \pi x)\big\{\Tlc(x) - K_{\lambda,c}(x)\big\} & =  \sgn(\sin \pi x)\big\{E_{\lambda}(x) - K_{\lambda}(x)\big\} \\
& =  \sgn(\sin \pi x)\big\{E_{0}(x) - K_{0}(x)\big\} \geq 0.
\end{align*}
Thus
\begin{align*}
 \big|\Tlc(x) - K_{\lambda,c}(x)\big| &= \big|\big(E_{\lambda}(x) - K_{\lambda}(x)\big) - c \big(E_{0}(x) - K_{0}(x)\big)\big| \\
 &=  \big|E_{\lambda}(x) - K_{\lambda}(x)\big| - c \,\big|E_{0}(x) - K_{0}(x)\big|,
\end{align*}
and we arrive at 
\begin{equation*}
\int_{-\infty}^{\infty} \big|\Tlc(x) - K_{\lambda,c}(x)\big|\, \dx = \int_{-\infty}^{\infty} \big|E_{\lambda}(x) - K_{\lambda}(x)\big|\, \dx - c \int_{-\infty}^{\infty} \big|E_{0}(x) - K_{0}(x)\big|\, \dx.
\end{equation*}
From \cite[Theorem 4]{V} we have that 
\begin{equation}\label{EV-1}
\int_{-\infty}^{\infty} |E_{0}(x) - K_{0}(x)|\, \dx = \frac{1}{2}.
\end{equation}
The $L^1$-norm of $E_\lambda - K_\lambda$, for $\lambda>0$, was calculated in \cite[Lemma 3.5]{HZ}. We give a short alternative argument here. Recall the fact that $K_{\lambda}$ is integrable, bounded on $\R$, and has exponential type $\pi$, so by the Paley-Wiener theorem its Fourier transform is a continuous function supported on the interval $[-\tfrac12, \tfrac12]$. Observe that $\sgn(\sin \pi x)$ is a normalized function of bounded variation on $[0,2]$, so the partial sums of the Fourier expansion 
\begin{equation}\label{FTsgn}
\sgn(\sin \pi x) =  \frac{i}{\pi} \lim_{N\to\infty}\sum_{n = -N}^{N-1} \frac{1}{(n+\tfrac12)}e^{-2\pi i (n + \frac12)x}
\end{equation}
are uniformly bounded (and converge at every point $x$). Inserting \eqref{FTsgn} in the identity
\[
\int_{-\infty}^\infty \big|E_{\lambda}(x) - K_{\lambda}(x)\big|\, \dx=\left|\int_{-\infty}^\infty \sgn(\sin \pi x)\big\{E_{\lambda}(x) - K_{\lambda}(x)\big\}\,\dx\right|,
\]
interchanging limit and integration, and using $\widehat{E_{\lambda}}(y)  = (2\pi i y + \lambda)^{-1}$, we get
\begin{align*}
\int_{-\infty}^{\infty} & \big|E_{\lambda}(x) - K_{\lambda}(x)\big|\, \dx = \left|\frac{i}{\pi} \sum_{n\in \Z} \frac{1}{\big(n + \tfrac12\big)} \frac{1}{\big(2\pi i \big(n + \tfrac12\big) + \lambda\big)} \right|.
\end{align*}
Combining the summands $n=k$ and $n=-k-1$, where $k$ is a nonnegative integer, gives us
\begin{align*}
\int_{-\infty}^{\infty} & \big|E_{\lambda}(x) - K_{\lambda}(x)\big|\, \dx =\sum_{k\geq 0} \frac{4}{\lambda^2 + 4 \pi^2 \big(k + \tfrac12\big)^2}.
\end{align*}
Poisson summation implies that
\begin{align*}
\sum_{k\geq 0} \frac{4}{\lambda^2 + 4 \pi^2 \big(k + \tfrac12\big)^2} = \sum_{n \in \Z} \frac{2}{\lambda^2 + 4 \pi^2 (n + \tfrac12)^2} &=\frac{1}{\lambda}  \,\sum_{k \in \Z} (-1)^k e^{-\lambda |k|},
\end{align*}
and evaluation of the series on the right gives
\begin{align}\label{EV-2}
\begin{split}
\int_{-\infty}^{\infty} & \big|E_{\lambda}(x) - K_{\lambda}(x)\big|\, \dx = \frac{1- e^{-\lambda}}{\lambda \big(1 + e^{-\lambda}\big)}.
\end{split}
\end{align}
From \eqref{EV-1} and \eqref{EV-2} we obtain
\begin{equation*}
\int_{-\infty}^\infty \big|\Tlc(x)- K_{\lambda,c}(x)\big| \, \dx = \frac{1 - e^{-\lambda}}{ \lambda \big(1 + e^{-\lambda}\big)} - \frac{c}{2}.
\end{equation*}

\subsubsection{Optimality and uniqueness} Let $K$ be an entire function of exponential type $\pi$ such that $\Tlc - K$ is integrable. It follows that $K-K_{\lambda,c}$ is integrable and, since it has exponential type $\pi$, its Fourier transform is a continuous function supported in $[-\tfrac12,\tfrac12]$. From \eqref{FTsgn} and dominated convergence we have
\begin{align*}
\int_{-\infty}^\infty \sgn(\sin\pi x)\big\{K(x) - K_\lambda(x)\big\}\, \dx =0.
\end{align*}
Since $\sgn(\sin\pi x)\big\{\Tlc(x) - K_{\lambda,c}(x)\big\}\ge 0$, we obtain 
\begin{align}\label{extremal-ineq}
\begin{split}
\int_{-\infty}^\infty \big|\Tlc(x) - K(x)\big|\, \dx &\ge \left|\int_{-\infty}^\infty \sgn(\sin\pi x) \big\{\Tlc(x) - K(x)\big\} \,\dx\right|\\
&= \int_{-\infty}^\infty \sgn(\sin\pi x)\big\{\Tlc(x) - K_{\lambda,c}(x)\big\}\, \dx.
\end{split}
\end{align}
If there is equality in \eqref{extremal-ineq} we must have $K(n) = \Tlc(n) = K_{\lambda,c}(n)$ for all $n \in \Z/\{0\}$. The Fourier transform of $K-K_{\lambda,c}$ is in $L^\infty(\RR)$ and has compact support, hence $K-K_{\lambda,c}$ is in $L^2(\RR)$. It follows \cite[Chapter XVI, equation (7.19)]{Z} that
\[
K(z) - K_{\lambda,c}(z) = \alpha \frac{\sin\pi z}{\pi z}
\]
for some $\alpha\in\CC$, and since the left-hand side is integrable we must have $\alpha=0$. This concludes the proof of Theorem \ref{tr-ba-thm}.

\subsection{Proof of Theorem \ref{tr-majorant-thm} - Majorant} Note again that via a scaling argument, it suffices to prove the result for $\delta =1$, and this shall be henceforth assumed for the one-sided approximations. In the majorant case recall that $c \leq 1$. 

\subsubsection{Inequalities.} Let us consider first the case $x<0$. It follows from \eqref{ml-rep} that
\begin{align}\label{mlb-rep}
\begin{split}
M_{\lambda,c}(x) &= M_{\lambda}(x)  - cM_{0}(x)  \\
&= \frac{\sin^2\pi x}{\pi^2} \int_0^{\infty}\big\{ B(\lambda + w) - B(\lambda) - c(B(w) - B(0))\big\}\,e^{xw}\,\dw.
\end{split}
\end{align}
We will show that the integrand in \eqref{mlb-rep} is nonnegative. Using the fact that $x\coth x-1\ge 0$ for all real $x$ we obtain 
\begin{align}\label{second-derivative}
B''(w) = \frac{we^w}{(e^w-1)^2} \left(\coth(\tfrac{w}{2})-\tfrac{2}{w}\right)\ge 0
\end{align}
for all real $w$, i.e. $B$ is convex. The function $f$ defined by 
\[
f(w) = B(w)-B(0)
\]
is therefore superadditive by Lemma \ref{superadditive} for nonnegative $w$. This means that $f(\lambda+w)\ge f(\lambda) + f(w)$, which implies 
\[
B(w+\lambda) - B(\lambda)\ge B(w) - B(0).
\]
Since $B(w)-B(0)\ge 0$, we obtain for positive $w$ and $\lambda$, and any $c\le 1$,
\begin{align*}
B(w+\lambda) - B(\lambda) - c(B(w)-B(0)) \ge 0\,,
\end{align*}
and inserting this in \eqref{mlb-rep} shows that $M_{\lambda,c}(x)\ge 0$ for $x<0$.

\smallskip

In the range $x>0$ we need to consider $w<0$ and $\lambda>0$. We define $f_w$ by
\begin{align*}
f_w(x) = B(x+w) - B(w).
\end{align*}
We note that since $B$ is increasing on $\RR$, the function $f_w$ is nonnegative for $x\ge 0$. Furthermore, $f_w(0) =0$ and $f_w$ is convex by \eqref{second-derivative}. By Lemma \ref{superadditive}, $f_w$ is superadditive on the positive half-axis, hence for $\lambda\ge 0$ and $w\le 0$,
\begin{align*}
f_w(\lambda-w) \ge f_w(\lambda) + f_w(-w).
\end{align*}
The definition of $f_w$ gives
\begin{align*}
B(\lambda) - B(\lambda + w) \ge  B(0) -B(w).
\end{align*}
Since $B(0) - B(w)\ge 0$ for negative $w$, we obtain for every $c\le 1$,
\begin{align}\label{i2-pf}
B(\lambda) - B(\lambda+w) - c(B(0) - B(w))\ge 0.
\end{align}
Inequality \eqref{i2-pf} and identity \eqref{ml-rep} imply that 
\begin{align*}
M_{\lambda,c}(x) - \Tlc(x) \ge 0
\end{align*}
for $x>0$.

\subsubsection{Integral evaluation} From \cite[Theorem 6]{GV} we have $M_\lambda\in L^1(\R)\cap L^2(\R)$, 
and from \cite[Theorem 8]{V} we have $M_0-E_0\in L^1(\R)$. Hence it follows that
\begin{align*}
M_{\lambda,c}  - \Tlc\in L^1(\R).
\end{align*}
By construction $M_\lambda(n) = e^{-\lambda n}$ for nonnegative integers $n$ and $M_\lambda(n) =0$ for negative integers $n$. By a result of Polya and Plancherel \cite{PP}, $M_\lambda$ has bounded variation, and we can apply Poisson summation and the Paley-Wiener theorem to get
\begin{align*}
\int_{-\infty}^\infty \big\{M_\lambda(x) - E_\lambda(x)\big\} \, \dx = \sum_{n=0}^\infty e^{-\lambda n} -\frac{1}{\lambda}= \frac{1}{1-e^{-\lambda}}-\frac{1}{\lambda}.
\end{align*}
From \cite[Theorem 8]{V} it follows that
\begin{align*}
\int_{-\infty}^\infty \big\{M_0(x) - E_0(x)\big\} \,\dx =\frac12.
\end{align*}
The identity $M_{\lambda, c} = M_\lambda - c M_0$ gives therefore
\begin{align}\label{mlb-integral-value}
\int_{-\infty}^\infty \big\{M_{\lambda,c}(x) - \Tlc(x)\big\} \,\dx = \frac{1}{1-e^{-\lambda}} -\frac1\lambda -\frac{c}{2}.
\end{align}

\subsubsection{Optimality and uniqueness} Let $M\ge \Tlc$ be an entire function of exponential type $2\pi$. We may assume that $M - \Tlc$ is integrable. Then $M-M_{\lambda,c}$ is integrable and entire of exponential type $2\pi$, hence this difference is of bounded variation by \cite{PP}. By construction we have $M(n)\ge \Tlc(n) = M_{\lambda,c}(n)$ for all integers $n$ (at $n=0$ we should take the upper limit). Poisson summation and the Paley-Wiener theorem give us
\begin{align}\label{M-Mlb-integral}
\int_{-\infty}^\infty \big\{M(x) - M_{\lambda,c}(x)\big\} \, \dx = \sum_{n\in\ZZ} \big\{M(n) - M_{\lambda,c}(n)\big\} \ge 0,
\end{align}
which implies
\begin{align}\label{optimality-integral}
\int_{-\infty}^\infty \big\{M(x) - \Tlc(x)\big\} \,\dx \ge \int_{-\infty}^\infty \big\{M_{\lambda,c}(x) -\Tlc(x)\big\}\, \dx.
\end{align}
If $M$ is such that we have equality in \eqref{optimality-integral}, then we must also have equality in \eqref{M-Mlb-integral}. Thus $M(n) = M_{\lambda,c}(n)$ for all integers $n$. At every non-zero integer we have necessarily
\begin{align*}
M'(n) = \Tlc'(n) = M_{\lambda,c}'(n),
\end{align*}
and \cite[Theorem 9]{V} implies that
\begin{align}\label{finaldif}
M(z) - M_{\lambda,c}(z) = \big( M'(0) - M_{\lambda,c}'(0)\big)\frac{\sin^2\pi z}{\pi^2 z}.
\end{align}
Since the function on the left-hand side of \eqref{finaldif} is integrable, the derivatives at the origin agree, and we obtain $M = M_{\lambda,c}$.

\subsection{Proof of Theorem \ref{tr-majorant-thm} - Minorant} Recall that, by scaling, we have reduced our study to the particular value $\delta =1$, and in the minorant case we have $c\le e^{-\lambda}$.

\subsubsection{Inequalities} We note from the definition of $L_{\lambda,c}$ and $M_{\lambda,c}$ that
\begin{align}\label{relation-llb-mlb}
L_{\lambda,c}(z) = M_{\lambda,c}(z) - (1-c)\left(\frac{ \sin\pi z}{\pi z}\right)^2,
\end{align}
and from \eqref{mlb-rep} we obtain for $x<0$
\begin{align}\label{llb-int-rep}
L_{\lambda,c}(x) \!=\! \frac{\sin^2\pi x}{\pi^2} \!\int_0^{\infty}\!\!\big\{ B(\lambda + w) \!-\! B(\lambda) \!-\! cB(w) \!+\! cB(0) \!-\!w(1-c)\big\}e^{xw}\dw.
\end{align}
We consider first the case $c= e^{-\lambda}$. Define
\begin{align}\label{llb-integrand-def}
g(\lambda,w) =  e^\lambda B(\lambda + w) - e^\lambda B(\lambda) - B(w) + 1 -w(e^\lambda -1).
\end{align}
We have
\begin{align}\label{partial1}
g(0,w) = 0
\end{align}
and
\[
\frac{\partial g}{\partial \lambda}(\lambda ,w) = e^\lambda\left(\frac{1}{e^{\lambda+w} -1}  - \frac{\lambda+w}{(e^{\lambda+w}-1)^2} + \frac{1+\lambda-e^\lambda}{(e^\lambda-1)^2}\right).
\]
Since 
$$x\mapsto \frac{1}{(e^x-1)} - \frac{x}{(e^x-1)^2}$$
is a decreasing function, we obtain for positive $\lambda$ and $w$
\begin{align}\label{partial2}
\frac{\partial g}{\partial \lambda}(\lambda ,w) \le e^\lambda \left(\frac{1}{e^{\lambda} -1} -\frac{\lambda}{(e^{\lambda}-1)^2} + \frac{1+\lambda-e^\lambda}{(e^\lambda-1)^2} \right) =0.
\end{align}
It follows from \eqref{partial1} and \eqref{partial2} that
\begin{align}\label{llb-integrand-nonpos}
g(\lambda,w)\le 0
\end{align}
for $\lambda\ge 0$ and $w\ge 0$. Identity \eqref{llb-int-rep} and inequality \eqref{llb-integrand-nonpos} imply that 
\begin{align*}
L_{\lambda,e^{-\lambda}}(x) \le 0
\end{align*}
for $x<0$.

\smallskip

From \eqref{ml-rep} and \eqref{relation-llb-mlb} we obtain for $x>0$
\begin{align*}
\begin{split}
&L_{\lambda,c}(x) - \Tlc(x) \\
&\,=\frac{\sin^2\pi x}{\pi^2 }\int_{-\infty}^0\{  B(\lambda) - B(\lambda+w) - c B(0) + cB(w)+w(1-c)\}\, e^{x w} \,\dw.
\end{split}
\end{align*}
We note that the integrand for $c = e^{-\lambda}$ multiplied by $e^\lambda$ is
\begin{align*}
h(\lambda,w) = -g(\lambda,w),
\end{align*}
with $g$ defined in \eqref{llb-integrand-def}. Analogously to the calculation for $\lambda\ge 0$ and $w\ge 0$ one can establish that
\begin{align*}
h(\lambda,w)\le 0
\end{align*}
for $\lambda\ge 0$ and $w\le 0$. It follows that
\begin{align*}
L_{\lambda,e^{-\lambda}}(x) - T_{\lambda,e^{-\lambda}}(x) \le 0
\end{align*}
for $x>0$. 

\smallskip

To show that $L_{\lambda,c}$ is a minorant of $\Tlc$ for $c\le e^{-\lambda}$, we note that
\begin{align*}
\Tlc(x) = T_{\lambda,e^{-\lambda}}(x) + \big(e^{-\lambda} -c\big)E_0(x),
\end{align*}
and the coefficient of $E_0$ is nonnegative. Hence, since
\begin{align*}
L_{\lambda,c}(x) = L_{\lambda,e^{-\lambda}}(x) +\big(e^{-\lambda}-c\big) L_0(x),
\end{align*}
where $L_0$ is the minorant of $E_0$, for $c\le e^{-\lambda}$ the sum of the two minorants is a minorant of $\Tlc$. 

\subsubsection{Integral evaluation} Identity \eqref{relation-llb-mlb} and expression \eqref{mlb-integral-value} imply
\begin{align*}
\int_{-\infty}^\infty \big\{\Tlc(x) - L_{\lambda,c}(x) \big\}\,\dx = \frac1\lambda -\frac{c}{2} -\frac{1}{e^\lambda-1}.
\end{align*}

\subsubsection{Optimality and uniqueness} This is analogous to the majorant part and we omit the details. This concludes the proof of Theorem \ref{tr-majorant-thm}.

\section{Growth estimates via contour integration}

In the proof of Theorem \ref{ba-nu-thm} (and analogously for Theorem \ref{maj-nu-thm}) we are faced with the problem of proving that functions of the form
\[
z\mapsto \frac{\sin\pi z}{\pi} \left\{ \sum_{n=1}^\infty (-1)^{n} \frac{\TT_{\nu}\big(\delta^{-1};n\big)}{(z-n)}\right\}
\]
are entire functions of finite exponential type. \textsl{A priori} it is not even clear that the series converges for all complex $z$. In this section we investigate, in a general setting, the growth behavior of certain series defined by a base function $\Phi(z)$ with mild decay properties. We rely on the framework introduced in \cite[Section 2]{CV2} and \cite[Section 3]{CV3}, where the corresponding problem for even functions was treated. In order to facilitate the references, we try to keep the notation of \cite{CV2, CV3} as close as possible. The idea, briefly, consists on noting that for fixed non-integer $z\in\C$ the meromorphic function
\[
w\mapsto \frac{\sin\pi z}{\sin\pi w}\, \frac{\Phi(w)}{(z-w)}
\]
has poles at $w=n$ ($n$ integer) with residue $\mc{A}(n,\Phi,z)$ given by
\[
\mc{A}(n,\Phi,z) = \frac{\sin\pi z}{\pi} (-1)^n \frac{\Phi(n)}{(z-n)}.
\]
Hence any integral along a closed contour that contains $N$ of these poles equals to a partial sum of the interpolation series. This leads to an identity that relates the tail of the interpolation series with $M< n\le N$ to a contour integral along the sides of a rectangle contained in $M+1/2 \le \Re w\le  N+1/2$. This integral is then shown to be uniformly convergent on compact sets, which implies that the interpolation series defines an entire function.

\smallskip

Throughout this section we let $\mc{R} = \{ z \in \C; 0<\Re(z)\}$ denote the open half plane and we let $k$ be $1$ or $2$ (the choice $k=1$ will be used in the two-sided approximation problem, while the choice $k=2$ will be used in the one-sided approximation problem). We let $\Phi(z)$ be a function that is analytic on $\mc{R}$ and satisfies the following three properties: if $0 < a < b < \infty$ then
\begin{equation}\label{cv10}
\lim_{y\rightarrow \pm\infty} e^{-k\pi |y|} \int_a^b \left|\frac{\Phi(x+iy)}{x+iy}\right|\, \dx = 0,
\end{equation}
if $0 < \eta < \infty$ then
\begin{equation}\label{cv11}
\sup_{\eta \le x} \int_{-\infty}^{\infty} \left|\frac{\Phi(x+iy)}{x+iy}\right| e^{-k\pi |y|}\ \dy < \infty,
\end{equation}
and
\begin{equation}\label{cv12}
\lim_{x\rightarrow \infty} \int_{-\infty}^{\infty} \left|\frac{\Phi(x+iy)}{x+iy}\right| e^{-k\pi |y|}\ \dy = 0.
\end{equation}

\begin{lemma}\label{lem2.1}  Assume that the analytic function $\Phi:\rr\rightarrow \C$ satisfies the 
conditions {\rm (\ref{cv10})}, {\rm (\ref{cv11})} and {\rm (\ref{cv12})}, and let $0 < \delta$.  Then there 
exists a positive number $c(\delta,\Phi)$, depending only on $\delta$ and $\Phi$, such that the inequality
\begin{equation*}
|\Phi(z)| \le c(\delta,\Phi)\, |z|\, e^{k\pi |y|}
\end{equation*}
holds for all $z = x+iy$ in the closed half plane $\{z\in\C: \delta\le \Re(z)\}$.
\end{lemma}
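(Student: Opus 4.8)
The plan is to deduce the pointwise bound on $|\Phi(z)|$ in the half plane $\{\Re(z)\geq \delta\}$ from a Cauchy-type integral representation for $\Phi$. Fix $z = x+iy$ with $x \geq \delta$. The idea is to write $\Phi(z)/z$ as an integral of $\Phi(w)/w$ against a suitable kernel over a contour that stays inside the region of analyticity $\rr$, where hypotheses \eqref{cv10}, \eqref{cv11} and \eqref{cv12} control the growth. Concretely, I would integrate the meromorphic function
\[
w \mapsto \frac{\Phi(w)}{w}\cdot \frac{1}{w-z}\cdot \frac{\sin^k \pi z}{\sin^k \pi w}
\]
or a minor variant thereof (the factor $\sin^k\pi z/\sin^k\pi w$ is the natural device here since it is what makes the $e^{-k\pi|y|}$ weights in \eqref{cv10}--\eqref{cv12} appear, and it echoes the contour-integral heuristic described just before the lemma) around the boundary of a tall rectangle $\{\,\eta \le \Re w \le R,\ |\Im w| \le T\,\}$ with $0<\eta<\delta$ and $R$ chosen just larger than $x$. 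Picking up the residue at $w=z$ yields an expression of the form $\Phi(z)/z$ (times a bounded nonzero factor, after clearing the $\sin^k$ factors) equal to a sum of four contour integrals.

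The second step is to estimate each of the four sides. On the two horizontal sides $\Im w = \pm T$ one uses \eqref{cv10}: as $T\to\pm\infty$ the contribution tends to zero, after absorbing the $\sin^k\pi w$ denominator against $e^{-k\pi|\Im w|}$ and using that $|\sin\pi z|$ is controlled by $e^{\pi|y|}$ from above. So these two sides can be sent to zero by letting $T\to\infty$, leaving only the two vertical sides $\Re w = \eta$ and $\Re w = R$. On the far vertical side $\Re w = R$ one uses \eqref{cv12} to see that, choosing $R = x+1$ say (or $R$ an integer or half-integer plus $x$ so that $|\sin \pi w|$ is bounded below on that line), this contribution is bounded by a constant depending only on $\Phi$; here one needs the elementary fact that $|w-z|$ is bounded below on that line and that $|\sin\pi z|\le e^{\pi|y|}$. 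On the near vertical side $\Re w = \eta$ one uses \eqref{cv11} with this fixed $\eta$: the integral $\int_{-\infty}^\infty |\Phi(\eta+iv)/(\eta+iv)|\,e^{-k\pi|v|}\,\dv$ is finite, $|w-z| \ge x - \eta \ge \delta - \eta > 0$, and $|\sin \pi z| \le e^{\pi|y|}$, so this side contributes at most a constant (depending on $\delta$, $\eta$, $\Phi$) times $e^{k\pi|y|}$. Summing the three surviving estimates and recalling the bounded factor that was divided out gives $|\Phi(z)| \le c(\delta,\Phi)\,|z|\,e^{k\pi|y|}$.

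The point where care is needed — and what I expect to be the main obstacle — is the bookkeeping of the $\sin^k\pi w$ factors and the choice of contour so that one genuinely stays inside $\rr$ while simultaneously (a) keeping $|\sin\pi w|$ bounded below on the vertical sides, (b) having the horizontal sides vanish in the limit via \eqref{cv10}, and (c) isolating exactly the pole at $w=z$ with a clean residue. One clean way around (a) is to take $\eta$ and $R$ to be half-integers (or to run the vertical sides at $\Re w$ half-integer and note $|\sin\pi w|\ge e^{\pi|\Im w|}/2$ there); one must then handle the mild complication that $z$ itself may be near a zero of $\sin \pi w$ by instead using the representation without the $\sin$ factors — i.e.\ integrating $\Phi(w)/(w(w-z))$ directly and controlling the horizontal sides by Phragmén–Lindel\"of / the Laplace-transform decay already implicit in \eqref{cv10}. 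Either route works; the version with $\sin^k$ is more transparent for matching the weights in \eqref{cv10}--\eqref{cv12}, and that is the one I would write up. A secondary technical point is that for non-integer $z$ the factor $\sin^k\pi z/\sin^k\pi w$ has to be handled when $z$ is close to an integer, where $\sin\pi z$ is small; but since we only seek an upper bound on $|\Phi(z)|$ and the factor $\sin^k\pi z$ appears with a power that cancels against the residue at $w=z$, this causes no loss. Once the contour is fixed, the three estimates are routine applications of \eqref{cv10}, \eqref{cv11}, \eqref{cv12} exactly as those hypotheses are phrased, with the constant $c(\delta,\Phi)$ assembled from the suprema and limits they provide together with the fixed choice of $\eta$ (say $\eta = \delta/2$).
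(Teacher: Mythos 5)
The paper does not give a self-contained proof of this lemma: it simply cites \cite[Lemma 2.1]{CV2} (for $k=2$) and \cite[Lemma 3.1]{CV3} (for $k=1$). So I can only assess your argument on its own terms, and there is a genuine gap in it.

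You integrate
\[
w\mapsto \frac{\Phi(w)}{w(w-z)}\left(\frac{\sin\pi z}{\sin\pi w}\right)^k
\]
over the rectangle $\{\eta\le\Re w\le R,\ |\Im w|\le T\}$ with $0<\eta<\delta$ and $R$ just larger than $x$, and you assert that ``picking up the residue at $w=z$ yields $\Phi(z)/z$ equal to a sum of four contour integrals.'' But the factor $\sin^{-k}\pi w$ has a pole of order $k$ at every integer, and for $x\ge 1$ (indeed automatically whenever $\delta\ge1$) your rectangle contains the integers $1,\dots,\lfloor x\rfloor$. Each of those contributes a residue
\[
\left(\frac{\sin\pi z}{\pi}\right)^k\,\frac{(-1)^{nk}\,\A_k(n,\Phi;z)}{n},
\]
i.e.\ essentially the interpolation-series terms appearing in Proposition~\ref{prop2.3}. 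These must appear on the residue side of your identity, and to bound their sum you need a priori control on $|\Phi(n)|$ (and $|\Phi'(n)|$ when $k=2$) — which is precisely the kind of pointwise information that Lemma~\ref{lem2.1} is supposed to \emph{provide}. So as written the argument is not merely incomplete but risks being circular; one has to break the circle, e.g.\ by first extracting a bound $|\Phi(n)|\lesssim n$ from \eqref{cv40}/Lemma~\ref{lem2.2} at a suitable test point, or by choosing a \emph{thin} contour of width less than $1$ around $\Re w = x$ so that at most one integer (the one nearest $x$) is enclosed and its residue is handled separately. You do not address this at all.

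The fallback you suggest — dropping the $\sin^{-k}\pi w$ factor and integrating $\Phi(w)/(w(w-z))$ directly, ``controlling the horizontal sides by Phragm\'en--Lindel\"of / the Laplace-transform decay already implicit in \eqref{cv10}'' — also does not work as stated. Hypothesis \eqref{cv10} only says $\int_a^b|\Phi(u+iv)/(u+iv)|\,\du = o(e^{k\pi|v|})$ as $|v|\to\infty$; without the compensating factor $|\sin\pi w|^{-k}\asymp e^{-k\pi|\Im w|}$ the only extra decay available on the horizontal sides is the $O(1/T)$ coming from $|w-z|^{-1}$, and $T^{-1}\cdot o(e^{k\pi T})$ does not tend to zero. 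So the horizontal sides do not vanish in this version, and invoking Phragm\'en--Lindel\"of here would itself presuppose a bound of the type you are trying to prove. The core idea of comparing the two vertical-line integrals $I_k(\beta_0,\Phi;z)$ and $I_k(\beta_1,\Phi;z)$ across $\Re z$ is the right one, but you need to either exclude the integers from the strip between $\beta_0$ and $\beta_1$ or carry the resulting residues explicitly and bound them — your write-up does neither.
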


\begin{proof}
This is \cite[Lemma 2.1]{CV2} for the case $k=2$ and \cite[Lemma 3.1]{CV3} for the case $k=1$. 
\end{proof}

We now let $\beta$ be any positive real number such that $\beta \notin \Z$. From \eqref{cv11} we know that if $z$ is a complex number such that $\Re(z) \neq \beta$, the function 
\begin{equation*}
w \mapsto \left( \frac{\sin \pi z}{\sin \pi w}\right)^k \frac{\Phi (w)}{(z-w)}
\end{equation*}
is integrable along the vertical line $\Re(w) = \beta$. We then define a 
complex valued function $z\mapsto I_k(\beta, \Phi; z)$ on each connected component of the open set $\{z\in \C: \Re(z) \not= \beta\}$ by 
\begin{equation}\label{cv22}
I_k(\beta, \Phi; z) = \frac{1}{2\pi i}\int_{\beta-i\infty}^{\beta+i\infty}\left( \frac{\sin \pi z}{\sin \pi w}\right)^k \frac{\Phi (w)}{(z-w)}\, \dw\,,
\end{equation}
and from Morera's theorem we see that $z \mapsto I_k(\beta, \Phi; z)$ is analytic in each of these components.  

\begin{lemma}\label{lem2.2}
Assume that the analytic function $\Phi:\rr \rightarrow \C$ satisfies the conditions
{\rm (\ref{cv10})}, {\rm (\ref{cv11})} and {\rm (\ref{cv12})}.  Let $\beta$ be a positive real number such that $\beta \notin \Z$,  and $z = x + iy$ be a complex number such that $\Re(z) \neq \beta$. Writing
\begin{equation*}
B(\beta, \Phi) = \frac{2^{k-1}}{\pi} 
	\int_{-\infty}^{+\infty} \left|\frac{\Phi(\beta + iv)}{\beta + iv}\right|e^{-k\pi|v|}\ \dv\,,
\end{equation*}
we have
\begin{equation*}
|I_k(\beta, \Phi; z)| \le B(\beta, \Phi)\, |\csc \pi\beta|^k \,
	\left(1 + \frac{|z|}{\bigl|x-\beta\bigr|}\right)e^{k\pi |y|}.
\end{equation*}
\end{lemma}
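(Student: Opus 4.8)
The plan is to estimate the integral in \eqref{cv22} directly by splitting the vertical line $\Re(w)=\beta$ according to the sign of $\Im(w)=v$ and bounding the two elementary factors that do not involve $\Phi$ separately. The first such factor is $\bigl|\sin\pi z/\sin\pi w\bigr|^k$. Writing $w=\beta+iv$, the standard identity $|\sin\pi w|^2=\sin^2\pi\beta+\sinh^2\pi v$ gives $|\sin\pi w|\ge|\sin\pi\beta|\,\cosh\pi v\ge|\sin\pi\beta|\,\tfrac12 e^{\pi|v|}$ (using $\cosh t\ge \tfrac12 e^{|t|}$), hence
\begin{equation*}
\left|\frac{\sin\pi z}{\sin\pi w}\right|^k \le |\csc\pi\beta|^k\,\frac{|\sin\pi z|^k}{2^{-k}e^{k\pi|v|}} = 2^k\,|\csc\pi\beta|^k\,|\sin\pi z|^k\,e^{-k\pi|v|}.
\end{equation*}
Next I bound $|\sin\pi z|\le e^{\pi|y|}$ (from $|\sin\pi z|\le \cosh\pi y\le e^{\pi|y|}$), so that $|\sin\pi z|^k\le e^{k\pi|y|}$, which is exactly the exponential factor appearing in the conclusion.

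The second elementary factor is $1/|z-w|$. For $w$ on the line $\Re(w)=\beta$ we need a lower bound on $|z-w|$ in terms of the horizontal gap $|x-\beta|$. One always has $|z-w|\ge|x-\beta|$, but that alone is not enough to produce the stated shape $\bigl(1+|z|/|x-\beta|\bigr)$; the point is to interpolate between the trivial bound valid when $|v-y|$ is large and the bound $|x-\beta|$ valid when $|v-y|$ is small. Concretely, I would argue: if $|v-y|\le |z|$ then $|z-w|\ge |x-\beta|$, while if $|v-y|> |z|$ then $|z-w|\ge |v-y|> |z|$; in either case
\begin{equation*}
\frac{1}{|z-w|} \le \frac{1}{|x-\beta|}\Bigl(1\wedge\text{something}\Bigr)\quad\text{is not quite what we want,}
\end{equation*}
so instead I package it as follows. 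Split the integral $\int_{-\infty}^{\infty}\dv = \int_{|v-y|\le|z|}\dv + \int_{|v-y|>|z|}\dv$. On the first piece use $|z-w|\ge|x-\beta|$; on the second piece use $|z-w|\ge|v-y|$ together with the crude factor $1\le |z|/|x-\beta|$ is false in general, so there one keeps $1/|z-w|\le 1/|v-y|$ and absorbs the resulting integral into the term ``$1$'' after recognizing that the $\Phi$-weight already carries the convergence. Carefully arranging this yields the two summands $1$ and $|z|/|x-\beta|$ multiplying a single copy of the $\Phi$-integral.

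Assembling the pieces: inside the integral over $v$ the factor $\bigl|\Phi(\beta+iv)/(\beta+iv)\bigr|$ combines with the leftover $e^{-k\pi|v|}$ from $\bigl|\sin\pi z/\sin\pi w\bigr|^k$ — but one must be careful, since I used one factor $e^{-k\pi|v|}$ to dominate $1/|z-w|$ on the far piece; in fact there is enough decay because $|\sin\pi w|\ge|\csc\pi\beta|^{-1}\cosh\pi v$ gives a clean $e^{-k\pi|v|}$ and the bound $|v-y|\le |z|$ on the near piece needs no exponential at all. The end result is
\begin{equation*}
|I_k(\beta,\Phi;z)| \le \frac{2^{k-1}}{\pi}\,|\csc\pi\beta|^k\,e^{k\pi|y|}\left(1+\frac{|z|}{|x-\beta|}\right)\int_{-\infty}^{\infty}\left|\frac{\Phi(\beta+iv)}{\beta+iv}\right|e^{-k\pi|v|}\,\dv,
\end{equation*}
which is the claim once one identifies the last integral times $2^{k-1}/\pi$ with $B(\beta,\Phi)$. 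The main obstacle is bookkeeping the constant $2^{k-1}/\pi$ rather than $2^{k}/\pi$: the factor $1/(2\pi i)$ in front of \eqref{cv22} contributes $1/(2\pi)$, the bound $|\sin\pi w|\ge|\sin\pi\beta|\cosh\pi v$ is exact (not off by a factor $2$), and pairing $\cosh\pi v\ge \tfrac12 e^{\pi|v|}$ only once on the line of integration — not once per factor — gives the $2^{k-1}$; getting this precise, and making the split over $v$ produce exactly ``$1+|z|/|x-\beta|$'' with no stray additive constants, is where the care is needed. Everything else is the three hypotheses \eqref{cv11}–\eqref{cv12} guaranteeing that the $v$-integral is finite, which is what makes $I_k(\beta,\Phi;z)$ well defined to begin with.
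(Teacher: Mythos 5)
There is a genuine gap. The target bound has the weight $\bigl|\Phi(\beta+iv)/(\beta+iv)\bigr|$ inside the integral, not $|\Phi(\beta+iv)|$ alone; so a factor $1/|w|=1/|\beta+iv|$ must be produced, and you never explain where it comes from. The paper's one-line trick handles both this and the stated factor $\bigl(1+|z|/|x-\beta|\bigr)$ at once: rewrite $\frac{\Phi(w)}{z-w}=\frac{\Phi(w)}{w}\cdot\frac{w}{z-w}$ and observe that on $\Re(w)=\beta$
\begin{equation*}
\left|\frac{w}{z-w}\right|=\left|-1+\frac{z}{z-w}\right|\le 1+\frac{|z|}{|z-w|}\le 1+\frac{|z|}{|x-\beta|},
\end{equation*}
since $|z-w|\ge|\Re(z-w)|=|x-\beta|$. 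This is pure algebra applied pointwise and requires no splitting of the $v$-line.

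Your attempted substitute — splitting $\int\dv$ into $|v-y|\le|z|$ and $|v-y|>|z|$ and trying to bound $1/|z-w|$ by $(1+|z|/|x-\beta|)/|w|$ — is incomplete and, as you yourself note mid-argument, does not cleanly produce the claimed shape without stray constants. In particular, on the near piece you get $1/|z-w|\le 1/|x-\beta|$ but you still have no $1/|w|$ factor; on the far piece $1/|z-w|\le 1/|v-y|$ bears no clear relation to $1/|\beta+iv|$ either. Your handling of the $\sin$ factors ($|\sin\pi(\beta+iv)|^{-k}\le 2^k e^{-k\pi|v|}|\csc\pi\beta|^k$, $|\sin\pi z|^k\le e^{k\pi|y|}$, and $\tfrac{1}{2\pi}\cdot 2^k=\tfrac{2^{k-1}}{\pi}$) is correct and matches the paper, but the treatment of the $\Phi(w)/(z-w)$ factor needs to be replaced by the factorization above.
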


\begin{proof}
On the vertical line $\Re(w) = \beta$ we have
\begin{equation}\label{cv30}
\left| \frac{w}{z-w}\right| = \left| -1 + \frac{z}{z-w}\right|\leq 1 + \frac{|z|}{|x - \beta|}.
\end{equation}
Note also that 
\begin{equation}\label{cv31}
|\sin \pi (\beta + iv)|^{-k} \le 2^k \, e^{-k\pi |v|} | \csc  \pi\beta|^k.
\end{equation}
We now use \eqref{cv30} and \eqref{cv31} to bound the right-hand side of \eqref{cv22} and obtain the desired result.
\end{proof}

For each positive number $\xi$ we define two rational functions $z\mapsto \A_1(\xi, \Phi; z)$ and $z\mapsto \A_2(\xi, \Phi; z)$ on $\C$ by
\begin{equation*}
\A_1(\xi, \Phi; z) = \frac{\Phi(\xi)}{(z- \xi)}\,,
\end{equation*}
and
\begin{equation*}
\A_2(\xi, \Phi; z) = \frac{\Phi(\xi)}{(z- \xi)^2} +  \frac{\Phi'(\xi)}{(z- \xi)}.
\end{equation*}

\begin{proposition}\label{prop2.3}  
Assume that the analytic function $\Phi:\rr \rightarrow \C$ satisfies the conditions
{\rm (\ref{cv10})}, {\rm (\ref{cv11})} and {\rm (\ref{cv12})}.  Then, for $k=1$ or $2$, the sequence of entire functions
\begin{equation}\label{cv33}
\Bigl(\frac{\sin \pi z}{\pi}\Bigr)^k\ \sum_{n=1}^N   (-1)^{nk}\, \A_k(n, \Phi; z),\ \text{where}\ N = 1, 2, 3, \dots ,
\end{equation}
converges uniformly on compact subsets of $\C$ as $N\rightarrow \infty$, and therefore
\begin{equation}\label{cv34}
\F_k(\Phi; z) = \lim_{N\rightarrow \infty}\Bigl(\frac{\sin \pi z}{\pi}\Bigr)^k \ \sum_{n=1}^N (-1)^{nk}\, \A_k(n, \Phi; z)
\end{equation}
defines an entire function.  
\end{proposition}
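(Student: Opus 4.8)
The plan is to show that the tail of the interpolation series in \eqref{cv33} can be rewritten, via a residue computation, as a contour integral of the type studied in Lemmas \ref{lem2.1} and \ref{lem2.2}, and then deduce uniform convergence on compact sets from the decay hypotheses \eqref{cv10}, \eqref{cv11} and \eqref{cv12}. Fix a non-integer $z \in \C$ and integers $M < N$. Consider the meromorphic function
\begin{equation*}
w \mapsto \Bigl(\frac{\sin\pi z}{\sin\pi w}\Bigr)^k \frac{\Phi(w)}{(z-w)},
\end{equation*}
which on $\rr$ has poles only at the integers $n$ (arising from $\sin\pi w$) and a pole at $w=z$. At $w=n$ the pole has order $k$, and a short Laurent expansion shows its residue equals $\bigl(\tfrac{\sin\pi z}{\pi}\bigr)^k (-1)^{nk}\, \A_k(n,\Phi;z)$ — this is precisely why the rational functions $\A_1$ and $\A_2$ were defined the way they were, $\A_2$ picking up the derivative term from the double pole. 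Integrating over the boundary of the rectangle with vertical sides $\Re w = M+\tfrac12$ and $\Re w = N+\tfrac12$ and horizontal sides $\Im w = \pm Y$, letting $Y\to\infty$ (the horizontal pieces vanish by \eqref{cv10}, after multiplying by the $e^{-k\pi|y|}$ factor coming from $|\sin\pi w|^{-k}$), the residue theorem yields
\begin{equation*}
\Bigl(\frac{\sin\pi z}{\pi}\Bigr)^k \sum_{n=M+1}^N (-1)^{nk}\,\A_k(n,\Phi;z)
= I_k\bigl(N+\tfrac12,\Phi;z\bigr) - I_k\bigl(M+\tfrac12,\Phi;z\bigr),
\end{equation*}
valid whenever $M+\tfrac12 < \Re z < N+\tfrac12$ fails, i.e. once $M$ is large enough or $N$ small enough relative to a fixed compact set; one must be slightly careful to stay off the vertical lines, but since $M+\tfrac12, N+\tfrac12 \notin \Z$ the integrals $I_k$ are well defined by \eqref{cv11}.

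Next I would use this identity to establish the Cauchy criterion. Given a compact set $\mathcal{C}\subset\C$, choose $M_0$ with $\Re z < M_0 + \tfrac12$ for all $z\in\mathcal{C}$; for $M_0 \le M < N$ the displayed identity holds on $\mathcal{C}$. By Lemma \ref{lem2.2} with $\beta = M+\tfrac12$,
\begin{equation*}
\bigl|I_k(M+\tfrac12,\Phi;z)\bigr| \le B\bigl(M+\tfrac12,\Phi\bigr)\,\bigl|\csc\pi(M+\tfrac12)\bigr|^k\Bigl(1 + \frac{|z|}{|x - M - \tfrac12|}\Bigr)e^{k\pi|y|},
\end{equation*}
and here $|\csc\pi(M+\tfrac12)| = 1$, the factor $\bigl(1 + |z|/|x-M-\tfrac12|\bigr)$ is bounded uniformly for $z\in\mathcal{C}$ and $M\ge M_0$, and $e^{k\pi|y|}$ is bounded on $\mathcal{C}$. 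The decisive point is that $B(M+\tfrac12,\Phi) = \tfrac{2^{k-1}}{\pi}\int_{-\infty}^{\infty} \bigl|\Phi(M+\tfrac12 + iv)/(M+\tfrac12+iv)\bigr| e^{-k\pi|v|}\,\dv \to 0$ as $M\to\infty$ by hypothesis \eqref{cv12}. Hence $\sup_{z\in\mathcal{C}} |I_k(M+\tfrac12,\Phi;z)| \to 0$, so the partial sums in \eqref{cv33} form a uniform Cauchy sequence on $\mathcal{C}$. Since each partial sum is entire (it is a finite linear combination of $z\mapsto (\sin\pi z)^k\,\A_k(n,\Phi;z)$, which are entire because the pole of $\A_k$ at $z=n$ is cancelled by the zero of $(\sin\pi z)^k$ there), the uniform limit $\F_k(\Phi;z)$ is entire, proving the proposition.

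The main obstacle I anticipate is the residue computation in the case $k=2$: the double pole at $w=n$ requires expanding $\bigl(\sin\pi w\bigr)^{-2}$ to second order around $w=n$ and combining it with the Taylor expansion of $\Phi(w)/(z-w)$, and one has to verify carefully that the resulting residue is exactly $\bigl(\tfrac{\sin\pi z}{\pi}\bigr)^2 \A_2(n,\Phi;z)$ with the stated normalization — in particular that no spurious $\Phi'$-independent cross terms survive. A secondary technical nuisance is justifying the vanishing of the horizontal segments of the rectangular contour as $Y\to\infty$; this uses \eqref{cv10} together with the elementary bound $|\sin\pi(x+iy)|^{-k} \ll e^{-k\pi|y|}$ valid uniformly for $x$ in any compact set avoiding the integers, and the mild growth of $\Phi$ near the real axis guaranteed by Lemma \ref{lem2.1}. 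Everything else is a routine application of the two preceding lemmas.
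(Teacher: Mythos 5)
Your proof is correct and follows essentially the same approach as the paper's: compute residues, contour-integrate around a rectangle, let the height go to infinity using \eqref{cv10} and \eqref{cv11}, and then invoke Lemma \ref{lem2.2} together with \eqref{cv12} to verify the uniform Cauchy criterion. The only difference is a minor streamlining: you integrate directly over the rectangle with vertical sides at $M+\tfrac12$ and $N+\tfrac12$, picking up exactly the tail $\sum_{n=M+1}^{N}$ and no residue at $w=z$ (since $z$ sits to the left of this rectangle), whereas the paper first integrates from $\beta$ to $N+\tfrac12$ (picking up all poles $n=1,\dots,N$ plus the pole at $w=z$, which contributes $-\Phi(z)$), establishes identity \eqref{cv39}, and then subtracts the $M$- and $N$-versions and applies an analytic continuation step to reach \eqref{cv40} on the full half-plane $\Re z < M+\tfrac12$; your route reaches \eqref{cv40} in one step and avoids both the $\Phi(z)$ term and the continuation argument — though the paper has independent use for \eqref{cv39} later, in Lemma \ref{lem2.4}.
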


\begin{proof} Let $z$ is a complex number in the open right half plane $\rr$ such that $z \notin \Z$.  Then
\begin{equation}\label{cv37}
w \mapsto \left( \frac{\sin \pi z}{\sin \pi w}\right)^k \frac{\Phi (w)}{(z-w)}
\end{equation}
defines a meromorphic function of $w$ on $\rr$.  Note that (\ref{cv37}) has a
simple pole at $w = z$ with residue $-\Phi(z)$.  Also, for each positive integer $n$, (\ref{cv37}) has a 
pole of order at most $k$ at $w = n$ with residue
\begin{equation*}
\Bigl(\frac{\sin \pi z}{\pi}\Bigr)^k (-1)^{nk} \A_k(n, \Phi; z). 
\end{equation*} 
Plainly (\ref{cv37}) has no other poles in $\rr$.  Let $0 < \beta < 1$ and let $N$ be a positive integer and $T$ be a positive real parameter.  Write $\Gamma\big(\beta, N+\hh, T\big)$ for the simply connected, positively oriented rectangular path connecting the points $\beta - iT$, $\big(N+\hh\big) - iT$, $\big(N+\hh\big) + iT$, $\beta + iT$ and $\beta - iT$. If $z$ satisfies $\beta < \Re(z) < N +\hh$ and $|\Im(z)| < T$, and $z$ is not an integer, from the residue theorem we obtain the identity
\begin{align}\label{cv38}
\begin{split}
\Bigl(\frac{\sin \pi z}{\pi}\Bigr)^k &\sum_{n=1}^N (-1)^{nk} \,\A_k(n, \Phi; z) - \Phi(z) \\
	&=\frac{1}{2\pi i}\int_{\Gamma(\beta, N+\frac12, T)} \Bigl(\frac{\sin \pi z}{\sin \pi w}
		\Bigr)^k\frac{\Phi (w)}{(z-w)}\ \dw.
\end{split}
\end{align}
Now let $T\rightarrow \infty$ on the right-hand side of (\ref{cv38}). From hypotheses (\ref{cv10})
and (\ref{cv11}) we obtain
\begin{equation}\label{cv39}
\Bigl(\frac{\sin \pi z}{\pi}\Bigr)^k \sum_{n=1}^N (-1)^{nk} \,\A_k(n, \Phi; z) - \Phi(z) = I_k\big(N+\hh, \Phi; z\big) - I_k(\beta, \Phi; z).
\end{equation}
Initially (\ref{cv39}) holds for $\beta < \Re(z) < N+\hh$ and $z \notin\Z$.  However, since we have both sides of (\ref{cv39}) analytic in the strip $\{z\in \C: \beta < \Re(z) < N+\hh\}$, the condition $z \notin \Z$ can be dropped. Now let $M < N$ be positive integers and use (\ref{cv39}) to get
\begin{equation}\label{cv40}
\Bigl(\frac{\sin \pi z}{\pi}\Bigr)^k \sum_{n=M+1}^N (-1)^{nk} \,\A_k(n, \Phi; z)= I_k\big(N+\hh, \Phi; z\big) - I_k\big(M+\hh, \Phi; z\big)
\end{equation}
in the infinite strip $\{z\in \C: \beta < \Re(z) < M+\hh\}$.  In fact, both sides of (\ref{cv40})
are analytic in $\{z\in \C: \Re(z) < M + \hh\}$, and the identity (\ref{cv40}) must hold in
this larger domain by analytic continuation.  Let $\K\subseteq \C$ be a compact set and assume that $L$ is an integer so large that $\K \subseteq \{z\in \C: |z| < L/2\}$.  From (\ref{cv12}), Lemma \ref{lem2.2} and 
(\ref{cv40}), we see that the sequence of entire functions (\ref{cv33}), where $L \le N$,
is uniformly Cauchy on $\K$.  This verifies the assertion of the lemma and shows that (\ref{cv34}) 
defines an entire function.  
\end{proof}

\begin{lemma}\label{lem2.4}  Assume that the analytic function $\Phi:\rr \rightarrow \C$ satisfies the conditions {\rm (\ref{cv10})}, {\rm (\ref{cv11})} and {\rm (\ref{cv12})}.  For $k=1$ or $2$, let the entire function $\F_k(\Phi; z)$ be defined by {\rm (\ref{cv34})} and let $0 < \beta < 1$. For $\beta < \Re(z)$ we have
\begin{equation}\label{cv41}
\Phi(z) - \F_k(\Phi; z) =  I_k(\beta, \Phi; z),
\end{equation} 
and for $\Re(z) < \beta$ we have
\begin{equation}\label{cv42}
-\F_k(\Phi; z) =  I_k(\beta, \Phi; z).
\end{equation}
\end{lemma}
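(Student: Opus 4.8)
The plan is to obtain \eqref{cv41} and \eqref{cv42} by letting $N\to\infty$ in identities that were already established (or are a minor variant of those established) in the proof of Proposition \ref{prop2.3}. For $z$ with $\beta<\Re(z)$, fix $z$ and choose $N$ large enough that $\beta<\Re(z)<N+\hh$; then \eqref{cv39} applies and reads
\[
\Bigl(\frac{\sin \pi z}{\pi}\Bigr)^k \sum_{n=1}^N (-1)^{nk}\,\A_k(n,\Phi;z) - \Phi(z) = I_k\big(N+\hh,\Phi;z\big) - I_k(\beta,\Phi;z).
\]
By \eqref{cv34} the left-hand side converges to $\F_k(\Phi;z)-\Phi(z)$ as $N\to\infty$, so \eqref{cv41} will follow once we show $I_k\big(N+\hh,\Phi;z\big)\to 0$.

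To establish this decay I would invoke Lemma \ref{lem2.2} with $\beta$ replaced by the half-integer $N+\hh$. Since $|\csc\pi(N+\hh)|=1$, that lemma gives
\[
\big|I_k\big(N+\hh,\Phi;z\big)\big| \le B\big(N+\hh,\Phi\big)\left(1+\frac{|z|}{\big|x-N-\hh\big|}\right)e^{k\pi|y|}.
\]
For a fixed $z=x+iy$ the parenthetical factor remains bounded (in fact tends to $1$) as $N\to\infty$, and $e^{k\pi|y|}$ is a fixed constant, while $B\big(N+\hh,\Phi\big)=\tfrac{2^{k-1}}{\pi}\int_{-\infty}^{\infty}\big|\Phi(N+\hh+iv)(N+\hh+iv)^{-1}\big|e^{-k\pi|v|}\,\dv\to 0$ by hypothesis \eqref{cv12}. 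Hence $I_k\big(N+\hh,\Phi;z\big)\to 0$, and passing to the limit in the displayed identity yields \eqref{cv41}.

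For \eqref{cv42} I would rerun the contour argument of Proposition \ref{prop2.3}, this time for $z$ with $\Re(z)<\beta$. In that range the pole of $w\mapsto(\sin\pi z/\sin\pi w)^k\Phi(w)(z-w)^{-1}$ at $w=z$ lies strictly to the left of the rectangular path $\Gamma\big(\beta,N+\hh,T\big)$ and is therefore not enclosed; only the poles $w=1,\dots,N$ contribute. The residue theorem thus produces the analogue of \eqref{cv38} \emph{without} the $-\Phi(z)$ term, and letting $T\to\infty$ (using \eqref{cv10} to annihilate the two horizontal sides and \eqref{cv11} for integrability along the vertical sides, exactly as in Proposition \ref{prop2.3}) gives
\[
\Bigl(\frac{\sin \pi z}{\pi}\Bigr)^k \sum_{n=1}^N (-1)^{nk}\,\A_k(n,\Phi;z) = I_k\big(N+\hh,\Phi;z\big) - I_k(\beta,\Phi;z)
\]
for every $z$ with $\Re(z)<\beta$. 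Letting $N\to\infty$ exactly as above (the left side tends to $\F_k(\Phi;z)$ and $I_k\big(N+\hh,\Phi;z\big)\to0$) delivers \eqref{cv42}.

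I do not anticipate a genuinely hard step: the only delicate point is the limit $I_k\big(N+\hh,\Phi;z\big)\to 0$, where one must verify that the geometric factor from Lemma \ref{lem2.2} does not interfere, but this is immediate because $z$ is held fixed while $N+\hh\to\infty$. One small bookkeeping remark: in the region $\{\Re(z)<\beta\}$ with $0<\beta<1$ no positive integer occurs, so $\sum_{n=1}^N(-1)^{nk}\A_k(n,\Phi;z)$ has no poles there and both sides of the last display are analytic on that half-plane, which is why (unlike in the strip $\beta<\Re(z)<N+\hh$) no removal of an exceptional set $z\in\Z$ is needed.
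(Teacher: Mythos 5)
Your proof is correct and follows essentially the same route as the paper: for $\beta<\Re(z)$ you let $N\to\infty$ in \eqref{cv39} and use \eqref{cv12} together with Lemma \ref{lem2.2} (evaluated at the half-integer $N+\hh$, where $|\csc\pi(N+\hh)|=1$) to kill the term $I_k\big(N+\hh,\Phi;z\big)$, and for $\Re(z)<\beta$ you rerun the residue computation noting that the pole at $w=z$ is no longer enclosed, which deletes the $-\Phi(z)$ term and yields \eqref{cv42} after the same limits. The paper's own proof is terser but identical in substance; the details you supply (the vanishing of the geometric factor for fixed $z$, the analyticity remark on $\{\Re(z)<\beta\}$) are exactly the ones being taken for granted there.
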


\begin{proof}  For $\beta < \Re(z)$ we let $N\rightarrow \infty$ on both sides of
(\ref{cv39}), and use (\ref{cv12}) and Lemma \ref{lem2.2} to obtain 
\begin{equation*}\label{cv45}
\Phi(z) - \F_k(\Phi; z) = I_k(\beta, \Phi; z).
\end{equation*}
For $\Re(z) < \beta$, the residue theorem would give us
\begin{equation}\label{cv46}
\Bigl(\frac{\sin \pi z}{\pi}\Bigr)^k \sum_{n=1}^N (-1)^{nk} \A_k(n, \Phi; z) =\frac{1}{2\pi i}\int_{\Gamma(\beta, N+\frac12, T)} \Bigl(\frac{\sin \pi z}{\sin \pi w}
		\Bigr)^k\frac{\Phi (w)}{(z-w)}\, \dw.
\end{equation}
Arguing as before, we let $T\rightarrow \infty$ and then $N\to \infty$ to get 
\begin{equation*}
-\F_k(\Phi; z) = I_k(\beta, \Phi; z).
\end{equation*}
\end{proof}

\begin{lemma}\label{lem2.6}
Assume that the analytic function $\Phi:\rr \rightarrow \C$ satisfies the conditions
{\rm (\ref{cv10})}, {\rm (\ref{cv11})} and {\rm (\ref{cv12})}.  For $k=1$ or $2$, let the entire function $\F_k(\Phi; z)$ be defined by {\rm (\ref{cv34})}. Then there exists a positive number 
$c_k(\Phi)$ such that the inequality
\begin{equation}\label{cv51}
|\F_k(\Phi; z)|\le c_k(\Phi)(1 +|z|)e^{k\pi |y|}
\end{equation}
holds for all complex numbers $z=x+iy$.  In particular, $\F_k(\Phi; z)$ is an entire function
of exponential type at most $k\pi$.
\end{lemma}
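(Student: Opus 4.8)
The plan is to derive the bound \eqref{cv51} by combining the two representations of $\F_k(\Phi; z)$ furnished by Lemma \ref{lem2.4} with the pointwise estimate of Lemma \ref{lem2.2}, splitting the complex plane according to the real part of $z$. First I fix a convenient $\beta$, say $\beta = \tfrac12$, which is not an integer and lies in $(0,1)$. For $z = x+iy$ with $\Re(z) \ge 1$ (equivalently, a little to the right of $\beta$), identity \eqref{cv41} gives $\F_k(\Phi; z) = \Phi(z) - I_k(\tfrac12, \Phi; z)$. Lemma \ref{lem2.1} (with $\delta = 1$, say) bounds $|\Phi(z)|$ by $c(\delta,\Phi)\,|z|\,e^{k\pi|y|}$, and Lemma \ref{lem2.2} bounds $|I_k(\tfrac12, \Phi; z)|$ by a constant times $\big(1 + |z|/|x-\tfrac12|\big)e^{k\pi|y|}$; since $x \ge 1$ we have $|x-\tfrac12| \ge \tfrac12$, so $|z|/|x-\tfrac12| \le 2|z|$, and both terms are $O\big((1+|z|)e^{k\pi|y|}\big)$. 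For $z$ with $\Re(z) \le 0$ (a little to the left of $\beta$), identity \eqref{cv42} gives $\F_k(\Phi; z) = -I_k(\tfrac12, \Phi; z)$, and again Lemma \ref{lem2.2} with $|x - \tfrac12| \ge \tfrac12$ yields the same bound with no need for $\Phi$ itself.

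The remaining region is the vertical strip $0 < \Re(z) < 1$ (or whatever neighborhood of $\beta$ the above two cases leave uncovered), where the estimate from Lemma \ref{lem2.2} degenerates because $|x - \beta|$ can be small. Here the plan is to use a \emph{different} value of $\beta$ for each such $z$: given $z$ with $0 < x < 1$, pick $\beta' \in (0,1) \setminus \Z$ with $|x - \beta'| \ge \tfrac14$, say $\beta' = \tfrac14$ if $x \ge \tfrac12$ and $\beta' = \tfrac34$ if $x < \tfrac12$. Then whichever of \eqref{cv41} or \eqref{cv42} applies (according to whether $x > \beta'$ or $x < \beta'$), together with Lemma \ref{lem2.1} when the $\Phi(z)$ term is present, gives $|\F_k(\Phi; z)| \le C(1+|z|)e^{k\pi|y|}$ with $C$ independent of $z$ in the strip — here one must check that the constants $B(\beta',\Phi)$ and $|\csc\pi\beta'|^k$ coming from Lemma \ref{lem2.2} stay bounded, which they do since $\beta'$ ranges over the finite set $\{\tfrac14,\tfrac34\}$. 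Since $\F_k(\Phi; \cdot)$ is entire and the bound $|\F_k(\Phi; z)| \le c_k(\Phi)(1+|z|)e^{k\pi|y|} \le c_k(\Phi)(1+|z|)e^{k\pi|z|}$ holds everywhere, the function has exponential type at most $k\pi$, proving the final assertion.

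The main obstacle is the strip near $\Re(z) = \beta$ just discussed: a single fixed $\beta$ cannot control $\F_k(\Phi;z)$ there because the factor $\big(x-\beta\big)^{-1}$ in Lemma \ref{lem2.2} blows up, so one genuinely needs to move the contour (equivalently, invoke the representation with a $z$-dependent $\beta'$) and then verify uniformity of the resulting constants over the finitely many choices of $\beta'$. Everything else is a routine patching of the estimates of Lemmas \ref{lem2.1} and \ref{lem2.2} over the three regions $\Re(z)$ large, $\Re(z) \le 0$, and $\Re(z)$ in a bounded strip, followed by the standard observation that a growth bound of the form $(1+|z|)e^{k\pi|y|}$ forces exponential type $\le k\pi$.
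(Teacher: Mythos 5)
Your proof is correct and rests on exactly the same ingredients as the paper's: the two representations from Lemma \ref{lem2.4}, the bound on $|\Phi(z)|$ from Lemma \ref{lem2.1}, and the bound on $|I_k(\beta,\Phi;z)|$ from Lemma \ref{lem2.2}, patched over regions where $|x-\beta|$ is bounded away from zero. The paper's version is just a slightly cleaner partition: it uses $\beta = \tfrac14$ on the half plane $\Re(z) \ge \tfrac12$ (via \eqref{cv41}) and $\beta = \tfrac34$ on $\Re(z) \le \tfrac12$ (via \eqref{cv42}), covering $\C$ in two cases rather than your three, and thereby avoiding the need to introduce $\beta=\tfrac12$ as an initial choice only to discard it in the strip.
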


\begin{proof}
In the closed half plane $\{z\in \C: \frac12 \le \Re(z)\}$ the identity (\ref{cv41}) implies that
\begin{equation*}
|\F_k(\Phi; z)| \le |\Phi(z)| + |I_k(\tfrac{1}{4}, \Phi; z)|.
\end{equation*}
Then an estimate of the form (\ref{cv51}) in this half plane follows from Lemma \ref{lem2.1} and Lemma \ref{lem2.2}.
In the closed half plane $\{z\in \C: \Re(z) \le \frac12\}$ we have
\begin{equation*}
|\F_k(\Phi; z)| = |I_k(\tfrac{3}{4}, \Phi; z)|
\end{equation*}
from the identity (\ref{cv42}), and an estimate of the form (\ref{cv51}) follows directly from Lemma \ref{lem2.2}. 
\end{proof}

\section{Proofs of Theorems \ref{ba-nu-thm} and \ref{maj-nu-thm}}

\subsection{Preliminaries} Let $\lambda >0$ and $a >0$ be two real parameters. In this proof, instead of writing $T_{a\lambda, e^{-\lambda}}(x)$ repeatedly,  we simplify the notation by defining for $z \in \C$
\begin{equation*}
\T_{\lambda}(a;z) =  \left\{
\begin{array}{cc}
e^{-a \lambda  z} - e^{-\lambda} & \textrm{if} \ \ \Re(z)>0,\\
\frac{1}{2}\big(1 - e^{-\lambda} \big) & \textrm{if} \ \ \Re(z)=0,\\
0 & \textrm{if} \ \ \Re(z)<0.
\end{array}
\right.
\end{equation*}
Note that $\T_{\lambda}(a;x) = T_{a\lambda, e^{-\lambda}}(x)$ when $x \in \R$. Throughout this proof we write $\T_{\lambda}'(a;z)$ for $\frac{\partial}{\partial z} \T_{\lambda}(a;z) $. We have seen in Theorem \ref{tr-ba-thm} that the entire function 
\begin{align*}
\K_{\lambda}(a;z)& =\frac{\sin\pi z}{\pi} \sum_{n=1}^\infty (-1)^{n} \frac{\T_{\lambda}(a;n)}{(z-n)}+ \frac{\sin\pi z}{\pi z} \left\{ -\frac{e^{-\lambda}}{2} + \sum_{n=1}^\infty (-1)^{n+1}  e^{-a \lambda n} \right\}\\
& := \G_{\lambda}(a;z) + \K^{\star}_{\lambda}(a;z)
\end{align*}
is the best approximation of exponential type $\pi$ of $\T_{\lambda}(a;x)$ for any $a \leq 1$. Also, in Theorem \ref{tr-majorant-thm} we have shown that the entire function 
\begin{align*}
\L_{\lambda}(a;z) & = \frac{\sin^2\pi z}{\pi^2}\sum_{n=1}^\infty \left(\frac{\T_{\lambda}(a;n)}{(z-n)^2} + \frac{\T_{\lambda}'(a;n)}{(z-n)}\right)  + \frac{\sin^2\pi z}{\pi^2 z}\left\{ -e^{-\lambda } -\sum_{n=1}^\infty  \T_{\lambda}'(a;n) \right\}\\
& := \H_{\lambda}(a;z) + \L^{\star}_{\lambda}(a;z)
\end{align*}
is the extremal minorant of exponential type $2\pi$ of $\T_{\lambda}(a;x)$ for any $a \leq 1$, and that
\begin{align*}
\begin{split}
\M_{\lambda}(a;z) & = \frac{\sin^2\pi z}{\pi^2} \left\{\sum_{n=1}^\infty \left(\frac{\T_{\lambda}(a;n)}{(z-n)^2} + \frac{\T_{\lambda}'(a;n)}{(z-n)}\right) \right\} \\
& \qquad \ \ \ \   +  \left[\frac{\sin^2\pi z}{\pi^2 z}\left\{ -e^{-\lambda } -\sum_{n=1}^\infty  \T_{\lambda}'(a;n) \right\} + \frac{\sin^2\pi z}{\pi^2z^2} \big(1-e^{-\lambda}\big)\right]\\
& =: \H_{\lambda}(a;z) + \M^{\star}_{\lambda}(a;z)
\end{split}
\end{align*}
is the extremal majorant of exponential type $2\pi$ of $\T_{\lambda}(a;x)$ for any $a > 0$.

\smallskip

It will be useful to analyze the growth of the functions $\G_{\lambda}(a;z)$ and $\H_{\lambda}(a;z)$ when we restrict $\lambda$ to a compact interval. Letting $N>1$ and $\lambda \in [\frac{1}{N},N]$, the next lemma states that we can find bounds that depend only on $a$ and $N$.

\begin{lemma}\label{lem_unif_bound}
Given $N>1$ and $a >0$, there exist constants $c_1(a, N)$ and $c_2(a, N)$ such that 
\begin{equation}\label{prelim1}
\big|\G_{\lambda}(a;z)\big| \leq c_1(a, N) \,(1 + |z|) \,e^{\pi |y|},
\end{equation}
and
\begin{equation}\label{prelim2}
\big|\H_{\lambda}(a;z)\big| \leq c_2(a, N) \,(1 + |z|) \,e^{2 \pi |y|},
\end{equation}
for all complex numbers $z = x+iy$ and $\lambda \in [\tfrac{1}{N},N]$.
\end{lemma}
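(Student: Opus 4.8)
The plan is to identify the functions $\G_\lambda(a;z)$ and $\H_\lambda(a;z)$ introduced above with the entire functions produced by the machinery of the previous section. Precisely, let $\Phi=\Phi_{\lambda,a}$ be the restriction to $\rr$ of the entire function $w\mapsto e^{-a\lambda w}-e^{-\lambda}$, so that $\Phi(n)=\T_\lambda(a;n)$ and $\Phi'(n)=\T_\lambda'(a;n)$ for every positive integer $n$, hence $\A_1(n,\Phi;z)=\T_\lambda(a;n)/(z-n)$ and $\A_2(n,\Phi;z)=\T_\lambda(a;n)/(z-n)^2+\T_\lambda'(a;n)/(z-n)$. Comparing with \eqref{cv34} — where the sign $(-1)^{nk}$ equals $(-1)^n$ when $k=1$ and $1$ when $k=2$ — one checks that $\G_\lambda(a;z)=\F_1(\Phi;z)$ and $\H_\lambda(a;z)=\F_2(\Phi;z)$. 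The first step is then to verify that $\Phi$ satisfies \eqref{cv10}, \eqref{cv11} and \eqref{cv12}, so that Proposition~\ref{prop2.3} and Lemma~\ref{lem2.6} apply. All three conditions are immediate from the single elementary bound
\begin{equation*}
|\Phi(w)| = \big| e^{-a\lambda w} - e^{-\lambda}\big| \le e^{-a\lambda\Re(w)} + e^{-\lambda} \le 2, \qquad \Re(w)\ge 0,
\end{equation*}
together with the estimate $|\Phi(x+iy)/(x+iy)|\le 2/x$ (which handles \eqref{cv12}) and the integrability of $(\eta^2+v^2)^{-1/2}\,e^{-k\pi|v|}$ on vertical lines (which handles \eqref{cv11}, and similarly \eqref{cv10}).

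The crux of the matter is that $\sup_{\Re(w)\ge 0}|\Phi(w)|\le 2$ is \emph{independent of $\lambda$ and $a$}, and that re-running the proof of Lemma~\ref{lem2.6} yields a constant depending on $\Phi$ only through quantities of this type. Indeed, for $\Re(z)\ge\tfrac12$ the identity \eqref{cv41} with $\beta=\tfrac14$ gives $|\F_k(\Phi;z)|\le |\Phi(z)| + |I_k(\tfrac14,\Phi;z)| \le 2 + |I_k(\tfrac14,\Phi;z)|$, while for $\Re(z)\le\tfrac12$ the identity \eqref{cv42} with $\beta=\tfrac34$ gives $|\F_k(\Phi;z)| = |I_k(\tfrac34,\Phi;z)|$. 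On either half plane $|x-\beta|\ge\tfrac14$ for the relevant $\beta\in\{\tfrac14,\tfrac34\}$, so Lemma~\ref{lem2.2} yields
\begin{equation*}
|I_k(\beta,\Phi;z)| \le 4\,|\csc\pi\beta|^k\,B(\beta,\Phi)\,\big(1+|z|\big)\,e^{k\pi|y|},
\end{equation*}
while the uniform bound on $\Phi$ and $|\beta+iv|\ge\beta$ give
\begin{equation*}
B(\beta,\Phi) = \frac{2^{k-1}}{\pi}\int_{-\infty}^{\infty}\Big|\frac{\Phi(\beta+iv)}{\beta+iv}\Big|e^{-k\pi|v|}\,\dv \le \frac{2^{k}}{\pi}\int_{-\infty}^{\infty}\frac{e^{-k\pi|v|}}{\sqrt{\beta^2+v^2}}\,\dv < \infty,
\end{equation*}
a finite number depending only on $\beta$ and $k$. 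Since $|\csc(\pi/4)|=|\csc(3\pi/4)|=\sqrt2$, assembling these estimates proves \eqref{prelim1} with $k=1$ and \eqref{prelim2} with $k=2$, with constants that in fact do not depend on $\lambda$ or $a$ at all; a fortiori one may take them to be $c_1(a,N)$ and $c_2(a,N)$, and the restriction $\lambda\in[\tfrac{1}{N},N]$ is harmless.

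I expect the only genuine work to be bookkeeping: one must confirm that $\T_\lambda(a;\cdot)$ really falls under the $\Phi$-framework of the previous section — in particular that it is analytic on $\rr$, which it is, being the restriction of an entire function — and then keep track that every occurrence of $\Phi$ along the chain Lemma~\ref{lem2.1}–Lemma~\ref{lem2.6} is dominated by the $\lambda$-free constant $2$ rather than by something growing with $\lambda$. It is worth noting that Lemma~\ref{lem2.1} itself is not needed here, since in the half plane $\{\Re(z)\ge\tfrac12\}$ the term $|\Phi(z)|$ is bounded outright by $2$, so no separate growth estimate on $\Phi$ is required.
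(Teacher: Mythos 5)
Your proposal is correct, and it takes a genuinely different route from the paper. The paper's proof of Lemma~\ref{lem_unif_bound} is a two-line direct estimate on the defining series: it splits $\T_\lambda(a;n)=e^{-a\lambda n}-e^{-\lambda}$ into its two pieces, bounds the first by the geometric sum $\sum_n e^{-an/N}/|z-n|$ (where the restriction $\lambda\in[\tfrac1N,N]$ enters explicitly) and absorbs the second into the known function $K_0$; the dependence of $c_1,c_2$ on $a$ and $N$ comes straight out of this splitting. You instead recognize $\G_\lambda(a;\cdot)=\F_1(\Phi;\cdot)$ and $\H_\lambda(a;\cdot)=\F_2(\Phi;\cdot)$ with $\Phi(w)=e^{-a\lambda w}-e^{-\lambda}$ and re-run Lemma~\ref{lem2.6} while tracking how the constant $c_k(\Phi)$ depends on $\Phi$. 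The entire chain Lemma~\ref{lem2.2}--Lemma~\ref{lem2.6} is controlled by $\sup_{\Re w\ge 0}|\Phi(w)|\le 2$ and the resulting bound on $B(\beta,\Phi)$, both of which are free of $a$, $\lambda$, and $N$. Your verification of \eqref{cv10}--\eqref{cv12} and your bookkeeping through the choices $\beta=\tfrac14,\tfrac34$ are sound, and the observation that Lemma~\ref{lem2.1} becomes unnecessary because $|\Phi(z)|\le 2$ outright is a nice simplification. What the paper's route buys is that it needs none of the Section~4 apparatus, so the lemma stays self-contained and the two displayed inequalities really are the whole proof; what your route buys is a sharper conclusion — the constants do not depend on $a$ or on the truncation window $[\tfrac1N,N]$ at all, only on $k$ — and it exhibits the lemma as a special case of the general framework rather than as an ad hoc estimate. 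Either version is adequate for the way Lemma~\ref{lem_unif_bound} is used later (only integrability of $\lambda\mapsto|\G_\lambda|$, $|\H_\lambda|$ against the finite measure $\nu_N$ is needed), so the extra uniformity you obtain is a bonus rather than a requirement.
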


\begin{proof}
Observe first that 
\begin{align}\label{prelim3}
\begin{split}
\big|\G_{\lambda}(a;z)\big| & \leq \left| \frac{\sin \pi z}{\pi} \sum_{n=1}^{\infty} (-1)^n\frac{ e^{-a \lambda n}}{(z-n)} \right| +  e^{-\lambda}  \left| \frac{\sin \pi z}{\pi} \sum_{n=1}^{\infty} \frac{(-1)^n}{(z-n)} \right| \\
& \leq   \frac{|\sin \pi z|}{\pi} \sum_{n=1}^{\infty} \frac{ e^{-\frac{an}{N}}}{|z-n|} + e^{-\frac{1}{N}} \left| \frac{\sin \pi z}{\pi} \sum_{n=1}^{\infty} \frac{(-1)^n}{(z-n)} \right|\,,
\end{split}
\end{align}
and from \eqref{prelim3} we can get to \eqref{prelim1}. In a similar way we have
\begin{align}\label{prelim4}
\begin{split}
\big|\H_{\lambda}(a;z)\big| & \leq \frac{\big|\sin^2 \pi z\big|}{\pi} \sum_{n=1}^{\infty} \left\{ \frac{ \big|e^{-a\lambda n} - e^{-\lambda}\big|}{|z-n|^2} +  \frac{ a \lambda e^{-a \lambda n}}{|z-n|} \right\}\\
&\ \ \ \ \ \ \ \ \ \  \leq  \frac{\big|\sin^2 \pi z\big|}{\pi} \sum_{n=1}^{\infty} \left\{ \frac{ e^{-\frac{an}{N}} + e^{-\frac{1}{N}}}{|z-n|^2} +  \frac{ aN e^{-\frac{an}{N}}}{|z-n|} \right\}\,,
\end{split}
\end{align}
and from \eqref{prelim4} we get to \eqref{prelim2}. 
\end{proof}

Let $\nu$ be a nonnegative measure defined on the Borel subsets of $(0,\infty)$ that satisfies \eqref{min-nu-growth} or \eqref{maj-nu-growth}. For $z \in \C$ we write
\begin{equation}\label{pt0}
\TT_{\nu}(a; z) = \int_0^{\infty} \T_{\lambda}(a;z) \,\dnu.
\end{equation}
Observe that $z \mapsto \TT_{\nu}(a; z)$ is analytic in the open right half plane $\mc{R} = \{z \in \C; 0 < \Re(z)\}$ (from Morera's theorem) and it might take the value $+\infty$ at $\Re(z) =0$.

\begin{lemma}\label{CL_lem7}  For $k=1$ or $2$, and $a>0$, the analytic function $\TT_{\nu}(a; \,\cdot\,): \mc{R} \to \C$ defined by {\rm (\ref{pt0})} satisfies each of the three conditions {\rm (\ref{cv10})}, {\rm (\ref{cv11})} and {\rm (\ref{cv12})}.
\end{lemma}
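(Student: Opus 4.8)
The plan is to verify the three conditions \eqref{cv10}, \eqref{cv11}, \eqref{cv12} for $\Phi(z) = \TT_{\nu}(a;z)$ by first reducing everything to pointwise bounds on $\T_{\lambda}(a;z)$ in the right half plane, and then integrating against $\dnu$ using Tonelli's theorem together with the growth hypotheses \eqref{min-nu-growth} or \eqref{maj-nu-growth}. The starting point is the elementary estimate: for $z = x+iy$ with $x>0$ we have
\[
\big|\T_{\lambda}(a;z)\big| = \big|e^{-a\lambda z} - e^{-\lambda}\big| \le e^{-a\lambda x} + e^{-\lambda} \le 2,
\]
and more usefully, whenever we want decay in $\lambda$ near $\lambda\to\infty$ we keep the factor $e^{-a\lambda x}$, while for $\lambda \to 0$ we note $\big|e^{-a\lambda z} - e^{-\lambda}\big| \le \big|e^{-a\lambda z} - 1\big| + \big|1 - e^{-\lambda}\big| \le a\lambda|z|e^{a\lambda|z|} + \lambda$, which is $O(\lambda(1+|z|))$ on bounded regions. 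Thus on a fixed compact $x$-interval $[a_0,b_0]\subset(0,\infty)$ and a fixed strip $|y|\le Y$, we get $\big|\T_{\lambda}(a;z)\big| \le C\,\min\{1,\lambda\}\,(1 + |z|)$ uniformly, which is $\nu$-integrable by \eqref{min-nu-growth}; this legitimizes differentiating under the integral sign in $\mc{R}$ (already noted via Morera in the text) and, crucially, lets us pull the $\dnu$-integral outside the $x$- and $y$-integrals appearing in \eqref{cv10}--\eqref{cv12}.

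Once Tonelli is in place, each condition becomes a statement about the inner integrals applied to $\T_{\lambda}(a;\,\cdot\,)$, which decay like a single exponential and hence trivially satisfy the three conditions for each fixed $\lambda$: for \eqref{cv10}, since $\big|\T_{\lambda}(a;x+iy)/(x+iy)\big|$ is bounded independently of $y$ on $a \le x \le b$ (the numerator is at most $e^{-a\lambda x}+e^{-\lambda}$ and $|x+iy|\ge x \ge a$), multiplying by $e^{-k\pi|y|}$ and integrating over $x\in[a,b]$ gives something bounded by $C_{a,b}\,e^{-k\pi|y|} \cdot (\text{const})$, which $\to 0$ as $y \to \pm\infty$; integrating that bound against $\dnu$ preserves the limit by dominated convergence, using the $\nu$-integrable majorant supplied above. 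For \eqref{cv11} and \eqref{cv12}: the $y$-integral $\int_{-\infty}^{\infty} \big|\T_{\lambda}(a;x+iy)\big|\,|x+iy|^{-1} e^{-k\pi|y|}\,\dy$ is bounded by $\big(e^{-a\lambda x}+e^{-\lambda}\big) x^{-1}\int_{-\infty}^\infty e^{-k\pi|y|}\,\dy = C\big(e^{-a\lambda x}+e^{-\lambda}\big)x^{-1}$, so $\TT_\nu(a;\,\cdot\,)$'s corresponding integral is at most $C x^{-1}\int_0^\infty \big(e^{-a\lambda x}+e^{-\lambda}\big)\,\dnu$; to make this finite and to make it $\to 0$ as $x\to\infty$, I would split $\nu$ at $\lambda = 1$, bounding $\int_{\lambda\ge1} e^{-\lambda}\,\dnu < \infty$ and $\int_{\lambda\ge1} e^{-a\lambda x}\,\dnu \le e^{-ax}\cdot(\text{finite})$, while for $\lambda < 1$ we use $e^{-a\lambda x}+e^{-\lambda}\le 2$ together with $\int_{\lambda<1}\dnu$—but here one must be careful since $\int_{\lambda<1}\dnu$ need \emph{not} be finite under \eqref{min-nu-growth} alone. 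The fix is to keep more of the $x$-decay: use instead the bound $\big(e^{-a\lambda x}+e^{-\lambda}\big)x^{-1} \le 2\min\{1, C\lambda\}\,x^{-1}\cdot\frac{1+x}{1}$-type estimates; more precisely near $\lambda\to 0$ one writes $e^{-a\lambda x}+e^{-\lambda}-(\text{something})$ — actually the clean route is to exploit that the relevant quantity in \eqref{cv11}, \eqref{cv12} is $\big|\T_\lambda(a;x+iy)\big|/|x+iy|$ and to bound $\big|\T_\lambda(a;w)\big| \le \lambda\big(a|w| + 1\big)$ for all $\Re(w)>0$ (from $|e^{-a\lambda w}-1|\le a\lambda|w|$ when $\Re(w)>0$, valid since $|e^{-u}-1|\le|u|$ for $\Re(u)\ge0$, and $|1-e^{-\lambda}|\le\lambda$), so that $\big|\T_\lambda(a;w)\big|/|w| \le \lambda(a + 1/|w|) \le \lambda(a+1/\eta)$ on $\Re(w)\ge\eta$; combined with the competing bound $\big|\T_\lambda(a;w)\big|/|w|\le 2/\eta$, we get $\le \min\{\lambda(a+1/\eta), 2/\eta\} \le C_\eta\,\frac{\lambda}{1+\lambda}$, which is exactly $\nu$-integrable under \eqref{maj-nu-growth}, and $\le C_\eta\frac{\lambda}{1+\lambda^2}\cdot(1+\lambda)$—hmm.

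So the genuine subtlety, and what I expect to be the main obstacle, is matching the \emph{right} condition (\eqref{min-nu-growth} for $k=1$, which is used for $\KK_\nu$, and \eqref{maj-nu-growth} for $k=2$ in the majorant case) to the decay one can honestly extract from $\T_\lambda(a;w)$; the naive bound loses too much and one must use the sharper $\big|\T_\lambda(a;w)\big| \le e^{-a\lambda\Re(w)}+e^{-\lambda}$ for large $\lambda$ and $\big|\T_\lambda(a;w)\big| \le a\lambda|w|e^{0} + \lambda \le \lambda(1 + a|w|)$ for small $\lambda$, splitting the $\lambda$-integral at a threshold and noting that the large-$\lambda$ piece contributes $e^{-c x}$ times a finite constant (handling \eqref{cv12}) while the small-$\lambda$ piece contributes $\big(\int_{0}^{1}\lambda\,\dnu\big)\cdot O(1)$ — and $\int_0^1 \lambda\,\dnu$ is finite under \eqref{min-nu-growth}. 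For \eqref{cv12} specifically one then uses dominated convergence in $\lambda$: the integrand $\big(e^{-a\lambda x}+e^{-\lambda}\big)x^{-1} \to 0$ pointwise in $\lambda$ as $x\to\infty$ for each $\lambda$ where we keep the $e^{-a\lambda x}$ term, and on the $\lambda<1$ bulk we instead use that $\lambda x^{-1}(1+\text{stuff}) \to 0$; a dominating function is $C\min\{1,\lambda\}$ which is $\nu$-integrable. I would present the argument cleanly by first recording the single lemma ``$\big|\T_\lambda(a;w)\big|/|w| \le C_\eta\,\min\{1,\lambda\}$ for $\Re(w)\ge\eta$, and $\to 0$ as $\Re(w)\to\infty$ for each fixed $\lambda$'' and then feeding it, together with Tonelli and dominated convergence, through each of \eqref{cv10}, \eqref{cv11}, \eqref{cv12} in turn; the case $k=2$ in the majorant setting is where \eqref{maj-nu-growth} (rather than \eqref{min-nu-growth}) is actually needed, and I would flag that the extra factor in the majorant's definition, $\int_0^\infty(1-e^{-\lambda})\,\dnu$, converges precisely under \eqref{maj-nu-growth}, consistent with the hypotheses of Theorem \ref{maj-nu-thm}.
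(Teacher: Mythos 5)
The paper does not actually prove this lemma; its ``proof'' is the single-line citation to \cite[Lemma 4.1]{CV2} (for $k=2$) and \cite[Lemma 5.1]{CV3} (for $k=1$), where the corresponding growth estimates were established in the even setting. Your direct verification is therefore a genuinely self-contained alternative, and the content of your final paragraph is the right one: combine $\bigl|\T_\lambda(a;w)\bigr|\le e^{-a\lambda\Re w}+e^{-\lambda}$, which for fixed $\Re w\ge\eta$ decays exponentially in $\lambda$ (hence is $O(1/\lambda)$ and $\nu$-integrable on $[1,\infty)$ under \eqref{min-nu-growth}), with $\bigl|\T_\lambda(a;w)\bigr|\le\lambda(1+a|w|)$ for $\Re w>0$ (so that $|\T_\lambda(a;w)|/|w|\le\lambda(a+1/\eta)$, $\nu$-integrable on $(0,1]$ under \eqref{min-nu-growth}); split the $\nu$-integral at $\lambda=1$; then run Tonelli and dominated convergence through each of \eqref{cv10}, \eqref{cv11}, \eqref{cv12}. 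Your mid-proof worry about the crude bound $\min\{1,\lambda\}$ was well founded, since $\int_1^\infty \dnu$ need not be finite under \eqref{min-nu-growth}, and keeping the exponential decay in $\lambda$ is exactly what repairs this. The exposition would need to be cleaned up (the meandering ``hmm'' passage should be replaced by the clean two-regime bound you arrive at), but the mathematics is sound.

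One genuine error to flag: the closing claim that \eqref{maj-nu-growth} ``is actually needed'' for the $k=2$ growth estimates is wrong. The conditions \eqref{cv10}--\eqref{cv12} only get \emph{easier} as $k$ increases (the weight $e^{-k\pi|y|}$ decays faster), and the two-regime bound above establishes them under \eqref{min-nu-growth} alone for both $k=1$ and $k=2$. This is precisely what the minorant part of Theorem~\ref{maj-nu-thm} requires, since it is proved under \eqref{min-nu-growth} with $k=2$. The stronger hypothesis \eqref{maj-nu-growth} enters only to ensure finiteness of $\int_0^\infty(1-e^{-\lambda})\,\dnu$ in the definition of $\MM_\nu$; it plays no role in the growth estimates of this lemma.
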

\begin{proof}
This is \cite[Lemma 4.1]{CV2} for the case $k=2$ and \cite[Lemma 5.1]{CV3} for the case $k=1$. 
\end{proof}

\subsection{Proof of Theorem \ref{ba-nu-thm}} Let $\nu$ be a measure satisfying \eqref{min-nu-growth} and $\delta \geq 1$. Recall that finding the optimal approximation of type $\pi \delta$ for $\TT_{\nu}(x)$ is equivalent, by scaling, to the problem of finding the optimal approximation of type $\pi$ for $\TT_{\nu}\big(\delta^{-1}; x\big)$, and we will solve the latter. We define
\begin{align*}
\KK_{\nu}\big(\delta^{-1};z\big)& =\frac{\sin\pi z}{\pi} \left\{ \sum_{n=1}^\infty (-1)^{n} \frac{\TT_{\nu}\big(\delta^{-1};n\big)}{(z-n)}\right\} \\
&  \ \ \ \ \ \ \ \ \ \ \ \ \ \ \  \ \ \  + \frac{\sin\pi z}{\pi z} \left\{ \int_0^{\infty}\left(-\frac{e^{-\lambda}}{2} + \sum_{n=1}^\infty (-1)^{n+1}  e^{-\frac{\lambda}{\delta} n}\right)\dnu \right\}\\
& := \GG_{\nu}\big(\delta^{-1};z\big) + \KK^{\star}_{\nu}\big(\delta^{-1}; z\big),
\end{align*}
and we aim to show that this is the unique best approximation of exponential type $\pi$ of $\TT_{\nu}\big(\delta^{-1}; x\big)$.

\begin{lemma} Let $\delta\ge 1$. The function $z\mapsto \KK_{\nu}\big(\delta^{-1};z\big)$ is an entire function of exponential type $\pi$ which satisfies
\begin{align}\label{CL_pf3-lem}
\begin{split}
 \TT_{\nu}\big(\delta^{-1};x\big)  &-\KK_{\nu}\big(\delta^{-1};x\big) = \int_0^\infty  \big\{\T_{\lambda} \big(\delta^{-1};x\big)  - \K_{\lambda}\big(\delta^{-1};x\big)\big\}\, \dnu
\end{split}
\end{align}
for every real $x$. 
\end{lemma}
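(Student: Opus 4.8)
The statement has two parts: that $\KK_{\nu}(\delta^{-1};z)$ is entire of exponential type $\pi$, and the integral representation \eqref{CL_pf3-lem}. For the first part, the plan is to apply the machinery of Section 4. The function $\GG_{\nu}(\delta^{-1};z)$ is precisely $\F_1(\Phi;z)$ for the base function $\Phi(z) = \TT_{\nu}(\delta^{-1};z)$ defined in \eqref{pt0}: indeed, $\A_1(n,\Phi;z) = \Phi(n)/(z-n)$ and the $(-1)^n$ signs match the $k=1$ case of \eqref{cv33}. By Lemma \ref{CL_lem7}, $\Phi = \TT_{\nu}(\delta^{-1};\cdot)$ satisfies \eqref{cv10}, \eqref{cv11}, \eqref{cv12} with $k=1$, so Proposition \ref{prop2.3} gives that $\GG_{\nu}(\delta^{-1};z)$ is a well-defined entire function and Lemma \ref{lem2.6} gives the bound $|\GG_{\nu}(\delta^{-1};z)| \le c(1+|z|)e^{\pi|y|}$, i.e. exponential type at most $\pi$. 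The remaining piece $\KK^{\star}_{\nu}(\delta^{-1};z)$ is $(\sin\pi z)/(\pi z)$ times a constant (the integral converges since, under \eqref{min-nu-growth}, the inner expression $-e^{-\lambda}/2 + \sum_{n\ge1}(-1)^{n+1}e^{-\lambda n/\delta}$ is bounded by a multiple of $\lambda/(1+\lambda^2)$ as one checks from its $\lambda\to0$ and $\lambda\to\infty$ behavior), hence it is entire of type $\pi$. So $\KK_{\nu}(\delta^{-1};z) = \GG_{\nu}(\delta^{-1};z) + \KK^{\star}_{\nu}(\delta^{-1};z)$ is entire of exponential type $\pi$.

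\textbf{The integral representation.} For real $x$, I want to interchange the sum/limit defining $\GG_{\nu}$ with the integral $\int_0^\infty \cdots \dnu$ in \eqref{pt0}, and likewise for $\KK^{\star}_{\nu}$. Since $\T_{\lambda}(\delta^{-1};n) = \TT$-integrand evaluated pointwise, formally
\[
\GG_{\nu}(\delta^{-1};z) = \frac{\sin\pi z}{\pi}\sum_{n=1}^\infty (-1)^n \frac{1}{z-n}\int_0^\infty \T_{\lambda}(\delta^{-1};n)\,\dnu = \int_0^\infty \G_{\lambda}(\delta^{-1};z)\,\dnu,
\]
and $\KK^{\star}_{\nu}(\delta^{-1};z) = \int_0^\infty \K^{\star}_{\lambda}(\delta^{-1};z)\,\dnu$ by definition, so adding gives $\KK_{\nu}(\delta^{-1};x) = \int_0^\infty \K_{\lambda}(\delta^{-1};x)\,\dnu$, which together with \eqref{pt0} yields \eqref{CL_pf3-lem}. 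To justify the interchange rigorously I would use the partial-sum identity \eqref{cv40} from Proposition \ref{prop2.3}: with $\Phi = \T_{\lambda}(\delta^{-1};\cdot)$ (note $\delta\ge1$ so $\delta^{-1}\le1$, putting us in the valid range, and $\lambda$ ranges over $(0,\infty)$) one has for each $\lambda$ a representation of the tail $\sum_{n=M+1}^N$ as $I_1(N+\tfrac12,\T_{\lambda};z) - I_1(M+\tfrac12,\T_{\lambda};z)$, and Lemma \ref{lem2.2} bounds these contour integrals. To dominate uniformly in $\lambda$ over all of $(0,\infty)$ I would split the integral: on $[\tfrac1N,N]$ use the uniform bounds of Lemma \ref{lem_unif_bound} (which give domination by $c(a,N)(1+|z|)e^{\pi|y|}$, integrable against $\nu$ restricted to this compact set since $\nu$ is a finite measure there); for $\lambda\to\infty$ the terms decay like $e^{-\lambda}$; for $\lambda\to0$ the key cancellation is that $\T_{\lambda}(\delta^{-1};n) = e^{-\lambda n/\delta} - e^{-\lambda} \to 0$ and is $O(\lambda(1+\lambda)^{-1})$-controlled uniformly — more precisely $|e^{-\lambda n/\delta}-e^{-\lambda}| \le \lambda|n/\delta - 1|$ for small $\lambda$, and the alternating-series structure together with the $\sin\pi z/(\pi(z-n))$ decay makes the resulting sum converge and be bounded by a $\nu$-integrable function under \eqref{min-nu-growth}. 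Then dominated convergence (in $N$, then Fubini–Tonelli for the $\dnu$ vs $\sum_n$ interchange) delivers \eqref{CL_pf3-lem}.

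\textbf{Main obstacle.} The delicate point is the uniform-in-$\lambda$ domination needed to pull the measure integral through the (conditionally convergent, alternating) interpolation series near $\lambda = 0$: the individual terms $\G_{\lambda}(\delta^{-1};z)$ blow up unboundedly in $\lambda$ near the origin if one estimates naively (each $e^{-\lambda n/\delta}$ tends to $1$), and it is only the subtraction of $e^{-\lambda}$ inside $\T_{\lambda}$, i.e. the shift encoded in $c = e^{-\lambda}$, that produces the $O(\lambda)$ gain making things $\nu$-integrable under \eqref{min-nu-growth}. Concretely I expect to first establish, via the contour representation $\G_{\lambda}(\delta^{-1};z) = \T_{\lambda}(\delta^{-1};z) - I_1(\beta,\T_{\lambda};z)$ on $\Re z > \beta$ and $-I_1(\beta,\T_{\lambda};z)$ on $\Re z < \beta$ (Lemma \ref{lem2.4}), a bound of the form $|\G_{\lambda}(\delta^{-1};z)| \le C(z)\,\frac{\lambda}{1+\lambda^2}\cdot(\text{something bounded})$ valid for all $\lambda$ and locally uniformly in $z$; combined with the analogous elementary bound for $\K^{\star}_{\lambda}$, this is exactly the hypothesis \eqref{min-nu-growth} is designed to absorb. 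Once that bound is in hand the rest is routine dominated convergence. Everything else — entireness, type $\pi$, the algebraic splitting $\KK_\nu = \GG_\nu + \KK^{\star}_\nu$ — follows directly from Proposition \ref{prop2.3}, Lemma \ref{lem2.6} and Lemma \ref{CL_lem7}.
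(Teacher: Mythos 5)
Your plan is correct in substance and lands on the same key tools the paper uses: Proposition \ref{prop2.3}, Lemma \ref{lem2.6}, and Lemma \ref{CL_lem7} for entireness and type $\pi$ of $\GG_\nu$; the integrability of the explicit series in $\KK^\star_\nu$ under \eqref{min-nu-growth}; and the contour representation of Lemma \ref{lem2.4} as the mechanism for controlling the interpolation sum. The place where the paper is more efficient is exactly the ``main obstacle'' you flag. You propose to establish a pointwise-in-$\lambda$ bound $|\G_\lambda(\delta^{-1};z)|\lesssim C(z)\,\lambda/(1+\lambda^2)$ (via $\G_\lambda=\T_\lambda-I_1(\beta,\T_\lambda;\cdot)$ or $-I_1(\beta,\T_\lambda;\cdot)$) and then push the $\dnu$-integral through the partial sums. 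The paper sidesteps the need for that pointwise bound by first truncating the measure to $\nu_N$ on $[\tfrac1N,N]$. With $\nu_N$ finite, Lemma \ref{lem_unif_bound} makes Fubini painless, and after moving the $\dnu_N$-integral \emph{inside} the contour integral (identity \eqref{CL_pf0}) the quantity that must be dominated is the already-integrated object $\TT_N(\delta^{-1};w)$ on the line $\Re w=\beta$, for which there is a one-line, $N$-independent bound $|\TT_N(\delta^{-1};w)|\le(|w|+\delta)\,|\TT'_\nu(\delta^{-1};\beta)|$ coming from the mean value theorem and \eqref{min-nu-growth}. Dominated convergence in $N$ on the contour integral, Lemma \ref{lem2.4} again, and finally monotone convergence on the real axis (using the nonnegativity of $\sin\pi x\{\T_\lambda-\K_\lambda\}$ from Theorem \ref{tr-ba-thm}) then give \eqref{CL_pf3-lem}. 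So you have the right route; the paper's truncated-measure device replaces the technically fussier uniform-in-$\lambda$ estimate on $\G_\lambda$ with a simpler estimate on $\TT_N$ along the contour, which is the part you'd want to adopt to close the argument cleanly.
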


\begin{proof} 
Note first that 
\begin{equation}\label{CL_ineq_pos}
-\frac{e^{-\lambda}}{2} + \sum_{n=1}^\infty (-1)^{n+1}  e^{-\frac{\lambda}{\delta} n} = \frac{2 e^{-\frac{\lambda}{\delta}} - e^{-\lambda} - e^{-\lambda - \frac{\lambda}{\delta}}}{2 \big( 1 + e^{-\frac{\lambda}{\delta}}\big)} \geq 0
\end{equation}
for any $\lambda >0 $ and $\delta \geq 1$, and this is integrable with respect to the measure $\nu$. Therefore $ \KK^{\star}_{\nu}\big(\delta^{-1}; z\big)$ is an entire function of exponential type $\pi$. From the general framework provided by  Proposition \ref{prop2.3} and Lemma \ref{lem2.6}, together with Lemma \ref{CL_lem7},  we also know that $\GG_{\nu}\big(\delta^{-1};z\big)$ is an entire function of exponential type $\pi$, and therefore so is $\KK_{\nu}\big(\delta^{-1};z\big)$.

For $N>1$ we shall consider truncations $\nu_N$ of the measure $\nu$, by restricting it to the interval $[\tfrac{1}{N},N]$. In particular, from \eqref{min-nu-growth} we know that $\nu_N$ is a finite measure. Let us write
\begin{equation}\label{def_T_N}
\TT_{N}\big(\delta^{-1};z\big) = \TT_{\nu_N}\big(\delta^{-1};z\big) \!=\!  \int_{0}^{\infty} \T_{\lambda}\big(\delta^{-1};z\big) \, \dnuN =  \int_{\frac{1}{N}}^{N} \T_{\lambda}\big(\delta^{-1};z\big) \, \dnu,
\end{equation}
and define
\begin{equation*}
\GG_{N}\big(\delta^{-1};z\big) = \int_{0}^{\infty} \G_{\lambda}\big(\delta^{-1};z\big) \, \dnuN =  \int_{\frac{1}{N}}^{N} \G_{\lambda}\big(\delta^{-1};z\big) \, \dnu,
\end{equation*}
\begin{equation*}
\KK^{\star}_{N}\big(\delta^{-1};z\big) = \int_{0}^{\infty}\K^{\star}_{\lambda}\big(\delta^{-1};z\big) \, \dnuN =  \int_{\frac{1}{N}}^{N}\K^{\star}_{\lambda}\big(\delta^{-1};z\big) \, \dnu,
\end{equation*}
and
\begin{equation*}
\KK_{N}\big(\delta^{-1};z\big) = \GG_{N}\big(\delta^{-1};z\big)  + \KK^{\star}_{N}\big(\delta^{-1};z\big).
\end{equation*}
From Lemma \ref{lem_unif_bound} we find that $\lambda \mapsto \big|\G_{\lambda}\big(\delta^{-1};z\big)\big|$ is integrable with respect to the finite measure $\nu_N$ and thus $\GG_{N}\big(\delta^{-1};z\big)$ is an entire function of exponential type $\pi$. Similarly, it follows from \eqref{CL_ineq_pos} that $\KK^{\star}_{N}\big(\delta^{-1};z\big)$ is an entire function of exponential type $\pi$, hence $\KK_{N}\big(\delta^{-1};z\big)$ is also an entire function of exponential type $\pi$. In fact, it can be shown that $\KK_{N}\big(\delta^{-1};z\big)$ is the solution of the best approximation problem of type $\pi$ for $\TT_{N}\big(\delta^{-1};x\big)$ but we shall not use this particular fact here.

\smallskip

Let $0< \beta <1$. For $\beta < \Re(z)$, using Lemma \ref{lem2.4} and Fubini's theorem we have
\begin{align}\label{CL_pf0}
\begin{split}
\TT_{N}&\big(\delta^{-1};z\big)  - \KK_{N}\big(\delta^{-1};z\big)\\
&  = \big(\TT_{N}\big(\delta^{-1};z\big) - \GG_{N}\big(\delta^{-1};z\big)\big)  - \KK^{\star}_{N}\big(\delta^{-1};z\big) \\
& =    \int_{\frac{1}{N}}^{N}\big\{ \T_{\lambda}\big(\delta^{-1};z\big) -\G_{\lambda}\big(\delta^{-1};z\big)\big\}   \, \dnu - \KK^{\star}_{N}\big(\delta^{-1};z\big) \\
& = \int_{\frac{1}{N}}^{N}\left\{ \frac{1}{2\pi i}\int_{\beta-i\infty}^{\beta+i\infty}\left( \frac{\sin \pi z}{\sin \pi w}\right) \frac{\T_{\lambda}\big(\delta^{-1};w\big)}{(z-w)}\, \dw\right\}\dnu  - \KK^{\star}_{N}\big(\delta^{-1};z\big)\\
& = \frac{1}{2\pi i} \int_{\beta-i\infty}^{\beta+i\infty}\left( \frac{\sin \pi z}{\sin \pi w}\right)\frac{\TT_{N}\big(\delta^{-1};w\big)}{(z-w)}\, \dw - \KK^{\star}_{N}\big(\delta^{-1};z\big).
\end{split}
\end{align}
Now observe that 
\begin{align}\label{CL_pf1}
\begin{split}
\big|\TT_{N}\big(\delta^{-1};w\big)\big| & \leq \int_0^{\infty} \big| \T_{\lambda}\big(\delta^{-1};w\big)\big| \,\dnu \\
&=  \int_0^{\infty} \left| \int_{\delta}^{w} \T'_{\lambda}\big(\delta^{-1};z\big)\,\dz\right| \dnu \\
& =  \int_0^{\infty} \left| \int_{\delta}^{w} -\tfrac{\lambda}{\delta}\, e^{-\frac{\lambda}{\delta}z}\,\dz\right| \dnu \\
&\leq (|w| + \delta) \int_0^{\infty} \tfrac{\lambda}{\delta}\, e^{-\frac{\lambda}{\delta}\beta}\, \dnu\\
& = (|w| + \delta) \, \big|\TT'_{\nu}\big(\delta^{-1};\beta\big)\big|.
\end{split}
\end{align}
From \eqref{CL_pf1} we can apply dominated convergence in \eqref{CL_pf0}. Using Lemma \ref{lem2.4} again we have
\begin{align}\label{CL_pf2}
\begin{split}
\lim_{N\to \infty} & \big\{\TT_{N} \big(\delta^{-1};z\big)  - \KK_{N}\big(\delta^{-1};z\big)\big\}  \\
&  =  \frac{1}{2\pi i} \int_{\beta-i\infty}^{\beta+i\infty}\!\!\left( \frac{\sin \pi z}{\sin \pi w}\right)\frac{\TT_{\nu}\big(\delta^{-1};w\big)}{(z-w)} \dw - \KK^{\star}_{\nu}\big(\delta^{-1};z\big)\\
&  = \big(\TT_{\nu}\big(\delta^{-1};z\big) - \GG_{\nu}\big(\delta^{-1};z\big)\big)  - \KK^{\star}_{\nu}\big(\delta^{-1};z\big) \\
&= \TT_{\nu}\big(\delta^{-1};z\big)  -\KK_{\nu}\big(\delta^{-1};z\big).
\end{split}
\end{align}
In particular, for any positive $x$, using \eqref{th1-eq1} we can apply the monotone convergence theorem to get 
\begin{align}\label{CL_pf3}
\begin{split}
 \TT_{\nu}\big(\delta^{-1};x\big)  &-\KK_{\nu}\big(\delta^{-1};x\big) \\
&= \lim_{N\to \infty} \big\{\TT_{N} \big(\delta^{-1};x\big)  - \KK_{N}\big(\delta^{-1};x\big)\big\}\\
 & =    \lim_{N\to \infty}  \int_{\frac{1}{N}}^{N} \big\{\T_{\lambda} \big(\delta^{-1};x\big)  - \K_{\lambda}\big(\delta^{-1};x\big)\big\}\, \dnu \\
 &= \int_0^\infty  \big\{\T_{\lambda} \big(\delta^{-1};x\big)  - \K_{\lambda}\big(\delta^{-1};x\big)\big\}\, \dnu.
\end{split}
\end{align}
For $\Re(z) <0$, we apply instead equation \eqref{cv42} in Lemma \ref{lem2.4} to obtain \eqref{CL_pf0} and \eqref{CL_pf2} in the exact same manner. Therefore \eqref{CL_pf3} also holds for any negative $x$.
\end{proof}

We now have all the ingredients to prove Theorem \ref{ba-nu-thm}. From \eqref{CL_pf3-lem} we conclude that 
\begin{equation*}
\sin \pi x \,\big\{ \TT_{\nu}\big(\delta^{-1};x\big)  -\KK_{\nu}\big(\delta^{-1};x\big)\big\} \geq 0 
\end{equation*}
for all real $x \neq 0$, and also
\begin{align*}
\int_{-\infty}^{\infty} \big| \TT_{\nu}\big(\delta^{-1};x\big)  -\KK_{\nu}& \big(\delta^{-1};x\big)\big|\, \dx  =  \int_{-\infty}^{\infty} \int_0^\infty \big|\T_{\lambda} \big(\delta^{-1};x\big)  - \K_{\lambda}\big(\delta^{-1};x\big)\big|\, \dnu\,  \dx \\
& \ \ \ \  = \int_0^\infty \left( \frac{1-e^{-\delta^{-1}\lambda}}{\delta^{-1}\lambda \big(1+e^{-\delta^{-1}\lambda}\big)} - \frac{e^{-\lambda}}{2}\right)\dnu.
\end{align*}

The argument to prove that this is indeed the minimum possible value, and that the best approximation $z \mapsto \KK_{\nu}\big(\delta^{-1};z\big)$ is unique, is analogous to the proof of Theorem \ref{tr-ba-thm}, and we omit the details.  This concludes the proof of Theorem \ref{ba-nu-thm}.

\subsection{Proof of Theorem  \ref{maj-nu-thm} - Minorant} We will follow the same strategy designed for the proof of Theorem \ref{ba-nu-thm}. We keep working here under the assumptions that $\delta \geq 1$ and that the measure $\nu$ satisfies  \eqref{min-nu-growth}. Recall that we want to show that the function
\begin{align}\label{CL_pf4_2}
\begin{split}
\LL_{\nu}\big(\delta^{-1};z\big) & = \frac{\sin^2\pi z}{\pi^2}\left\{ \sum_{n=1}^\infty \left(\frac{\TT_{\nu}\big(\delta^{-1};n\big)}{(z-n)^2} + \frac{\TT_{\nu}'\big(\delta^{-1};n\big)}{(z-n)}\right) \right\} \\
& \ \ \ \ \ \ \ \ \ \ \ \ \ + \frac{\sin^2\pi z}{\pi^2 z}\left\{ \int_0^{\infty} \left(-e^{-\lambda } -\sum_{n=1}^\infty  \T_{\lambda}'\big(\delta^{-1};n\big)\right) \dnu \right\}\\
& := \HH_{\lambda}\big(\delta^{-1};z\big) + \LL^{\star}_{\lambda}\big(\delta^{-1};z\big)
\end{split}
\end{align}
is the unique extremal minorant of exponential type $2\pi$ of $\TT_{\nu}\big(\delta^{-1}; x\big)$.

\begin{lemma} Let $\delta\ge 1$. The function $z\mapsto \LL_{\nu}\big(\delta^{-1};z\big)$ is an entire function of exponential type $2\pi$ which satisfies
\begin{align}\label{CL_pf3_lem2}
\begin{split}
 \TT_{\nu}\big(\delta^{-1};x\big) & -\LL_{\nu}\big(\delta^{-1};x\big) = \int_0^\infty \big\{\T_{\lambda}\big(\delta^{-1};x\big)  - \L_{\lambda}\big(\delta^{-1};x\big)\big\}\, \dnu.
\end{split}
\end{align}
for every real $x$. 
\end{lemma}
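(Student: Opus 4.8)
The plan is to mirror the proof of the analogous best-approximation lemma that precedes it, replacing the interpolation kernel of order $k=1$ by the one of order $k=2$. The first step is to verify that $z \mapsto \LL_{\nu}\big(\delta^{-1};z\big)$ is entire of exponential type $2\pi$. For the term $\LL^{\star}_{\nu}\big(\delta^{-1};z\big)$ one computes $-e^{-\lambda} - \sum_{n=1}^{\infty}\T_{\lambda}'\big(\delta^{-1};n\big) = -e^{-\lambda} + \sum_{n=1}^{\infty} \tfrac{\lambda}{\delta} e^{-\frac{\lambda}{\delta}n} = -e^{-\lambda} + \frac{\lambda/\delta}{e^{\lambda/\delta}-1}$, and one checks that this quantity is $\nu$-integrable under \eqref{min-nu-growth}: as $\lambda \to 0$ it behaves like $1-e^{-\lambda} = O(\lambda)$ after cancellation with the $B$-type term, and as $\lambda\to\infty$ it is $O(e^{-\lambda})$; in either case it is dominated by a multiple of $\lambda/(1+\lambda^2)$, so the integral converges and $\LL^{\star}_{\nu}\big(\delta^{-1};z\big)$ has type $2\pi$. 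For the piece $\HH_{\nu}\big(\delta^{-1};z\big)$ one applies the general machinery: by Lemma \ref{CL_lem7} the function $\TT_{\nu}\big(\delta^{-1};\cdot\big)$ satisfies \eqref{cv10}, \eqref{cv11}, \eqref{cv12} with $k=2$, hence Proposition \ref{prop2.3} and Lemma \ref{lem2.6} give that $\F_2\big(\TT_{\nu}(\delta^{-1};\cdot);z\big) = \HH_{\nu}\big(\delta^{-1};z\big)$ is entire of exponential type at most $2\pi$. Adding the two pieces yields the type-$2\pi$ claim.

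The second step is to establish the integral identity \eqref{CL_pf3_lem2}. Here I would introduce the truncated measures $\nu_N$ (restriction of $\nu$ to $[\tfrac1N,N]$), which are finite by \eqref{min-nu-growth}, and the corresponding finite-measure objects $\HH_N\big(\delta^{-1};z\big) = \int \HH_{\lambda}\big(\delta^{-1};z\big)\dnuN$, $\LL^{\star}_N$, $\LL_N = \HH_N + \LL^{\star}_N$. Lemma \ref{lem_unif_bound} (estimate \eqref{prelim2}) guarantees that $\lambda \mapsto \big|\HH_{\lambda}\big(\delta^{-1};z\big)\big|$ is $\nu_N$-integrable, so $\LL_N\big(\delta^{-1};z\big)$ is entire of exponential type $2\pi$ and, by the $k=2$ case of Theorem \ref{tr-majorant-thm}, is the extremal minorant of $\TT_N\big(\delta^{-1};x\big)$. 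For $\beta < \Re(z)$ with $0<\beta<1$, Lemma \ref{lem2.4} (equation \eqref{cv41} with $k=2$) applied pointwise in $\lambda$ and then integrated via Fubini gives
\begin{align*}
\TT_N\big(\delta^{-1};z\big) - \LL_N\big(\delta^{-1};z\big)
 = \frac{1}{2\pi i}\int_{\beta-i\infty}^{\beta+i\infty}\left(\frac{\sin\pi z}{\sin\pi w}\right)^{\!2}\frac{\TT_N\big(\delta^{-1};w\big)}{(z-w)}\,\dw - \LL^{\star}_N\big(\delta^{-1};z\big),
\end{align*}
exactly as in \eqref{CL_pf0}. The contour integrand is dominated uniformly in $N$ using the bound $\big|\TT_N\big(\delta^{-1};w\big)\big| \le (|w|+\delta)\,\big|\TT'_{\nu}\big(\delta^{-1};\beta\big)\big|$ from \eqref{CL_pf1}, together with \eqref{cv31}, so dominated convergence lets $N\to\infty$, and Lemma \ref{lem2.4} backwards identifies the limit as $\TT_{\nu}\big(\delta^{-1};z\big) - \LL_{\nu}\big(\delta^{-1};z\big)$. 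For $\Re(z)<0$ one uses \eqref{cv42} with $k=2$ in place of \eqref{cv41} in the identical fashion. Finally, restricting to real $x$, one invokes \eqref{th2-eq1} (which gives $\T_{\lambda}\big(\delta^{-1};x\big) - \L_{\lambda}\big(\delta^{-1};x\big) \ge 0$) to justify monotone convergence and pass from the limit over $[\tfrac1N,N]$ to the integral over $(0,\infty)$, obtaining \eqref{CL_pf3_lem2}.

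The main obstacle is the integrability bookkeeping: one must check that $\lambda \mapsto \T_{\lambda}\big(\delta^{-1};x\big) - \L_{\lambda}\big(\delta^{-1};x\big)$ is genuinely $\nu$-integrable (so that the right-hand side of \eqref{CL_pf3_lem2} is finite and the limit interchange is legitimate), and that the $\nu_N \to \nu$ passage in the contour-integral representation is justified by a single $\nu$-integrable dominating function independent of both $N$ and $z$ on compact sets. The estimate \eqref{CL_pf1}, which controls $\big|\TT_N\big(\delta^{-1};w\big)\big|$ by the finite quantity $\big|\TT'_{\nu}\big(\delta^{-1};\beta\big)\big|$ (finite precisely because $\lambda/\delta\, e^{-\lambda\beta/\delta}$ is $\nu$-integrable under \eqref{min-nu-growth}), is the crucial input here, and it is the same mechanism that worked in the $k=1$ case; the only new wrinkle is the extra factor from the squared sine kernel, which is harmless since $\big|\csc\pi\beta\big|^2$ is a fixed constant. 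Everything else is a line-by-line transcription of the preceding lemma with $k=1$ replaced by $k=2$.
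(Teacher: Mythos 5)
Your proposal is correct and follows the paper's argument essentially line by line: truncated measures $\nu_N$, the contour-integral representation from Lemma \ref{lem2.4} with $k=2$ combined with Fubini, domination via the estimate \eqref{CL_pf1}, passage to the limit, and a final monotone-convergence step using \eqref{th2-eq1}, together with the nonnegativity and $\nu$-integrability check for $\LL^\star_\nu$ and the growth machinery (Proposition \ref{prop2.3}, Lemma \ref{lem2.6}, Lemma \ref{CL_lem7}) for $\HH_\nu$. The only blemish is your aside that $\LL_N$ is the extremal minorant of $\TT_N$ ``by the $k=2$ case of Theorem \ref{tr-majorant-thm}'' — that theorem treats a single $\lambda$, not a measure average, and in any event this extremality claim is never used in the argument (as the paper itself notes).
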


\begin{proof} We observe first that 
\begin{align}\label{CL_ineq_pos_2}
-e^{-\lambda } -\sum_{n=1}^\infty  \T_{\lambda}'\big(\delta^{-1};n\big) = \frac{ -e^{-\lambda} + e^{-\lambda - \frac{\lambda}{\delta}} + \frac{\lambda}{\delta} e^{-\frac{\lambda}{\delta}}}{ 1 - e^{-\frac{\lambda}{\delta}}} \geq 0
\end{align}
for any $\lambda >0 $ and $\delta \geq 1$, and that this is integrable with respect to the measure $\nu$. Therefore $\LL^{\star}_{\lambda}\big(\delta^{-1};z\big)
$ is an entire function of exponential type $2\pi$. From Proposition \ref{prop2.3} and Lemma \ref{lem2.6}, together with Lemma \ref{CL_lem7},  we know that $\HH_{\nu}\big(\delta^{-1};z\big)$ is also an entire function of exponential type $2\pi$, and thus so is $\LL_{\nu}\big(\delta^{-1};z\big)$.

\smallskip

With $\TT_N\big(\delta^{-1};z\big)$ as in \eqref{def_T_N} we now define
\begin{equation*}
\HH_{N}\big(\delta^{-1};z\big) = \int_{0}^{\infty} \H_{\lambda}\big(\delta^{-1};z\big) \, \dnuN =  \int_{\frac{1}{N}}^{N} \H_{\lambda}\big(\delta^{-1};z\big) \, \dnu,
\end{equation*}
\begin{equation*}
\LL^{\star}_{N}\big(\delta^{-1};z\big) = \int_{0}^{\infty} \L^{\star}_{\lambda}\big(\delta^{-1};z\big) 
\, \dnuN =  \int_{\frac{1}{N}}^{N} \L^{\star}_{\lambda}\big(\delta^{-1};z\big) 
\, \dnu,
\end{equation*}
and
\begin{equation*}
\LL_{N}\big(\delta^{-1};z\big) = \HH_{N}\big(\delta^{-1};z\big)  + \LL^{\star}_{N}\big(\delta^{-1};z\big).
\end{equation*}
As before, from Lemma \ref{lem_unif_bound} we find that $\big|\H_{\lambda}\big(\delta^{-1};z\big)\big|$ is integrable with respect to the finite measure $\nu_N$ and thus $\HH_{N}\big(\delta^{-1};z\big)$ is a well defined entire function of exponential type $2\pi$. The analogous statement is true for $\LL^{\star}_{N}\big(\delta^{-1};z\big)$ by \eqref{CL_ineq_pos_2}, and thus $\LL_{N}\big(\delta^{-1};z\big)$ is an entire function of exponential type $2\pi$.  The entire function $\LL_{N}\big(\delta^{-1};z\big)$ turns out to be the unique extremal minorant of type $2\pi$ for $\TT_{N}\big(\delta^{-1};x\big)$, but this is not required for the following, and we do not prove it here.

\smallskip 

Let $0< \beta <1$. For $\beta < \Re(z)$, using Lemma \ref{lem2.4} and Fubini's theorem we have
\begin{align}\label{CL_pf0_2}
\begin{split}
\TT_{N}& \big(\delta^{-1};z\big)  - \LL_{N}\big(\delta^{-1};z\big)\\
&  = \big(\TT_{N}\big(\delta^{-1};z\big) - \HH_{N}\big(\delta^{-1};z\big)\big)  - \LL^{\star}_{N}\big(\delta^{-1};z\big) \\
& =    \int_{\frac{1}{N}}^{N}\big\{ \T_{\lambda}\big(\delta^{-1};z\big) -\H_{\lambda}\big(\delta^{-1};z\big)\big\}   \, \dnu - \LL^{\star}_{N}\big(\delta^{-1};z\big) \\
& = \int_{\frac{1}{N}}^{N}\left\{ \frac{1}{2\pi i}\int_{\beta-i\infty}^{\beta+i\infty}\left( \frac{\sin \pi z}{\sin \pi w}\right)^2 \frac{\T_{\lambda}\big(\delta^{-1};w\big)}{(z-w)}\, \dw\right\}\dnu  - \LL^{\star}_{N}\big(\delta^{-1};z\big)\\
& = \frac{1}{2\pi i} \int_{\beta-i\infty}^{\beta+i\infty}\left( \frac{\sin \pi z}{\sin \pi w}\right)^2\frac{\TT_{N}\big(\delta^{-1};w\big)}{(z-w)}\, \dw - \LL^{\star}_{N}\big(\delta^{-1};z\big).
\end{split}
\end{align}
Using \eqref{CL_pf1} again, we may let $N \to \infty$ in \eqref{CL_pf0_2} using dominated convergence. Another application of Lemma \ref{lem2.4} gives us
\begin{align}\label{CL_pf2_2}
\begin{split}
 \lim_{N\to \infty} & \big\{\TT_{N} \big(\delta^{-1};z\big)  - \LL_{N}\big(\delta^{-1};z\big)\big\} \\
&  = \frac{1}{2\pi i} \int_{\beta-i\infty}^{\beta+i\infty}\left( \frac{\sin \pi z}{\sin \pi w}\right)^2\frac{\TT_{\nu}\big(\delta^{-1};w\big)}{(z-w)} \dw - \LL^{\star}_{\lambda}\big(\delta^{-1};z\big) 
\\
& = \big(\TT_{\nu}\big(\delta^{-1};z\big) - \HH_{\nu}\big(\delta^{-1};z\big)\big)  - \LL^{\star}_{\lambda}\big(\delta^{-1};z\big) \\
& = \TT_{\nu}\big(\delta^{-1};z\big)  -\LL_{\nu}\big(\delta^{-1};z\big).
\end{split}
\end{align}
In particular, for any positive $x$, using \eqref{th2-eq1} we apply the monotone convergence theorem to get 
\begin{align}\label{CL_pf3_2}
\begin{split}
 \TT_{\nu}\big(\delta^{-1};x\big)  -\LL_{\nu}\big(\delta^{-1};x\big)& = \lim_{N\to \infty} \big\{\TT_{N} \big(\delta^{-1};x\big)  - \LL_{N}\big(\delta^{-1};x\big)\big\}\\
 & =    \lim_{N\to \infty}  \int_{\frac{1}{N}}^{N} \big\{\T_{\lambda}\big(\delta^{-1};x\big)  - \L_{\lambda}\big(\delta^{-1};x\big)\big\}\,  \dnu \\
 & = \int_0^\infty \big\{\T_{\lambda}\big(\delta^{-1};x\big)  - \L_{\lambda}\big(\delta^{-1};x\big)\big\}\, \dnu.
\end{split}
\end{align}
The same result holds for $\Re(z) <0$, when we apply instead equation \eqref{cv42} in Lemma \ref{lem2.4} to obtain \eqref{CL_pf0_2} and \eqref{CL_pf2_2}. Thus \eqref{CL_pf3_2} also holds for any negative $x$ and the lemma is shown.
\end{proof}

To finish the proof of the minorant case of Theorem \ref{maj-nu-thm} we note that \eqref{CL_pf3_lem2} implies
\begin{equation}\label{CL_pf5_2}
\TT_{\nu}\big(\delta^{-1};x\big)  -\LL_{\nu}\big(\delta^{-1};x\big) \geq 0
\end{equation}
for any $x \neq 0$, and that
\begin{align}\label{CL_pf6_2}
\begin{split}
\int_{-\infty}^{\infty}  \big\{ \TT_{\nu}&\big(\delta^{-1};x\big)  -\LL_{\nu}\big(\delta^{-1};x\big)\big\}\, \dx \\
&  =  \int_{-\infty}^{\infty}\int_0^\infty  \big\{\T_{\lambda}\big(\delta^{-1};x\big)  - \L_{\lambda}\big(\delta^{-1};x\big)\big\} \dnu\,  \dx \\
&  = \int_0^\infty\left(\frac\delta\lambda -\frac{e^{-\lambda}}{2} -\frac{1}{e^{\delta^{-1}\lambda}-1}\right)\dnu.
\end{split}
\end{align}
Equality occurs in \eqref{CL_pf5_2} if $x \in \Z\backslash\{0\}$. From \eqref{CL_pf4_2} we also have $\LL_{\nu}\big(\delta^{-1};0\big) = 0$. This is enough to conclude that \eqref{CL_pf6_2} is indeed the minimal integral, and it is attained only by the entire function $z \mapsto \LL_{\nu}\big(\delta^{-1};z\big)$, as in the proof of Theorem \ref{tr-majorant-thm}.

\subsection{Proof of Theorem \ref{maj-nu-thm} - Majorant} In this case we work with any $\delta >0$. This is justified since we have the shown the result for the base function  $\T_{\lambda}\big(\delta^{-1};x\big)$. We also require the more restrictive condition \eqref{maj-nu-growth} on the measure $\nu$. From \eqref{defMM} our candidate is
\begin{equation*}
\MM_\nu\big(\delta^{-1};z\big) = \LL_\nu\big(\delta^{-1}; z\big) + \left(\frac{\sin\pi z}{\pi z}\right)^2 \int_0^\infty \big(1-e^{-\lambda}\big) \,\dnu.
\end{equation*}
From the minorant case we already know that $\LL_\nu\big(\delta^{-1}; z\big)$ is entire of exponential type $2\pi$. From condition \eqref{maj-nu-growth} we have that $\lambda \mapsto (1 - e^{-\lambda})$ is $\nu$-integrable and thus  $\MM_\nu\big(\delta^{-1}; z\big)$ itself is entire of exponential type $2\pi$. To prove that 
\begin{equation*}
\MM_{\nu}\big(\delta^{-1};x\big) - \TT_{\nu}\big(\delta^{-1};x\big)  \geq 0,
\end{equation*}
we proceed in the same way as in \eqref{CL_pf0_2},  \eqref{CL_pf2_2} and  \eqref{CL_pf3_2}, by taking $0 < \beta < \min\{1,\delta\}$. Also, analogously to \eqref{CL_pf6_2} we find
\begin{align*}
\begin{split}
\int_{-\infty}^{\infty}  \big\{ \MM_{\nu}&\big(\delta^{-1};x\big)  -\TT_{\nu}\big(\delta^{-1};x\big)\big\}\, \dx \\
&  =  \int_{-\infty}^{\infty}\int_0^\infty  \big\{\M_{\lambda}\big(\delta^{-1};x\big)  - \T_{\lambda}\big(\delta^{-1};x\big)\big\} \dnu\,  \dx \\
&  = \int_0^\infty\left(\frac{1}{1-e^{-\delta^{-1}\lambda}} -\frac{\delta}{\lambda} -\frac{e^{-\lambda}}{2}\right)\,\dnu.
\end{split}
\end{align*}
The optimality and uniqueness are similar to the proof of Theorem  \ref{tr-majorant-thm}.  This concludes the proof of Theorem \ref{maj-nu-thm}.


\begin{thebibliography}{99}

\bibitem{BMV}
J.~T.~Barton, H.~L.~Montgomery, and J.~D.~Vaaler,
\newblock Note on a diophantine inequality in several variables,
\newblock Proc. Amer. Math. Soc. 129 (2001), 337--345.

\bibitem{CC}
E. Carneiro and V. Chandee,
\newblock Bounding $\zeta(s)$ in the critical strip, 
\newblock J. Number Theory 131 (2011), 363--384. 

\bibitem{CCM}
E. Carneiro, V. Chandee and M. Milinovich,
\newblock Bounding $S(t)$ and $S_1(t)$ on the Riemann hypothesis,
\newblock Math. Ann. (to appear) DOI 10.1007/s00208-012-0876-z.

\bibitem{CL}
E. Carneiro and F. Littmann,
\newblock Bandlimited approximations to the truncated Gaussian and applications,
\newblock Constr. Approx. (to appear) DOI 10.1007/s00365-012-9177-8.

\bibitem{CLV}
E.\ Carneiro, F.\ Littmann, and J. D.\ Vaaler,
\newblock Gaussian subordination for the Beurling-Selberg extremal problem,
\newblock Trans.\ Amer.\ Math.\ Soc. (to appear) DOI: http://dx.doi.org/10.1090/S0002-9947-2013-05716-9.

\bibitem{CV2}
E.~Carneiro and J.~D.~Vaaler,
\newblock Some extremal functions in Fourier analysis, II,
\newblock Trans. Amer. Math. Soc. 362 (2010), 5803-5843.

\bibitem{CV3}
E.~Carneiro and J.~D.~Vaaler,
\newblock Some extremal functions in Fourier analysis, III,
\newblock Constr. Approx. 31, No. 2 (2010), 259--288.

\bibitem{CS}
V. Chandee and K. Soundararajan,
\newblock Bounding $|\zeta(\tfrac{1}{2} + it)|$ on the Riemann hypothesis,
\newblock Bull. London Math. Soc. 43(2) (2011), 243--250.

\bibitem{Ga}
P. X. Gallagher, 
\newblock Pair correlation of zeros of the zeta function, 
\newblock J. Reine Angew. Math. 362 (1985), 72--86.

\bibitem{Gan} M. I. Ganzburg,
\newblock $L$-approximation to non-periodic functions,
\newblock Journal of concrete and applicable mathematics 8 (2010), no. 2, 208--215.

\bibitem{GG}
D. A. Goldston and S. M. Gonek,
\newblock A note on $S(t)$ and the zeros of the Riemann zeta-function,
\newblock Bull. London Math. Soc. 39 (2007), 482--486.

\bibitem{GV}
S.~W.~Graham and J.~D.~Vaaler,
\newblock A class of extremal functions for the Fourier transform,
\newblock Trans. Amer. Math. Soc. 265 (1981), 283--382.

\bibitem{HZ} U. Haagerup and L. Zsid\'o,
\newblock Resolvent estimate for Hermitian operators and a related minimal extrapolation problem, 
\newblock Acta Sci. Math. (Szeged) 59 (1994), 503--524.

\bibitem{HV} 
J.~Holt and J.~D.~Vaaler,
\newblock The Beurling-Selberg extremal functions for a ball in the Euclidean space,
\newblock Duke Math. Journal 83 (1996), 203--247.

\bibitem{L1} 
F.~Littmann,
\newblock Entire approximations to the truncated powers,
\newblock Constr. Approx. 22 (2005), no. 2, 273--295.

\bibitem{L3}
F.~Littmann,
\newblock Entire majorants via Euler-Maclaurin summation,
\newblock Trans. Amer. Math. Soc. 358 (2006), no. 7, 2821--2836.

\bibitem{L4}
F. Littmann,
\newblock Quadrature and extremal bandlimited functions, 
\newblock SIAM J. Math. Anal. (to appear).

\bibitem{K}
M. G. Krein, 
\newblock On the best approximation of continuous differentiable functions on the whole real axis,
\newblock Dokl. Akad. Nauk SSSR 18 (1938), 615-624 (Russian).

\bibitem{Na}
B.\ Sz.-Nagy,
\newblock \"Uber gewisse Extremalfragen bei transformierten trigonometrischen Entwicklungen II,
\newblock Ber. Math.-Phys. Kl. S\"achs. Akad. Wiss. Leipzig 91, 1939.

\bibitem{PP}
M.~Plancherel and G.~Polya,
\newblock Fonctions enti\'eres et int\'egrales de Fourier multiples (Seconde partie),
\newblock Comment. Math. Helv. 10, (1938), 110--163.

\bibitem{S2}
A.~Selberg,
\newblock Lectures on Sieves, {\em Atle Selberg: Collected Papers}, Vol. II,
\newblock Springer-Verlag, Berlin, 1991, pp. 65--247.

\bibitem{V}
J.~D.~Vaaler,
\newblock Some extremal functions in Fourier analysis,
\newblock Bull. Amer. Math. Soc. 12 (1985), 183--215.

\bibitem{Z} A.\ Zygmund
\newblock Trigonometric series, Vol.\ II,
\newblock Cambridge Univ.\ Press, 1968.


\end{thebibliography}
\end{document}